\newcommand{\Z}{\mathbb{Z}}
\newcommand{\R}{\mathbb{R}}
\newcommand{\C}{\mathbb{C}}
\newcommand*{\ra}{\rightarrow}
\newtheoremstyle{example}{\topsep}{\topsep}%
     {\itshape}
     {}
     {\bfseries}
     {.}
     {\newline}
     {\thmname{#1}\thmnumber{ #2}\thmnote{ #3}}
\theoremstyle{example}
\newtheorem{theorem}{Theorem}[section]
\newtheorem{proposition}[theorem]{Proposition}
\newtheorem{lemma}[theorem]{Lemma}
\newtheorem{definition}[theorem]{Definition}
\newtheoremstyle{remark}{\topsep}{\topsep}%
     {}
     {}
     {\bfseries}
     {.}
     {\newline}
     {\thmname{#1}\thmnumber{ #2}\thmnote{ #3}}
\theoremstyle{remark}
\newtheorem{remark}[theorem]{Remark}
\newtheorem{example}[theorem]{Example}
\newtheorem{corollary}[theorem]{Corollary}
\begin{document}

\title[Recovering quantum graphs from their Bloch spectrum]{Recovering quantum graphs from their Bloch spectrum}

\author{Ralf Rueckriemen}

\address{Dartmouth College,
Kemeny Hall,
Hanover, 03755 NH, USA}
\ead{ralf.rueckriemen@dartmouth.edu}
\begin{abstract}
 We define the Bloch spectrum of a quantum graph to be the collection of the spectra of a family of Schr\"odinger operators 
parametrized by the cohomology of the quantum graph. We show that the 
Bloch spectrum determines the Albanese torus, the block structure and the planarity of the graph. It determines a geometric dual of a 
planar graph. This enables us to show that the Bloch spectrum completely determines planar $3$-connected quantum graphs.
\end{abstract}

\vspace{8cm}
I would like to thank my supervisor Carolyn Gordon for her extensive support. Without our frequent meetings and her numerous helpful and 
necessary suggestions this paper could not have been written.

\maketitle

\section{introduction}

We consider finite combinatorial graphs in which each edge is equipped with a positive finite length, often called metric graphs. 
We define a Schr\"odinger operator on a metric graph. The pair of a metric graph together with 
a Schr\"odinger operator is called a quantum graph.

Quantum graphs are studied in mathematics and physics. They serve as simplified models in many settings involving 
wave propagation. The fact that they are essentially one-dimensional makes explicit computations possible in various situations. On the other hand 
the graph structure gives them enough complexity to be useful models. The papers \cite{Kuchment08} and \cite{BolteEndres08} provide an 
introduction and a survey of quantum graphs and the trace formulae that are often used to study them. 

A function on a quantum graph consists of a function on each edge, where the edges are viewed as intervals.  A Schr\"odinger operator acts 
on the space of all functions that are smooth on each edge and satisfy specified conditions at the vertices.   We will impose the Kirchoff 
vertex conditions, which require that the function be continuous and that the sum of the inward pointing derivatives on all edges incident 
at the vertex be zero.  Kirchoff conditions model a conservation of flow.   A Schr\"odinger operator is a second order operator on each 
edge with leading term the standard Laplacian $-\left(\frac{\partial}{\partial x}\right)^2$. The first order part is called the magnetic 
potential.

The question to what degree the spectrum of the Laplace operator determines the underlying space was popularized my Kac in \cite{Kac66} 
in the manifold setting. The Schr\"odinger operator with zero magnetic potential is the standard Laplacian 
on a quantum graph. The relation between its spectrum or more generally the spectrum of a Schr\"odinger operator and the underlying graph 
 is an active area of research. Various exact trace formulae relate the two, see for example \cite{Roth83},\cite{KottosSmilansky99} or 
\cite{BolteEndres08}. 

A quantum graph is determined by the 
spectrum of a single Schr\"odinger operator,  if some genericity assumptions on the edge lengths in the 
quantum graph are made, \cite{GutkinSmilansky01}. 
This is not true without a genericity assumption on the edge lengths. Various examples of isospectral 
non-isomorphic quantum graphs exist, see for example \cite{vonBelow99}, \cite{GutkinSmilansky01} or \cite{BPB09}. The last also proves a 
generalization of Sunada's theorem to construct isospectral quantum graphs. 

The idea of this paper is to look at the spectra of an entire 
collection of Schr\"odinger operators, which we call the Bloch spectrum of a quantum graph. 

The classical Bloch spectrum of a torus $\R^n/L$ assigns to each character $\chi:L\ra \C^*$ the spectrum of the Euclidean Laplacian acting on the 
space of functions on $\R^n$ that satisfy $f(x+l)=\chi(l)f(x)$ for all $l\in L$.   See, for example, \cite{ERT84} for inverse spectral results 
concerning the Bloch spectrum.  As pointed out by Guillemin \cite{Guillemin90}, the Bloch spectrum can also be interpreted as the collection of 
spectra of all operators $\nabla^*\nabla$ acting on $C^\infty(\R^n/L)$, where $\nabla$ is a connection with zero curvature. The set of these 
connections is given by 
$\nabla=(d+i\alpha)$ with $\alpha$ a harmonic 1-form on $\R^n/L$. 
(One may take $\alpha$ to be any closed 1-form, but the spectrum depends only on the cohomology class of $\alpha$, so one may always assume 
$\alpha$ to be harmonic.) The correspondence with the classical notion is given by the association of the character 
$\chi(l)=e^{2\pi i\alpha(l)}$ to the harmonic form $\alpha$, where now $\alpha$ is viewed as a linear functional on $\R^n$.  The operators 
$\nabla =(d+i\alpha)$ with $\alpha$ closed may be viewed as the collection of all flat connections on the trivial Hermitian line bundle 
over the torus $\R^n/L$.   This interpretation of the Bloch spectrum admits a generalization to arbitrary Hermitian line bundles over 
a torus, where now one considers all connections with, say, harmonic curvature, see \cite{GGKW08}. 

Both interpretations of the Bloch spectrum can be carried over to quantum graphs. We use differential forms to define our operators, 
our approach is similar to the one in  \cite{Post09}.  We will consider operators of the form 
$\Delta_{\alpha}=(d+2\pi i\alpha)^*(d+2\pi i\alpha)$ and vary the $1$-form $\alpha$. Similarly to the setting of flat tori the spectrum 
depends only on the equivalence class of $\alpha$ in $H_{dR}^1(G,\R) \slash H_{dR}^1(G,\Z)$.
We can also define the Bloch spectrum using characters of the first fundamental group.
We show that these two notions of the Bloch spectrum of a quantum graph are equivalent.

We want to see what information about the quantum graph can be retrieved from the Bloch spectrum without any genericity assumptions 
on the quantum graph. 
It is known (see section seven for details) that the spectrum of the standard Laplacian $\Delta_0$ determines the dimension $n$ of 
$H^1(G,\R)$.  Thus from that spectrum alone, we know that $H^1(G,\R)/H^1(G,\Z)$ is isomorphic as a torus (i.e., as a Lie group) 
to $\R^n/ \Z^n$. 
Hence we can view the Bloch spectrum as a map that associates a spectrum to each $\alpha\in \R^n/ \Z^n$. 
We ask the following question:

\itshape
Suppose we are given a map that assigns a spectrum to each element $\alpha$ of $\R^n / \Z^n$ and we know these spectra form the Bloch 
spectrum of a finite quantum graph $G$. From this information, can one reconstruct $G$ both 
combinatorially and metrically?
\upshape

We will consider a generic $\alpha$, i.e., one whose orbit is dense in the torus $\R^n/\Z^n$, 
and we will just consider the spectra associated to an interval in the orbit of $\alpha \in \R^n/\Z^n$.

Our main results are as follows. 
\begin{theorem}
 The Bloch spectrum determines the Albanese torus, $Alb(G)=H_1(G,\R) \slash H_1(G,\Z)$, of a quantum graph as a Riemannian manifold. 
\end{theorem}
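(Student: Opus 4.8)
The plan is to recover from the Bloch spectrum the full family of eigenvalue functions $\beta\mapsto\sigma(\Delta_\beta)$ on the torus $T=H_{dR}^1(G,\R)/H_{dR}^1(G,\Z)\cong\R^n/\Z^n$, and then to read the Albanese metric off the lowest band near the trivial class. Recall that Hodge theory on the graph represents $H_{dR}^1(G,\R)$ by harmonic $1$-forms carrying the $L^2$ pairing $\langle\alpha,\beta\rangle=\sum_e\int_e\alpha\beta$, and that this pairing is dual to the quadratic form defining the Albanese metric on $H_1(G,\R)$. Since $\sigma(\Delta_0)$ already fixes $n$, proving the theorem reduces to recovering this pairing, together with the lattice, up to isometry.

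\emph{Step 1: recovering the secular function on all of $T$.} First I would argue that the data we are given -- the spectra $\sigma(\Delta_{t\alpha})$ for $t$ in an interval and $\alpha$ a generic direction -- determine the secular function $F(k,\beta)$ for every $(k,\beta)$. One uses that, for fixed $k$, $F(k,\cdot)$ is a trigonometric polynomial on $T$ whose frequencies lie in a finite subset of $H_1(G,\Z)$: each monomial of the determinant $\det\bigl(I-e^{ikL}S_\beta\bigr)$ representing $F$ is a product of bond factors $e^{ik\ell_e+2\pi i\int_e\beta}$ around a closed loop of the bond graph, and the total flux of such a loop is an integral period $\langle\beta,h\rangle$. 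Because $\alpha$ is generic the distinct frequencies $h$ restrict to distinct real exponents $t\langle\alpha,h\rangle$, so $t\mapsto F(k,t\alpha)$ is a genuine finite exponential sum, hence determined by its restriction to the interval; and the branches $t\mapsto k_j(t\alpha)$ are real-analytic (Kato--Rellich), so the spectrum-as-a-set on the interval lets one follow and disentangle them through their discrete crossings and, using the normal form that reconstructs $F$ from its zero set with multiplicities, recover $F(k,t\alpha)$ for all $t$. Reading off the coefficients $c_h(k)$ then gives $F(k,\beta)$, hence $\sigma(\Delta_\beta)$, for every $\beta\in T$. (The same Fourier analysis in $\beta$, applied to a trace formula, additionally returns the length spectrum of closed orbits sorted by homology class; this is an alternative route that I would keep in reserve.)

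\emph{Step 2: the lowest band and its Hessian.} Let $\lambda_0(\beta)=\min\sigma(\Delta_\beta)$, now known on all of $T$. Since $G$ is connected, $\lambda_0(0)=0$ is a simple eigenvalue with constant eigenfunction, so $\lambda_0$ is real-analytic near $0$, is non-negative, and vanishes precisely on the lattice (where the flat connection $d+2\pi i\beta$ has trivial holonomy). Writing a trial function as $f=1+g$ and minimising $\|(d+2\pi i\beta)f\|^2/\|f\|^2$ to second order in $\beta$ -- the optimal $g$ being $-2\pi i\phi$ for $\beta=\beta_{\mathrm{harm}}+d\phi$ the Hodge decomposition, an imaginary correction that leaves $\|f\|^2$ unchanged to first order -- one finds $\lambda_0(\beta)=\tfrac{4\pi^2}{L_{\mathrm{tot}}}\,\|\beta_{\mathrm{harm}}\|_{L^2}^2+O(\|\beta\|^4)$, with $L_{\mathrm{tot}}=\sum_e\ell_e$ itself determined by the Weyl asymptotics of $\sigma(\Delta_0)$. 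Thus the Hessian of $\lambda_0$ at $0$ is a known constant times the Hodge $L^2$ form; recording it, together with the lattice obtained as the zero set of $\lambda_0$, yields the $L^2$ inner product on $H_{dR}^1(G,\R)$ and the lattice $H_{dR}^1(G,\Z)$. Dualising gives $H_1(G,\R)$ with its Albanese quadratic form and the lattice $H_1(G,\Z)$, i.e.\ $Alb(G)$ as a Riemannian manifold.

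\emph{Main obstacle.} The heart of the matter is the second-order perturbation computation in Step 2: one must show that the Hessian of the lowest band at the trivial class is \emph{exactly} the Hodge-theoretic $L^2$ form, with the correct constant, verifying in particular that the minimiser $g$ solves the right Poisson problem with Kirchhoff conditions and that the $\beta\mapsto-\beta$ symmetry kills the cubic term so no anomalous contribution survives. The fussier but more routine difficulty is in Step 1: pinning down the genericity conditions on $\alpha$ (it must avoid the finitely many rational hyperplanes on which two frequencies of $F$ coincide, and the interval must be in sufficiently general position to disentangle the analytic branches of the secular zeros) and invoking the precise normal form of the quantum-graph secular function needed to recover it from its zero set. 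Once these are in hand the remainder is Fourier analysis on $T$ and linear algebra.
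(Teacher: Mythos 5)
Your proof takes a genuinely different route from the paper's. The paper never reconstructs the full band structure: it extracts from the wave trace along the generic ray the set of ``frequencies'' $2\pi\int_p\alpha$ (which realize $H_1(G,\Z)$ as a subgroup of $\R$) together with the length $l(\mu)$ of the minimal periodic orbit in each homology class, chooses a generating set whose minimal representatives are cycles, and then builds the Gram matrix of the Albanese metric directly on $H_1$ via the polarization-type identity $2\langle v_i,v_j\rangle=l(\mu_i+\mu_j)-l(|\mu_i-\mu_j|)$, which computes the signed overlap length of two cycles. You instead reconstruct $\sigma(\Delta_\beta)$ for \emph{all} $\beta\in T$ and read the dual ($L^2$/Hodge) metric on $H^1$ off the Hessian of the ground-state band at the trivial class, then dualize. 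Your Step 2 is correct: the expansion $\lambda_0(\beta)=\tfrac{4\pi^2}{L_{\mathrm{tot}}}\|\beta_{\mathrm{harm}}\|_{L^2}^2+O(\|\beta\|^4)$ checks out (e.g.\ on a single loop of length $L$ one gets $\lambda_0(c\,dx)=4\pi^2c^2$, matching $\tfrac{4\pi^2}{L}\cdot c^2L$), the first-order correction to the eigenfunction is purely imaginary so the normalization is unaffected to the relevant order, and the pair (lattice $=$ zero set of the lifted $\lambda_0$, Hessian) determines $Jac(G)$ and hence $Alb(G)$. What the paper's route buys is that the minimal-length data and overlap signs are exactly what its later theorems (block structure, planarity, duals) consume; what your route buys is a cleaner conceptual parallel with Bloch theory on flat tori and with Kotani--Sunada, at the cost of needing the whole torus rather than just the leading singularities of the wave trace.

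The one step you should not leave as stated is the recovery of the secular function $F(k,t\alpha)$ from the spectrum: the spectrum only gives the zero set of $F(\cdot,t\alpha)$ in $k$, which determines $F$ only up to a nonvanishing factor, and the ``normal form'' you invoke to fix the normalization implicitly uses the edge lengths, which are not yet known -- so this route is circular as written. The fix is precisely the alternative you keep ``in reserve'': the distribution $\sum_n e^{-il\lambda_n}$ is computable from the eigenvalues alone, its coefficients $\mathcal{A}^l(t)$ at each length $l$ are finite exponential sums in $t$ with frequencies $2\pi\int_p\alpha$ (distinct for distinct homology classes by genericity), hence are determined by the interval and extend to all $t$ and, after identifying each frequency with its lattice vector, to all $\beta$. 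This recovers the full Bloch spectrum and makes your Step 2 go through; it is also exactly the paper's starting point.
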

The Albanese torus contains information about the cycles in the graph and how they overlap. Note that the spectrum of a 
single Schr\"odinger operator does not determine the  
Albanese torus, there are examples of isospectral graphs with different Albanese tori, \cite{vonBelow99}. If the quantum graph is 
equilateral the Albanese torus also determines the complexity of the graph by a theorem in \cite{KotaniSunada00}.

The block structure of a graph contains the  broad structure of the graph, see definition \ref{define_block} for the 
definition of \emph{block structure}. 
\begin{theorem}
The Bloch spectrum determines the block structure of a quantum graph. 
\end{theorem}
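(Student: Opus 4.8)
The plan is to mine the Bloch spectrum for more than the Albanese metric supplied by Theorem 1.1, namely the closed orbits of $G$ organised by homology class. Applying the trace formula to the family $\Delta_\alpha$ and Fourier transforming in the length variable recovers the length spectrum, with each closed orbit weighted by its amplitude, a product of Kirchhoff scattering coefficients along the visited vertices; transforming along the orbit of a generic $\alpha$ in the torus $H^1(G,\R)/H^1(G,\Z)$ then splits this according to the value of $\langle\alpha,\gamma\rangle$, which by genericity is injective on $\gamma\in H_1(G,\Z)$. So for every $\gamma\in H_1(G,\Z)$ one obtains the multiset of lengths of closed walks on $G$ of homology class $\gamma$, each weighted by its amplitude; I would take this homology-tagged length spectrum, with amplitudes, as the working datum. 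The structural fact behind everything is that every cycle of $G$ lies within a single block (a cycle has no cut vertex), so that $H_1(G,\Z)=\bigoplus_i H_1(B_i,\Z)$ as an orthogonal direct sum over the blocks $B_i$ -- orthogonal for the Albanese inner product, since cycles in different blocks share no edges -- and the $B_i$ are strung along the block--cut tree of $G$.

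The first step is to recover the decomposition of $H_1(G,\Z)$ into its block-summands. On the set of cycles of $G$ let linkage be the equivalence relation generated by ``sharing an edge''; because a block is $2$-connected its cycles form one linkage class, while cycles of distinct blocks are never linked, so the linkage classes are exactly the blocks carrying a cycle. The task is to detect this relation from the tagged datum. The key point is that when $\gamma\in H_1(B,\Z)$ and $\gamma'\in H_1(B',\Z)$ with $B\neq B'$, every closed walk of class $\gamma+\gamma'$ must traverse the cut vertices on the block--cut-tree path from $B$ to $B'$, and there it decomposes into a part of class $\gamma$ in the $B$-branch and a part of class $\gamma'$ in the $B'$-branch joined by an out-and-back excursion; hence the shortest such orbit has length $\ell(\gamma)+\ell(\gamma')+2d$ with $d\ge 0$ the tree distance, and its amplitude factors through the degree and backscattering data at those cut vertices, whereas within a single block no such global factorisation is forced. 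Reading linkage off in this way and taking its transitive closure on a basis of $H_1(G,\Z)$ yields $H_1(G,\Z)=\bigoplus_i H_1(B_i,\Z)$, hence the number of nontrivial blocks and the first Betti number of each; restricting the tagged datum to each summand $H_1(B_i,\Z)$ then recovers further invariants of $B_i$, such as its own Albanese torus, up to the contractible excursions of orbits into subtrees attached at the cut vertices of $B_i$.

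The second step is to reassemble the tree on which the blocks sit. The excess lengths $\ell(\gamma+\gamma')-\ell(\gamma)-\ell(\gamma')$, minimised over representatives, give twice the pairwise distances between the cyclic blocks in the metric graph, and a finite tree with positive edge weights is determined by its mutual distances, so these pin down the metric tree joining the blocks, including the bridge edges lying between them. The class-zero part of the datum records the contractible closed orbits -- in particular the out-and-back orbits along the pendant subtrees and through the bridges -- and, together with Weyl's law, which gives the total length $\sum_e\ell_e$, this accounts for the remaining tree part. Collating the blocks, the bridges, the pendant subtrees and the cut vertices at which they attach produces the block structure of $G$ in the sense of Definition \ref{define_block}.

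I expect the crux to be the linkage criterion of the first step. Superadditivity of $\ell$ is not by itself decisive: a single $2$-connected block can contain edge-disjoint, vertex-disjoint cycles $\gamma,\gamma'$ with $\ell(\gamma+\gamma')>\ell(\gamma)+\ell(\gamma')$, a configuration indistinguishable on length grounds alone from two blocks glued at a cut vertex, so the argument must genuinely exploit the amplitudes -- the sequence of vertex degrees and backscatterings along minimal orbits -- to locate cut vertices and to certify $2$-connectivity of each putative block. One must also dispose of the degenerate configurations (blocks meeting directly at a common cut vertex, classes with no simple-cycle representative, multiply-wound orbits) and control cancellations in the trace formula between distinct orbits of equal length and homology, since genericity of $\alpha$ separates homology classes but not lengths within a fixed class; here one leans on the amplitude bookkeeping, or on an openness argument in the edge lengths, to see through such coincidences.
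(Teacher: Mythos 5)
Your overall strategy---extract homology-tagged minimal lengths from the trace formula, group a cycle basis into blocks via an edge-sharing relation, then rebuild the tree of blocks from pairwise distances---is the paper's strategy, but the two steps where the real work lies both have gaps. First, the linkage step. You correctly observe that superadditivity of the length function cannot separate edge-disjoint cycles inside one $2$-connected block from cycles in two different blocks, but you then defer to ``amplitude bookkeeping,'' which you do not carry out and which is genuinely delicate: distinct orbits of equal length and equal homology class have their amplitudes \emph{summed} in the trace formula and can cancel, as the paper itself remarks. The resolution is simpler and purely metric. The \emph{generating} relation ``the two minimal cycles share an edge'' is decidable from lengths alone, since $c_i$ and $c_j$ share an edge if and only if $\min\bigl(l(\mu_i+\mu_j),\,l(|\mu_i-\mu_j|)\bigr)<l(\mu_i)+l(\mu_j)$ (remark \ref{check_overlap_sign}); one then takes the transitive closure over a basis of cycles. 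Edge-disjoint cycles in the same block never need to be compared directly, because in a $2$-connected block any cycle basis is connected under edge-sharing, so the transitive closure merges them automatically, while no chain of edge-sharing cycles can cross a cut vertex. You mention the transitive closure but do not use it to discharge your own objection; as written, your test for the generating relation is exactly the undecidable comparison.

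Second, the tree reconstruction. You invoke ``a finite tree with positive edge weights is determined by its mutual distances,'' but blocks are not points: if a block $B_1$ has two cut vertices $u,v$ at positive distance inside $B_1$, with branches containing $B_2$ and $B_3$ attached at $u$ and $v$ respectively, then $d(B_2,B_3)>d(B_2,B_1)+d(B_1,B_3)$, so the inter-block distances are not a tree metric and the standard reconstruction does not apply. The paper must exploit precisely these failures of the triangle inequality to identify which blocks are internal, split the structure at them, and only then apply lemma \ref{tree_from_leaves} to pieces in which every block is a leaf. Finally, your claim to recover the pendant subtrees from the homologically trivial part of the length spectrum plus Weyl's law is unsupported: contractible orbits carry no flux, tree parts are in general not determined by such data (there are isospectral trees), and this is exactly why theorem \ref{block_structure} is stated for leafless graphs. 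The skeleton of your argument is right, but the two places you flag as ``the crux'' are resolved in the paper by combinatorial lemmas you have not supplied, not by the amplitude or Weyl-law arguments you propose.
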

The cycle space of a graph is closely related to its homology, we use some of its properties to show:
\begin{theorem}
The Bloch spectrum determines whether or not a graph is planar. 
\end{theorem}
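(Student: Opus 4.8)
\emph{Proof sketch.} The plan is to reduce planarity to a property of the cycle space that is visible in the data already recovered, and then to apply MacLane's planarity criterion: a finite graph $G$ is planar if and only if its cycle space $Z(G)$, regarded as a subspace of $(\Z/2\Z)^{E(G)}$, has a \emph{$2$-basis}, i.e.\ a basis $C_1,\dots,C_n$ (with $n=\dim Z(G)$) in which each edge of $G$ lies in at most two of the $C_i$. This property is invariant under relabelling the edges and is unaffected by bridges, so it depends only on the cycle matroid $M(G)$; moreover a graph is planar iff each of its blocks is, and the theorem on the block structure lets us isolate the blocks, so we may assume $G$ is $2$-connected. Thus it suffices to prove the following key intermediate claim: the Bloch spectrum determines the cycle matroid $M(G)$, equivalently $Z(G)\subseteq(\Z/2\Z)^{E}$ up to permutation of coordinates.

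To establish that claim I would combine the Albanese torus with the length information carried by the Bloch spectrum. The trace formula for $\Delta_\alpha$ expresses its wave trace as a sum over closed paths $p$ in $G$, each term carrying the combinatorial length $\ell(p)$ and a phase $e^{2\pi i\langle\alpha,[p]\rangle}$ that depends on $\alpha$ only through the homology class $[p]\in H_1(G,\Z)$. Evaluating along a dense orbit of a generic $\alpha$ and performing a Fourier analysis in $\alpha$ separates these contributions by homology class, so that for each $h\in H_1(G,\Z)$ we obtain the lengths (with amplitudes) of the closed paths lying in class $h$. The theorem on the Albanese torus supplies the lattice $H_1(G,\Z)$ with its Albanese inner product; under that inner product a simple cycle $\gamma$ satisfies $\|[\gamma]\|^2=\sum_{e\in\gamma}\ell(e)=\ell(\gamma)$, and comparing this with the shortest orbit length in each class lets us recognise the classes of simple cycles together with their lengths. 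Finally, edge-disjoint cycles have additive combinatorial length while cycles sharing a path do not, which lets us detect overlaps between simple cycles and assemble all of them into a single incidence table over a common abstract edge set; that table is exactly $Z(G)\subseteq(\Z/2\Z)^{E}$ up to coordinate permutation.

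Granting $M(G)$, the theorem follows at once: $G$ is planar iff $Z(G)$ has a $2$-basis, a condition one checks directly on the recovered subspace, and the block reduction then transfers the conclusion to the original $G$.

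The crux — and the place where the real work lies — is the recovery of $M(G)$ without any genericity assumption on the edge lengths. When distinct edges or distinct cycles happen to have equal length, the length of a cycle no longer determines its edge set, so one cannot simply read off incidences. What one must show is that the combination of (i) the Albanese lattice $H_1(G,\Z)$ with its metric, (ii) the identification of the simple-cycle classes, and (iii) the full length spectrum in every homology class still pins down the edge--cycle incidences: in particular that ``two simple cycles share an edge'' is a spectrally detectable relation and that the incidences extracted in this way are mutually consistent. Once this disentanglement is carried out, MacLane's criterion together with the already established recovery of the block structure completes the proof.
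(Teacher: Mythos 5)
Your reduction to MacLane's criterion is the right idea, and it is also the engine of the paper's proof, but you route it through a much stronger intermediate claim --- that the Bloch spectrum determines the cycle matroid $M(G)$, i.e.\ the cycle space $Z(G)\subseteq(\Z/2\Z)^{E}$ up to coordinate permutation --- and you explicitly leave that claim unproved (``the crux --- and the place where the real work lies''). This is a genuine gap, and it is not one you can expect to close with the available data. What the Bloch spectrum yields is the function $h\mapsto l(h)$ on $H_1(G,\Z)$ (length of a minimal periodic orbit in each homology class) and hence the Albanese inner product; edge--cycle incidence is \emph{not} determined by this. The remark following Lemma \ref{decomposition} is exactly the obstruction: in $K_{3,3}$ there are two cycles sharing three edges, with no positive overlap, whose union is homologous to a single cycle, so the shared edges cancel homologically and leave no trace in the length function. (Indeed, Lemma \ref{decomposition}, which counts shared edges, is proved only \emph{assuming} planarity.) A secondary problem: your proposed test for recognising simple cycles, $\|h\|^2=l(h)$ in the Albanese metric, is also satisfied by a figure-eight class $[\gamma_1]+[\gamma_2]$ with $\gamma_1,\gamma_2$ edge-disjoint cycles meeting at a vertex; the paper's Lemma \ref{recognize_cycle} uses a decomposition criterion on frequencies precisely to exclude such classes.

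The paper sidesteps matroid recovery entirely by using the oriented consequence of MacLane's theorem, Corollary \ref{simple=planar}: $G$ is planar iff it admits a homology basis of \emph{oriented} cycles no two of which have an edge of positive overlap. Unlike ``sharing an edge,'' the relation ``having an edge of positive overlap'' \emph{is} spectrally detectable for oriented cycles $\gamma_i,\gamma_j$, since positively overlapping edges cancel in the difference class and strictly shorten $l(|\gamma_i-\gamma_j|)$ below $l(|\gamma_i|)+l(|\gamma_j|)$ (Remark \ref{check_overlap_sign}). Because a finite graph has only finitely many cycles, there are finitely many homology bases consisting of cycles and finitely many orientation choices for each, so one can test every candidate basis for the non-positive-overlap condition and conclude by Corollary \ref{simple=planar} --- without ever knowing which edges any two cycles actually share. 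To repair your argument, replace ``recover $M(G)$ and check for a $2$-basis'' by ``detect positive overlap between oriented basis cycles and check for a non-positive basis''; the block reduction you invoke at the start is fine and matches the paper.
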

Planarity is not determined by the spectrum of a single Schr\"odinger operator, \cite{vonBelow99}.
The information about the 
homology we read out from the Bloch spectrum allows us to construct a geometric dual of a planar quantum graph. We use it to show:
\begin{theorem}
Planar $3$-connected graphs are completely determined by their Bloch spectrum.
 \end{theorem}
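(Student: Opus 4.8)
The plan is to deduce the theorem from two facts established earlier in the paper---that the Bloch spectrum detects planarity, and that from the homological information in the Bloch spectrum one can construct a geometric dual of a planar quantum graph---together with Whitney's classical theorem that a $3$-connected planar graph has, up to homeomorphism, a unique embedding in the $2$-sphere. Fix a planar $3$-connected quantum graph $G$, and let $H$ be any finite connected quantum graph with the same Bloch spectrum; the goal is to show $H\cong G$ as metric graphs. By the planarity theorem $H$ is planar as well. (Consistently with this, the theorem on the block structure forces $H$ to have the same block structure as $G$, and since a $3$-connected graph forms a single block, $H$ is in fact $2$-connected.)

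First I would feed the common Bloch spectrum into the geometric-dual construction. Being a function of the Bloch spectrum, it returns a single metric graph, call it $D$, and by the preceding result $D$ is simultaneously a geometric dual of $G$ and a geometric dual of $H$; in particular $E(D)$ is in canonical bijection with $E(G)$ and with $E(H)$, and the edge lengths of $D$ are determined by the Bloch spectrum alone. Since $G$ is $3$-connected and planar, its embedding in $S^{2}$ is unique up to homeomorphism by Whitney's theorem, so ``the'' geometric dual of $G$ is well defined up to isomorphism and $D\cong G^{*}$; moreover the geometric dual of a $3$-connected planar graph is again $3$-connected and planar, so $D$ is $3$-connected and planar.

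Next I would dualize back. Because $G$ is connected and planar and $D\cong G^{*}$, the double-dual identity gives $D^{*}\cong(G^{*})^{*}\cong G$. Because $H$ too is connected and planar and $D$ is a geometric dual of $H$, i.e.\ $D\cong H^{*}$ for some embedding of $H$, the same identity gives $D^{*}\cong(H^{*})^{*}\cong H$. Hence $H\cong G$ as combinatorial graphs. For the metric statement one uses that the edges of a connected planar graph and of its geometric dual are in canonical bijection, that the natural length convention assigns each dual edge the length of the edge it crosses, and that the composite bijection $E(G)\leftrightarrow E(D)\leftrightarrow E(D^{*})$ is exactly the one realizing $D^{*}\cong G$; it follows that this isomorphism preserves edge lengths, so $H$ and $G$ are isomorphic as metric graphs. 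This proves the theorem.

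I expect the substantive work to lie in the earlier theorem---that the geometric dual, together with its metric, is reconstructible from the Bloch spectrum---after which the present deduction is short. The points that still require care are: (i) verifying that the metric data of $D$ transfer back to $G$ correctly under double dualization, i.e.\ that the canonical edge identifications and the chosen length convention for dual edges are compatible with the isomorphism $D^{*}\cong G$ in the category of metric graphs; (ii) treating degenerate configurations---loops, parallel edges, graphs too small to be genuinely $3$-connected, or duals that are multigraphs---where Whitney's uniqueness theorem and the double-dual identity must be applied with the correct conventions; and (iii) checking that the geometric-dual construction is genuinely a function of the Bloch spectrum, which is what lets one pass from ``$G$ and $H$ have the same Bloch spectrum'' to ``the construction returns the same metric graph $D$ for both''. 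I would expect (i) to be the main obstacle to a fully rigorous argument.
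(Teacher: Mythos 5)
Your combinatorial argument follows the paper's route: use the planarity and block-structure theorems to see that any Bloch-isospectral $H$ is planar and $2$-connected, build a geometric dual $D$ from the spectral data, and invoke uniqueness of the dual of a $3$-connected planar graph to recover $G$ (and $H$) as the dual of $D$. One small caveat the paper itself records: the dual produced from the spectrum depends on the chosen non-positive basis of the homology, so it is not literally a function of the Bloch spectrum; for $3$-connected graphs this is harmless because all abstract duals are isomorphic, but it is worth saying explicitly rather than asserting the construction ``returns a single graph.''

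The genuine gap is in your metric step, exactly at the point you flagged as obstacle (i) --- but the problem is worse than a compatibility check. The dual $D$ constructed from the Bloch spectrum is a purely combinatorial object: the decomposition lemma (lemma \ref{decomposition}) recovers only the \emph{number} $k$ of edges that two face boundaries $\gamma_i,\gamma_j$ share, by counting the cycles $c_1,\dots,c_k$ into which the minimal periodic orbit of frequency $\mu_i+\mu_j$ splits. It does not recover the lengths of the individual shared edges, so there is no ``natural length convention'' on $E(D)$ available from the spectrum, and nothing to transport back through $D^{*}\cong G$. The paper closes this differently (theorem \ref{edge_length}): once the combinatorial graph $G_0$ is known from the double dual, $3$-connectivity guarantees for each edge $e$ two cycles $c_1,c_2$ meeting exactly in $e$ and its endpoints; with $c_3$ the cycle $c_1\setminus\{e\}\cup(-c_2\setminus\{e\})$, theorem \ref{Bloch_H1} gives the three lengths (after matching frequencies to periodic orbits of $G_0$ via the dual construction), and $2l(e)=l(c_1)+l(c_2)-l(c_3)$. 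You need some argument of this kind; the double-dual bookkeeping alone cannot produce the edge lengths.
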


The plan of this paper is as follows. In the second section we collect various facts about combinatorial graphs that will be needed later on. 
In the third section we define 
differential forms on quantum graphs and use them to define the Schr\"odinger operators and the Bloch spectrum via differential forms. 
We define the Bloch spectrum via characters in section 
four and show that this definition is equivalent to the differential form version. We then define the Albanese torus 
of a quantum graph in the fifth section. We discuss a trace formula for quantum graphs, the key tool that allows us to get
 information about 
the quantum graph from the Bloch spectrum. In section seven we show that the Bloch spectrum determines the length of a shortest 
representative of each element in $H_1(G,\Z)$, see theorem \ref{Bloch_H1}. This is the main theorem that relates the Bloch spectrum to the graph. The other 
theorems are just consequences from this one. We then show that the Bloch spectrum determines the Albanese torus of a quantum graph. 
In section eight we use these properties to show that the Bloch spectrum 
determines the block structure and planarity of a quantum graph. If the graph is planar it determines a geometric dual of the graph. 
This information completely determines the underlying combinatioral graph from the Bloch spectrum if the graph is planar and 3-connected. 
In section nine we show that if we 
know the underlying combinatorial graph and it is $3$-connected then the Bloch spectrum determines 
the length of all edges in the graph, so we can recover the full quantum graph. In section ten we will treat disconnected graphs and 
show that our results still hold in this case.

\section{Combinatorial graph theory}

This chapter collects various basic facts about combinatorial graphs that will be required later. The material is mostly taken from 
\cite{Diestel05}, which provides an excellent introduction to the area. 

All our graphs are finite and connected. We will treat the case of disconnected graphs in section ten. We allow loops and multiple edges. Let $G$ be a graph. We will denote the set of vertices by $V$ and 
the set of edges by $E(G)$ or $E$ if there is no risk of confusion. Each edge has its two end vertices associated to it.

\begin{remark}
We will assume throughout the paper that our graphs do not have vertices of degree $2$. Once we pass to quantum graphs two edges connected 
by a vertex of degree $2$ with Kirchhoff boundary condition behave exactly the same way as a single longer edge does.
\end{remark}

\begin{definition}
 A cycle in a graph is a closed walk that does not repeat any edges or vertices. Whenever we use the word cycle in this paper we mean it 
in this graph theoretical sense and not in a homological sense.
\end{definition}

\begin{definition}
\label{overlap}
Let $\gamma_1$ and $\gamma_2$ be two oriented cycles  in a graph. 
We say they have edges of positive overlap if they have an edge in common and pass through it in the same direction. We say they have edges of 
negative overlap if they have an edge in common and pass through it in opposite directions.
\end{definition}

Note that two cycles can have both edges with positive and negative overlap.

\begin{lemma}
\label{basis_of_cycles}
 Every graph admits a basis of its homology that consists of cycles. 
\end{lemma}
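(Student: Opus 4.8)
The plan is to build a cycle basis from a spanning tree. First I would fix a spanning tree $T$ of the connected graph $G$; such a tree exists because $G$ is connected and finite. The edges of $G$ not in $T$ are called chords, and there are exactly $\dim H_1(G,\Z)$ of them (the first Betti number equals $|E|-|V|+1$). For each chord $e$, adjoining $e$ to $T$ creates a unique cycle $\gamma_e$ in $G$: namely $e$ together with the unique path in $T$ joining its two endpoints. This is the standard fundamental cycle associated to $e$ and $T$.

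Next I would verify that the collection $\{\gamma_e : e \text{ a chord}\}$ is a basis of $H_1(G,\Z)$. For linear independence, observe that $\gamma_e$ contains the chord $e$ but no other chord, so in the basis of $C_1(G,\Z)$ indexed by edges, the matrix expressing the $\gamma_e$ in terms of the edges has an identity block on the chord coordinates; hence no nontrivial integer (or real) combination of the $\gamma_e$ vanishes. For spanning, one counts dimensions: $H_1(G,\Z)$ is free of rank equal to the number of chords, and we have produced that many linearly independent cycles, each of which is a genuine $1$-cycle (a closed walk) and therefore lies in $H_1(G,\Z)$. Alternatively, spanning can be seen directly: given any $1$-cycle $z$, subtract $\sum_e (\text{coefficient of } e \text{ in } z)\,\gamma_e$ to obtain a cycle supported entirely on $T$, which must be zero since a tree has no cycles.

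I do not expect any serious obstacle here; the only mild point to be careful about is the distinction, flagged in the excerpt's definitions, between a graph-theoretic cycle (a closed walk repeating no vertex or edge) and a homology class. The fundamental cycles $\gamma_e$ are cycles in the strict graph-theoretic sense because the tree path between the endpoints of $e$ is a simple path and $e$ closes it up without repetition. So the basis we construct genuinely "consists of cycles" as the statement demands, not merely of homology classes represented by sums of cycles. That is the whole proof.
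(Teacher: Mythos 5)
Your proposal is correct and follows exactly the paper's approach: pick a spanning tree and take the fundamental cycles associated to the chords. The paper merely asserts that this collection is a basis, whereas you supply the (standard) verification of independence and spanning, which is a welcome but not essentially different addition.
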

\begin{proof}
 Pick a spanning tree of the graph. Associate to each edge of $G$ not in the spanning tree the cycle that consists of this edge and the 
path in the spanning tree that connects its end points. This collection of cycles is a basis of the homology. 
\end{proof}

\begin{definition}
 We call a graph with no leaves, that is vertices of degree $1$, a leafless graph. 
\end{definition}

\subsection{The block structure of a graph}

The block structure provides a broad view of the structure of a graph.

\begin{definition}
\label{k-connected}
 A graph $G$ is called $k$-connected if $|V|>k$ and $G\setminus X$ is connected for any subset of vertices $X$ with $|X|<k$.

Here $G \setminus X$ is the subgraph of $G$ with vertex set $V\setminus X$ and the edges of $G$ between these vertices.
\end{definition}

\begin{definition}
 A vertex $v$ in $G$ is called a cut vertex if $G \setminus \{v\}$ is disconnected.
\end{definition}

\begin{definition}
Consider the set of all cycles in the graph. Declare two cycles equivalent if they share at least one edge. This generates an 
equivalence relation.  We define the set of blocks to be the set of equivalence classes.
\footnote{This is a slight deviation from the standard definition. It is changed to allow graphs with loops and multiple edges. 
Edges that are not part of any cycle are not part of any block in our 
definition. Usually these edges are counted as blocks, too. }
\end{definition}

\begin{definition}
\label{define_block}
We define the block structure of a graph as follows. Each block in the graph is replaced by a small circle that we call a fat vertex. 
The cut vertices contained in this block correspond to the different attaching points on the fat vertex. For loops we interpret 
their vertex as the cut vertex where they are attached to the rest of the graph. 

All other blocks or 
remaining edges sharing one of the cut vertices with the original block are connected at the respective attaching point on the fat vertex. 

It does not matter how the different attaching points are arranged around the fat vertex.
We explicitly allow several fat vertices to be directly connected to each other without an edge in between.
\footnote{Again this is a non standard definition. Our definition contains the same information about the graph as the standard 
one modulo the addition of loops and multiple edges.}
\end{definition}

\begin{example}
This is an example of a graph and its block structure. Note that for simplicity of recognition all blocks in the graphs are either loops or copies of the 
complete graph on $4$ vertices, $K_4$.
\begin{figure}[h]
\centering
\scalebox{0.4}{\includegraphics{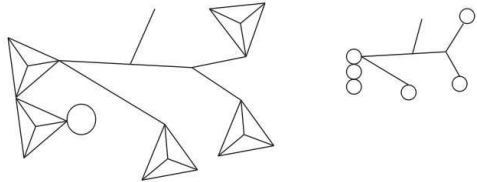}}
\caption{a quantum graph and its block structure}
\end{figure}
\end{example}

\begin{remark}
\label{cycle_single_block}
Any cycle in the graph is confined to a single block.
Thus the vertices and edges in the block structure never form a cycle and the block structure has a tree like shape.
\end{remark}

We will phrase the next lemma in the context of quantum graphs as we will need it later on. Combinatorial graphs can be seen as 
quantum graphs where all edges have length one.

\begin{lemma}
 \label{tree_from_leaves}
Let $G$ be a quantum tree graph with no vertices of degree $2$. Then the set of distances between any pair of leaves determines both the 
combinatorial tree graph underlying $G$ and all individual edge lengths.
\end{lemma}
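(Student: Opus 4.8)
The plan is to induct on the number of leaves, using the classical idea behind neighbor-joining: locate a \emph{cherry} (a pair of leaves whose paths to the rest of the tree first meet at a common vertex), contract it to a single leaf, and recurse. Let $L$ denote the leaf set and write $d(x,y)$ for the given distance between leaves $x,y\in L$. Since $G$ is a tree, for any three leaves $x,y,z$ the three paths between them meet in a unique \emph{median} vertex $m(x,y,z)$, and the quantities
\[
\delta_x(y,z)=\tfrac12\bigl(d(x,y)+d(x,z)-d(y,z)\bigr)
\]
equal the distance from $x$ to that median; this is the standard four-point computation and is entirely determined by the given data. The base case is a tree with two leaves, which (having no degree-$2$ vertices) is a single edge, and its length is $d$ of the two leaves.

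First I would identify a cherry. Fix any leaf $x$ and choose a leaf $y$ minimizing $d(x,y)$; I claim there is a leaf $z$ such that $y$ and $z$ form a cherry, i.e. the path from $y$ leaves toward the rest of the tree through a vertex $v$ that is also the first branch vertex on the path from $z$. Concretely, among all leaves $w\neq y$ pick one minimizing $d(y,w)$; a short argument using the median identity above shows $y$ and this $w$ attach at a common vertex $v$, and that $v$ has degree at least $3$ (it is a genuine branch vertex, since $G$ has no degree-$2$ vertices and $y,w$ are distinct leaves). The two pendant edge lengths are then recovered by $\ell(y)=\delta_y(w,u)$ and $\ell(w)=\delta_w(y,u)$ for any third leaf $u$ (one exists unless the whole tree is the single edge $yw$, already handled), and these values are independent of the choice of $u$ precisely because $v$ is the common attaching vertex.

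Next I would contract the cherry: delete $y$ and $w$, introduce a new leaf $y'$ placed at the vertex $v$, and define its distances to every remaining leaf $u$ by $d(y',u)=d(y,u)-\ell(y)=\delta_y(w,u)$. This is again manifestly computable from the original distance data. The resulting smaller tree $G'$ has one fewer leaf; however $v$ may now have degree $2$ in $G'$ if it originally had degree exactly $3$, so strictly speaking I first record the edge from $v$ into the rest of the tree and, if needed, remember that $y'$ should ultimately be re-expanded. By the induction hypothesis the leaf-distance data of $G'$ determines $G'$ as a metric tree; re-attaching the two pendant edges $\ell(y),\ell(w)$ at the vertex of $G'$ corresponding to $y'$ reconstructs $G$ both combinatorially and metrically. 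Suppressing any degree-$2$ vertices created along the way, and noting the reconstruction never had a choice, gives uniqueness.

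The main obstacle is the combinatorial bookkeeping around vertices whose degree drops to $2$ under contraction: one must be careful that "no vertices of degree $2$" is a hypothesis on $G$, not an invariant preserved by the recursion, so the cleanest route is to allow degree-$2$ vertices in the intermediate trees $G'$, prove the statement for that slightly larger class (where a degree-$2$ vertex simply subdivides an edge and is invisible to the metric), and then observe that $G$ itself, having none, is recovered unambiguously. The other point requiring care is verifying that the cherry-identification step genuinely produces a pair attaching at a single vertex; this is where the minimality of $d(y,w)$ is used, via the median identities, to rule out the two paths branching apart before reaching a common vertex.
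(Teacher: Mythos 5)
Your overall strategy (induct on the number of leaves via the Steiner-point quantities $\delta_x(y,z)=\tfrac12\bigl(d(x,y)+d(x,z)-d(y,z)\bigr)$, peel off a cherry, contract, recurse) is a legitimate alternative to the paper's argument, which instead inserts leaves one at a time into an already-reconstructed subtree. However, your cherry-identification step contains a genuine error: it is \emph{not} true that the leaf $w$ minimizing $d(y,w)$ forms a cherry with $y$, nor that the leaf $y$ closest to a fixed leaf $x$ lies in a cherry at all. Counterexample: let $v_1,v_2$ be adjacent internal vertices with leaves $a,b$ attached to $v_1$ and leaves $c,d$ attached to $v_2$, with edge lengths $\ell(av_1)=\ell(cv_2)=\ell(v_1v_2)=1$ and $\ell(bv_1)=\ell(dv_2)=100$. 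Starting from $x=a$ your procedure selects $y=c$ (at distance $3$) and then $w=a$ (again at distance $3$), but $\{a,c\}$ is not a cherry: $a$ and $c$ attach at different vertices. (Your own consistency check would detect this, since $\delta_c(a,b)=2\neq 1=\delta_c(a,d)$, but the proof as written asserts the cherry property rather than testing for it.) This is precisely the classical pitfall that forces the neighbor-joining algorithm to use a corrected criterion rather than raw distances. The step is repairable --- for instance, $\{y,w\}$ is a cherry if and only if the function $u\mapsto d(y,u)-d(w,u)$ is constant over the remaining leaves $u$, which is checkable from the given data and always admits a solution at the ends of the internal spine of the tree --- but the ``short argument using the median identity'' you invoke does not exist for the criterion as you stated it.

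The rest of your argument (recovering the two pendant lengths, contracting the cherry to a new leaf at $v$ with $d(y',u)=\delta_y(w,u)$, recursing, and explicitly allowing degree-$2$ vertices in the intermediate trees) is correct once a cherry is actually in hand, and your treatment of the degree-$2$ bookkeeping is more careful than the paper's. For comparison, the paper sidesteps the cherry problem entirely: it fixes two leaves, locates the branch point of each newly added leaf on the path between them via the same three-leaf computation, and descends into subtrees until the attachment point is pinned down, so no global minimization over the distance matrix is ever needed. With the cherry-detection criterion corrected, your proof goes through; as written, the key step fails.
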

\begin{proof}
Given three leaves $B_i$, $B_j$ and $B_k$ the restriction of the tree to the paths between these leaves is 
shaped like a star. We will denote the length of the three branches by $l_i, l_j$ and $l_k$. The distances between the leaves determine 
the quantities $l_i+l_j$, $l_i+l_k$ and $l_j+l_k$ and thus the three individual lengths $l_i, l_j$ and $l_k$. This means that 
given a path between two leaves $B_i$ and $B_j$ and a third leaf $B_k$ we can find both the point on the path from $B_i$ to $B_j$ 
where the paths from $B_i$ and $B_j$ to $B_k$ branch away and the length of the path from this point to $B_k$. 

We will use this fact repeatedly and proceed by induction on the number of leaves. 

If there are only two leaves the tree consists of a single interval with length the distance between the two leaves. 

Suppose we already have a quantum tree graph 
with leaves $B_1, \ldots, B_{n-1}$. We now want to attach a new leaf $B_n$. We will first look at the leaves $B_1$ and $B_2$ and
 find the point on the path from $B_1$ to $B_2$ where the paths to $B_n$ branch away. If this point is not a 
vertex of the tree, we create a new vertex and attach the leaf $B_n$ on an edge of suitable length $l_n$. If this point is a vertex of the 
tree we know that the attachment point of $B_n$ has to lie on the subtree branching away from the path from $B_1$ to $B_2$ starting at 
that vertex. Pick a leaf on this subtree, without loss of generality $B_3$ 
and look at the path from $B_1$ to $B_3$. We can again find the point on that path where the paths to $B_n$ branch away. If this point 
is not a vertex of the tree we found the attachment point, otherwise we have reduced our search to an even 
smaller subtree. Continuing this process we will eventually end up with an attachment point on an edge or on a subtree that consists 
of a single vertex. 
\end{proof}

\subsection{Planarity of graphs}

The edge space of a graph is the $F_2$-vector space over the set of (unoriented) edges of the graph. The cycle space is the subspace 
generated by cycles in the graph.

Given an embedding into $\R^2$ of a planar graph the faces of the embedding are the disconnected components of $\R^2 \setminus G$.

\begin{theorem}
MacLane (1937) \cite{Diestel05}\\
A graph is planar if and only if its cycle space has a simple basis.
\end{theorem}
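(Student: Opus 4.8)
The plan is to prove both implications, reducing first to the case of a $2$-connected graph. Recall that a basis $\{C_1,\dots,C_k\}$ of the cycle space is called \emph{simple} if every edge of $G$ lies in at most two of the $C_i$; here $k=|E|-|V|+1$ is the dimension of the cycle space. As a first step I would reduce to $2$-connected graphs: the cycle space of $G$ is the direct sum of the cycle spaces of the blocks of $G$ (a bridge lies in no cycle), and $G$ is planar if and only if each of its blocks is. A simple basis of $G$ restricts to a simple basis of each block, and conversely the union of simple bases of the blocks is a simple basis of $G$, because two edges lying in different blocks never occur together in a cycle. So it suffices to treat a $2$-connected graph $G$ with $|V|\ge 3$, the cases $|V|\le 2$ being trivial.

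For the direction "planar $\Rightarrow$ simple basis" I would fix an embedding of the $2$-connected graph $G$ in the $2$-sphere. Then every face is bounded by a cycle and every edge lies on the boundary of exactly two faces. By Euler's formula the number of faces equals $|E|-|V|+2$, so if $F_0,F_1,\dots,F_k$ denote the face-boundary cycles there are $k+1$ of them. Their sum in the cycle space is $0$, since each edge lies on exactly two of them and hence cancels; and $\{F_1,\dots,F_k\}$ is linearly independent, because a nonempty proper subfamily summing to $0$ would describe a union of faces with empty boundary, which on the sphere (once $F_0$ is omitted) is impossible. Thus $\{F_1,\dots,F_k\}$ is a basis, and it is simple because each edge lies on at most two faces.

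For the converse "simple basis $\Rightarrow$ planar" I would start from a simple basis $\{C_1,\dots,C_k\}$ of the $2$-connected graph $G$ and set $C_0:=C_1+\dots+C_k$. Every edge of $G$ lies in at least one $C_i$, since it lies in some cycle, which is an $F_2$-linear combination of the $C_i$; combined with simplicity, every edge lies in exactly two members of $\{C_0,C_1,\dots,C_k\}$. Glue a closed disk onto $G$ along each $C_i$; since every edge is incident to exactly two of the glued disks, the result is a surface $S$ (up to possible pinching at vertices, discussed below) in which $G$ is embedded with the $C_i$ as face boundaries. The only linear relation among $C_0,\dots,C_k$ is the obvious one, so $S$ has exactly $k+1$ faces and $\chi(S)=|V|-|E|+(k+1)=2$; as $S$ is connected, $S$ is the sphere and $G$ is planar. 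As a consistency check, and the concrete combinatorial heart of the theorem, the same "add the sum" trick shows $K_5$ and $K_{3,3}$ have no simple basis: for $K_5$ one would obtain $7$ cycles with $\sum_i|C_i|=2|E|=20$ while every cycle has length $\ge 3$, and for $K_{3,3}$ one obtains $5$ cycles with $\sum_i|C_i|=18$ while every cycle has length $\ge 4$, both impossible.

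The main obstacle is the parenthetical "up to pinching" in the converse: when the disks are glued, the disks meeting a vertex $v$ pair up the edge-ends at $v$ into a $2$-regular graph, which a priori could be several disjoint cycles rather than a single one, so $v$ need not be a manifold point of $S$. Ruling this out, equivalently showing that a simple basis of a $2$-connected graph is genuinely induced by an embedding, is the delicate part of MacLane's theorem. It can be handled either by resolving the pinch points (splitting each offending vertex) and tracking the change in Euler characteristic, the value $2$ then forcing every vertex link to be a single cycle, or, as in the classical proofs, by an induction on $|E|$ using the fact that every $2$-connected graph on at least four vertices has an edge whose contraction preserves $2$-connectedness, together with Kuratowski's theorem to exclude the non-planar case via the computation just given.
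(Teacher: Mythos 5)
The paper does not actually prove this statement: it is quoted from Diestel's textbook with a citation, and only the corollary that follows it (the reformulation in terms of oriented cycles with no positive overlap) is proved in the text. So there is no proof of the paper's to compare yours against; the question is whether your argument stands on its own. Your reduction to $2$-connected blocks and your proof of the direction ``planar $\Rightarrow$ simple basis'' via face boundaries, Euler's formula, and the single linear relation among the face cycles are essentially the standard argument and are fine (modulo the small point that restricting a basis to a block gives a simple \emph{spanning} family, from which one must still extract a basis).

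The converse direction, however, is not proved. You identify the pinching problem in the disk-gluing construction yourself and then defer it to ``either'' of two sketched repairs, neither of which is carried out; but that is precisely the hard content of MacLane's theorem, so as written the argument has a hole exactly where the theorem lives. Two further points would need attention even if you pursued the repairs. First, the elements of a simple basis are elements of the cycle space, i.e.\ edge-disjoint unions of cycles rather than single cycles, so ``glue a closed disk onto $G$ along each $C_i$'' is not well defined until you have reduced to a basis of cycles, and your Euler-characteristic count silently assumes each of $C_0,\dots,C_k$ bounds one disk. Second, the $K_5$ and $K_{3,3}$ counts are correct for those graphs, but to exclude an arbitrary non-planar $G$ via Kuratowski you need the lemma that a simple basis of $G$ induces a simple basis of $G-e$ (replace the at most two basis elements through $e$ by their sum), hence of any subgraph reached by deletions and in particular of the subdivision of $K_5$ or $K_{3,3}$ inside $G$; your block-restriction argument does not supply this, since a general subgraph is not a direct summand of the cycle space. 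That deletion lemma combined with your counting is in fact the classical proof (and the one in Diestel); writing it out would close the gap more cheaply than the surface-gluing route.
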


Simple means that each edge is part of at most 2 cycles in the basis.

\begin{corollary} 
\label{simple=planar}
A graph is planar if and only if it admits a basis of its homology consisting of oriented cycles having no positive overlap.
\end{corollary}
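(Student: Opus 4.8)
The plan is to move back and forth between the integral homology $H_1(G,\Z)$ and the $F_2$ cycle space and then feed the result into MacLane's theorem. Reduction of coefficients mod $2$ identifies $H_1(G,\Z)\otimes F_2$ with $H_1(G,F_2)$, and for a graph this is exactly the cycle space: there are no $2$-cells, so $H_1$ is the kernel of $\partial_1$, i.e.\ the subspace of the edge space generated by cycles, and there is no Tor contribution because $H_0(G,\Z)$ is free. Under this identification an oriented graph cycle $\gamma$ maps to the (unoriented) edge set underlying $\gamma$, and a $\Z$-basis of $H_1(G,\Z)$ maps to an $F_2$-basis of the cycle space; in particular a homology basis consisting of oriented cycles reduces to a cycle-space basis consisting of cycles, and the point will be that ``no edge of positive overlap'' corresponds precisely to the sparseness condition (each edge in at most two basis elements) in MacLane's theorem.

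For the implication ``no positive overlap basis $\Rightarrow$ planar'': let $\gamma_1,\dots,\gamma_n$ be a homology basis of oriented cycles in which no edge is an edge of positive overlap. The key observation is that no edge $e$ lies on three of these cycles. Indeed, fix an orientation of $e$; a cycle containing $e$ traverses it exactly once (cycles do not repeat edges), either forwards or backwards, so among any three cycles through $e$ two traverse it the same way, which is an edge of positive overlap. Hence every edge lies in at most two of the reduced cycles $\bar\gamma_1,\dots,\bar\gamma_n$, so these form a simple basis of the cycle space, and MacLane's theorem gives that $G$ is planar.

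For the converse, suppose $G$ is planar. I would first reduce to the $2$-connected case: by Remark~\ref{cycle_single_block} every cycle lies in a single block, so $H_1(G,\Z)$ and the cycle space both split as direct sums over the blocks and distinct blocks share no edges; a bridge contributes nothing to homology and a loop is itself a cycle contributing a single generator carried by one edge, so it suffices to build the desired basis inside each $2$-connected block $B$. Fix a planar embedding of $B$. In a $2$-connected plane graph every face is bounded by a cycle, and by Euler's formula the number of bounded faces equals the first Betti number of $B$. Orienting every bounded face boundary consistently (say counterclockwise in the plane) yields a homology basis of $B$: these cycles span $H_1(B,\Z)$, and the only $\Z$-linear relation among all the face boundaries, including the unbounded one, is that their signed sum vanishes (the relation coming from $H_1(S^2)=0$), so discarding the unbounded face leaves an independent set. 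Finally each edge of $B$ lies on exactly two face boundaries, and these induce opposite orientations on the shared edge when both are taken counterclockwise; hence a shared edge of two basis cycles is always an edge of negative overlap, and no basis edge has positive overlap. Gluing the blocks together gives the required basis of $H_1(G,\Z)$.

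The main obstacle is this forward direction: one must take care that the ``face cycles'' really are cycles, which is exactly what forces the passage to $2$-connected blocks, and that they form a basis of the \emph{integral} homology rather than merely of the $F_2$ cycle space. Both points rest on the standard structure of $2$-connected plane graphs quoted above (each face bounded by a cycle, Euler's formula for the count, and the single homological relation among the face boundaries). By contrast, the $F_2 \leftrightarrow \Z$ bookkeeping used in the other direction, together with the ``no edge on three cycles'' pigeonhole argument, is routine.
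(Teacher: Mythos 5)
Your proof is correct and follows essentially the same route as the paper's: reduction to $2$-connected blocks, counterclockwise-oriented bounded face boundaries for the planar direction, and the pigeonhole observation that an edge on three basis cycles forces positive overlap, combined with MacLane's theorem, for the other direction. The extra detail you supply on the $\Z$-versus-$F_2$ bookkeeping is sound but not a substantive departure from the paper's argument.
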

\begin{proof}
Each cycle is confined to a single block of the graph and two cycles in different blocks share at most a single vertex and thus have zero 
overlap. Thus it is sufficient to prove the statement for $2$-connected graphs.

Assume $G$ is planar and $2$-connected and choose an embedding into $\R^2$. The set of boundaries of faces with the exception of the outer 
face forms a 
basis of the homology that consists of cycles and is simple, see \cite{Diestel05}. We orient all basis cycles counterclockwise. Then no two 
of them can run through the same edge in the same direction as no basis cycle can lie inside another basis cycle. Thus there are 
no edges of positive overlap.

Any basis of the homology where every basis element can be represented by a cycle in the graph gives rise to a basis of the cycle 
space consisting of exactly these cycles.
Thus if the graph is not planar any 
basis of cycles of the homology is not simple by MacLanes theorem. 
Therefore there exists an edge that is part of three basis cycles. No matter how we orient these three cycles, two of them have to go 
through this edge with the same orientation and thus have edges of positive overlap.
\end{proof}

\begin{definition}
\label{non-positive_basis}
 We call a basis without edges of positive overlap a non-positive basis of the graph and remark that a non-positive basis is always simple.
\end{definition}

If $G$ is $2$-connected and planar we can find a simple basis by picking the boundaries of faces. This proposition states that the converse is 
true, too.

\begin{proposition}
\cite{MoharThomassen01}
\label{positive=facial}
 Given a simple basis of the cycle space of a $2$-connected planar graph there exists an embedding into $\R^2$ such that all 
basis elements are boundaries of faces.
\end{proposition}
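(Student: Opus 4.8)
The plan is to promote the simple basis to a family of cycles behaving like the full set of face boundaries of a plane graph, glue polygons along these cycles to obtain a closed surface, and then recognise that surface as the sphere by an Euler characteristic count.

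First I would enlarge the basis by one cycle. Write $m=|E|-|V|+1$ for the dimension of the cycle space, let $C_1,\dots,C_m$ be the given simple basis (which in our applications, and after refining in general, we may take to consist of genuine cycles), and set $C_0:=C_1+\dots+C_m$, the sum taken over $\mathbb{F}_2$. The claim is that in the family $\{C_0,C_1,\dots,C_m\}$ every edge of $G$ lies in exactly two members. Since $G$ is $2$-connected it is bridgeless, so every edge lies in the support of some cycle and hence, as the $C_i$ span the cycle space, in the support of at least one $C_i$ with $i\ge 1$; by simplicity it lies in at most two of them. If an edge lies in exactly one of $C_1,\dots,C_m$ it survives in the mod-$2$ sum, so it lies in $C_0$ as well, giving two members; if it lies in exactly two of $C_1,\dots,C_m$ it cancels in $C_0$, again giving exactly two. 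One checks along the way, again using $2$-connectedness, that $C_0$ is itself a single cycle.

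Next I would build a space $X$ by taking, for each $i=0,1,\dots,m$, a polygon $P_i$ whose sides are labelled by the edges of $C_i$ in cyclic order, and gluing the $P_i$ onto $G$ by identifying each side with the edge carrying its label. By the previous step every edge of $G$ is identified with exactly two polygon sides, so $X$ is a closed (pseudo-)surface with $G$ as its $1$-skeleton. Counting cells, $X$ has $|V|$ vertices, $|E|$ edges and $m+1=|E|-|V|+2$ faces, so its Euler characteristic equals $|V|-|E|+(m+1)=2$. Granting that $X$ is a genuine closed surface, connectedness of $G$ makes $X$ connected, and a connected closed surface of Euler characteristic $2$ must be the sphere $S^2$ (every non-orientable closed surface has Euler characteristic $\le 1$). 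Deleting the open face $P_0$ from $S^2$ leaves a closed disc, and this realises $G$ as a plane graph in which $P_0$ is the outer face and $C_1,\dots,C_m$ are exactly the boundary cycles of the bounded faces, as required.

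The main obstacle is the step taken on faith above: that $X$ is an honest surface, i.e.\ that at every vertex $v$ the polygon corners at $v$ close up into a single cycle rather than into several. For each $v$ one forms an auxiliary graph on the set of edges of $G$ incident to $v$, joining two of them whenever some $C_i$ passes through $v$ along both; each such edge then has degree exactly two (it lies in exactly two of the $C_i$), so this auxiliary graph is a disjoint union of cycles, and the task is to exclude the disconnected case. If the link at $v$ did split, splitting $v$ accordingly would produce a still-connected graph $G'$ (each new copy of $v$ retains an edge into the connected graph $G\setminus\{v\}$) carrying a genuine sphere embedding with the $C_i$ as faces; one must then argue that merging the copies of $v$ back together is consistent with planarity only when $v$ was a cut vertex of $G$, contradicting $2$-connectedness. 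This is precisely the point established in \cite{MoharThomassen01}. Alternatively one can decompose $G$ along its $2$- and $3$-cuts, reduce to the $3$-connected case, and there invoke Whitney's uniqueness of the embedding of a $3$-connected planar graph together with the fact that in that case a simple basis is forced to consist of face boundaries.
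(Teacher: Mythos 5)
The paper does not prove this proposition at all: it is quoted from \cite{MoharThomassen01} as a known consequence of MacLane's planarity criterion, so there is no internal argument to compare yours against. Your outline is the standard one (adjoin $C_0=\sum_i C_i$, check every edge lies in exactly two of $C_0,\dots,C_m$, glue polygons, compute $\chi=|V|-|E|+(m+1)=2$, recognise the sphere, delete $P_0$), and the parts you do write out --- the ``exactly two'' count from simplicity plus bridgelessness, and the Euler characteristic computation --- are correct.

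However, as a self-contained proof the proposal has a genuine gap, and it is exactly the one you flag: you never establish that the glued complex $X$ is a surface rather than a pseudosurface pinched at some vertices. The Euler characteristic argument is worthless without this, since a pinched complex of characteristic $2$ need not be a sphere, and the conclusion that $G$ embeds with the $C_i$ as face boundaries is precisely what fails in the pinched case. Your proposed repair --- split each bad vertex according to the components of its link and argue that re-identifying the copies contradicts $2$-connectedness --- is the right idea, but the sentence ``one must then argue that merging the copies of $v$ back together is consistent with planarity only when $v$ was a cut vertex'' is not an argument: connectivity of the split graph $\widetilde{G}$ (equivalently of the normalisation $\widetilde{X}$) is what forces $\chi(\widetilde{X})\le 2$ and hence a single link component at every vertex, and deducing that connectivity from $2$-connectedness of $G$ is the actual content of the theorem in \cite{MoharThomassen01}. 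Deferring that step to the very reference the proposition is quoted from means you have not proved anything the citation does not already give. Two smaller unproved assertions compound this: that a simple basis can be refined to a simple basis of genuine cycles (a basis element of the cycle space is a priori only an edge-disjoint union of cycles), and that $C_0$ is itself a single cycle --- the latter is again only known a posteriori, once the embedding exists, and your ``one checks along the way'' does not supply the check. Either fill in the link-connectivity argument in detail or simply cite the result, as the paper does; the half-measure is the weakest option.
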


\subsection{Dual graphs}

Planar graphs have a notion of a dual graph. We will present two different ways of defining it and list some properties.

\begin{definition}
Given a planar embedding of a graph $G$ we define the geometric dual graph $G^*$ associated to this embedding. 
The vertices of $G^*$ are the 
faces in the embedding of $G$. The number of edges joining to vertices in $G^*$ is the number of edges 
that the corresponding faces in $G$ have in common. 
\end{definition}

\begin{definition}
 A cut of a graph $G$ is a subset of (open) edges $S$ such that $G\setminus S$ is disconnected. A cut is minimal if no proper subset of $S$ is 
a cut.
\end{definition}

\begin{definition}
 Given a planar graph $G$ a graph $G^*$ is an abstract dual of $G$ if there is a bijective map $\psi : E(G) \ra E(G^*)$ such that 
for any $S \subseteq E(G)$ the set $S$ is a cycle in $G$ if and only if $\psi(S)$ is a minimal cut in $G^*$.
\end{definition}

\begin{proposition}
\cite{Diestel05}
A planar graph can have multiple non isomorphic abstract duals. Any geometric dual of a planar graph is an abstract dual and vice versa. The 
dual of a 
planar graph is planar and $G$ is an abstract dual of $G^*$. If $G$ is $3$-connected than $G^*$ is unique up to isomorphism.
\end{proposition}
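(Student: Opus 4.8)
The plan is to treat the four assertions separately, since they rest on rather different ideas: the equivalence of geometric and abstract duals comes from the interplay between cycles and minimal cuts under planar duality (via the Jordan curve theorem), the self-duality statements from $(G^*)^*\cong G$ for connected $G$, and the uniqueness statement from Whitney's theorem that a $3$-connected planar graph has an essentially unique embedding in the sphere. Throughout, I would take $\psi$ to be the natural bijection sending an edge $e$ of $G$ to the dual edge $e^*$ joining the two faces of the embedding incident to $e$.

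First I would show that a geometric dual $G^*$ of an embedding of $G$ is an abstract dual. If $S\subseteq E(G)$ is a cycle, then by the Jordan curve theorem its image in $\R^2$ separates the plane into an inside and an outside region, the faces of $G$ are partitioned accordingly, and $\psi(S)$ is exactly the set of dual edges running between an inside face and an outside face; hence $\psi(S)$ is an edge cut of $G^*$, and it is minimal because deleting a proper subset still leaves the two face-classes joined in $G^*$. Conversely a minimal edge cut of $G^*$ pulls back under $\psi^{-1}$ to a minimal edge set of $G$ whose removal disconnects the plane into exactly two pieces, which forces it to be a cycle. Since for connected $G$ the faces of the embedded dual $G^*$ are in bijection with the vertices of $G$, one has $(G^*)^*\cong G$; this simultaneously gives that $G$ is an abstract dual of $G^*$, and, as $G^*$ is visibly drawn in $\R^2$, that the dual of a planar graph is planar.

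For the converse, that every abstract dual arises as a geometric dual, I would invoke Whitney's planarity criterion, and this is the step I expect to be the main obstacle. One route: observe that an abstract dual of $G$ restricts (under deletion/contraction of edges) to an abstract dual of every minor of $G$, and that neither $K_5$ nor $K_{3,3}$ admits an abstract dual because the relevant rank–corank count fails; by Kuratowski's theorem $G$ is then planar, and one checks that the given abstract dual is, via $\psi$, isomorphic to the geometric dual coming from some embedding, both being pinned down by the cycle–cut correspondence (equivalently, by the requirement that the cycle matroid of $G^*$ be the dual matroid of that of $G$).

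Finally, for the last two assertions: non-uniqueness of abstract duals in general follows because the abstract dual depends only on the dual cycle matroid of $G$, and a matroid need not determine its graph when the graph is not $3$-connected; concretely, performing a Whitney flip of a $2$-connected non-$3$-connected planar graph at a $2$-separator yields a $2$-isomorphic graph whose geometric (hence abstract) dual is in general not isomorphic to the original one. When $G$ is $3$-connected, its planar dual is again $3$-connected, and by Whitney's uniqueness theorem a $3$-connected planar graph has a unique embedding in the sphere up to homeomorphism and reflection; therefore its face set and incidences, and thus its geometric dual, are determined up to isomorphism, and by the equivalence proved above the abstract dual is unique as well.
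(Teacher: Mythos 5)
This proposition is quoted from \cite{Diestel05} and the paper offers no proof of its own, so there is no internal argument to compare against; your outline is essentially the standard textbook treatment that the citation points to. The pieces that are solid: the Jordan-curve argument showing that a cycle of $G$ maps to a cut of the geometric dual and back (this is Diestel's cycle--bond correspondence), the identity $(G^*)^*\cong G$ for connected $G$ (and the paper does assume connectedness), the Whitney-flip construction of non-isomorphic abstract duals (which dovetails with the paper's own lemma \ref{2-isomorphic}), and Whitney's unique-embedding theorem for the $3$-connected case. Two small inaccuracies in the first part: minimality of the cut $\psi(S)$ is not ``deleting a proper subset still leaves the two face-classes joined'' (that is the negation of being a cut, not of being minimal); the correct criterion is that both the inside-faces and the outside-faces induce connected subgraphs of $G^*$, which itself needs a short topological argument. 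Likewise, in the converse direction the fact that the preimage of a minimal cut ``is forced to be a cycle'' is usually argued by showing the preimage contains a cycle and then invoking minimality, not by a disconnection-of-the-plane count.

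The genuine gap is the ``vice versa'' clause, i.e.\ that every abstract dual of a planar $G$ is realized as a geometric dual of some embedding. Your Kuratowski detour is superfluous here --- planarity of $G$ is a hypothesis of the proposition, so you do not need to re-derive it from the existence of an abstract dual --- and the step that actually carries the content is compressed into ``one checks that the given abstract dual is, via $\psi$, isomorphic to the geometric dual coming from some embedding.'' That is the hard half of Whitney's theorem. The matroid route you gesture at can be completed: an abstract dual $H$ satisfies $M(H)=M^*(G)=M(G^*)$ for any geometric dual $G^*$, hence $H$ is $2$-isomorphic to $G^*$ by Whitney's $2$-isomorphism theorem, and one must then show that every graph $2$-isomorphic to a geometric dual of $G$ is itself a geometric dual of $G$ for a suitable re-embedding (Whitney flips of $G^*$ correspond to re-embeddings of $G$ across $2$-separators). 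As written, though, this chain is asserted rather than proved, and you have correctly identified it yourself as the main obstacle; the proposal is therefore an accurate roadmap but not yet a complete proof of that clause.
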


\begin{definition}
 We call two graphs $G$ and $H$ $2$-isomorphic if there is a bijection between their edge sets that carries cycles to cycles. Note 
that this does not imply that the graphs are isomorphic.
\end{definition}

\begin{lemma}
\label{2-isomorphic}
Two planar graphs $G$ and $H$ are $2$-isomorphic if and only if they have the same set of abstract duals.
\end{lemma}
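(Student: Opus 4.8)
The plan is to prove both implications by unwinding the definition of abstract dual, using the one external input that a planar graph always possesses at least one abstract dual, namely the geometric dual of any planar embedding (by the proposition cited from \cite{Diestel05} above which identifies geometric and abstract duals).

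For the forward direction, suppose $\phi\colon E(G)\to E(H)$ realizes the $2$-isomorphism, so that $S\subseteq E(G)$ is a cycle in $G$ if and only if $\phi(S)$ is a cycle in $H$. Let $G^*$ be an arbitrary abstract dual of $G$, witnessed by a bijection $\psi\colon E(G)\to E(G^*)$. I would then check that $\psi\circ\phi^{-1}\colon E(H)\to E(G^*)$ witnesses that $G^*$ is an abstract dual of $H$: for $T\subseteq E(H)$, the set $T$ is a cycle in $H$ iff $\phi^{-1}(T)$ is a cycle in $G$ iff $\psi(\phi^{-1}(T))$ is a minimal cut in $G^*$. Since $2$-isomorphism is a symmetric relation (the defining bijection may be inverted), the same argument applied to $\phi^{-1}$ shows that every abstract dual of $H$ is an abstract dual of $G$, so the two sets of abstract duals coincide.

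For the converse, assume $G$ and $H$ have the same set of abstract duals. Because $G$ is planar, it has a geometric dual $G^*$ for some embedding, which is an abstract dual of $G$ and hence, by hypothesis, also an abstract dual of $H$. Choose bijections $\psi_G\colon E(G)\to E(G^*)$ and $\psi_H\colon E(H)\to E(G^*)$ witnessing these two duality relations. I would then verify that $\phi:=\psi_H^{-1}\circ\psi_G\colon E(G)\to E(H)$ carries cycles to cycles: for $S\subseteq E(G)$, the set $S$ is a cycle in $G$ iff $\psi_G(S)$ is a minimal cut in $G^*$ iff $\psi_H(\phi(S))=\psi_G(S)$ is a minimal cut in $G^*$ iff $\phi(S)$ is a cycle in $H$. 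Thus $\phi$ is a $2$-isomorphism between $G$ and $H$.

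I do not expect a serious obstacle; the argument is essentially definition-chasing. The only points needing care are: reading both the definition of $2$-isomorphism and that of abstract dual as genuine biconditionals, so that the inverse bijections transport the relevant structure and $2$-isomorphism is visibly symmetric; and, in the converse direction, invoking the existence of at least one abstract dual of $G$, which is precisely where planarity of $G$ and the cited proposition are used.
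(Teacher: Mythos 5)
Your proof is correct and follows essentially the same route as the paper's: in the forward direction you transport an abstract dual of $G$ to $H$ via $\psi\circ\phi^{-1}$, and in the converse you compose the two duality bijections through a common dual $G^*$ to get $\psi_H^{-1}\circ\psi_G$ as the $2$-isomorphism. Your added remarks on symmetry of $2$-isomorphism and on the existence of at least one abstract dual (via planarity) are points the paper leaves implicit, but the argument is the same.
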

\begin{proof}
 Let $\varphi : E(G) \ra E(H)$ be a $2$-isomorphism and let $G^*$ be an abstract dual of $G$ with edge bijection $\psi$. Then 
$\psi \circ \varphi^{-1}$ is an edge bijection that makes $G^*$ an abstract dual of $H$.

Let $G$ and $H$ have the same abstract duals and let $G^*$ be an abstract dual. Let $\psi_1$ be an edge bijection between $G$ and $G^*$ and 
let $\psi_2$ be an edge bijection between $H$ and $G^*$. Then $\psi_2^{-1}\circ\psi_1$ is a $2$-isomorphism between $G$ and $H$.
\end{proof}

\section{Differential forms}

The concept of differential forms on a quantum graph was introduced in \cite{GaveauOkada91}. 

Let $G$ be a quantum graph, let $E$ and $V$ be the set of edges and vertices. Let $L_e$ denote the length of the edge $e$.
Let $\{ e \sim v \}$  denote the set of edges $e$ adjacent to a vertex $v$.

\begin{definition}
 A vector field $X$ on $G$ is a smooth vector field on each edge, seen as a closed interval. In particular a vector field is multivalued 
at the vertices. 
\end{definition}

Let $\nu_{v,e}$ denote the outward unit normal for the edge $e$ at the vertex $v$ .
Let $X_1$ be an auxiliary vector field that is real and has constant length $1$ on all edges.

\begin{definition}
A $0$-form $f$ on $G$ is a function that is $C^{\infty}$ on the edges, that is continuous and that satifies the Kirchhoff boundary 
condition 
\begin{equation*}
\sum_{e \sim v }\nu_{v,e}(f|_e)=0 
\end{equation*}
at all vertices $v \in V$. We denote the space of $0$-forms by $\Lambda^0$.
\end{definition}

\begin{definition}
A $1$-form $\alpha$ on $G$ consists of a $1$-form $\alpha_e$ on each closed edge $e$ such that $\alpha$ satisfies the boundary condition
\begin{equation*}
 \sum_{e \sim v} \alpha_e(\nu_{v,e})=0
\end{equation*}
at all vertices $v \in V$. We denote the space of $1$-forms by $\Lambda^1$.
\end{definition}

\begin{definition}
 For a real $1$-form $\alpha$ we define an operator $d_{\alpha} : \Lambda^0 \ra \Lambda^1$ through the requirement 
\begin{equation*}
(d_{\alpha}f)(X):=X(f)+2\pi i\alpha(X)f
\end{equation*}
for all vector fields $X$. We denote the operator $d_0$ by $d$.
\end{definition}

\begin{definition}
 We define a hermitian inner product on $\Lambda^0$ by 
\begin{equation*}
 (f,g) :=\int_G f(x)\overline{g(x)}dx
\end{equation*}
and denote the completion of $\Lambda^0$ with respect to this inner product by $K^0$.
\end{definition}

\begin{definition}
We define a hermitian inner product on $\Lambda^1$ by 
\begin{equation*}
 (\alpha,\beta) :=\int_{G}\alpha(X_1)\overline{\beta(X_1)}dx=\sum_{e\in E}\int_0^{L_e}\alpha_e(X_1|_e)\overline{\beta_e(X_1|_e)}dx
\end{equation*}
This is clearly independent of the choice of the auxiliary vector field $X_1$.
\end{definition}

We are now going to define the adjoint of $d_{\alpha}$. Formally it should satisfy
\begin{equation*}
 (d_{\alpha}^* \beta, f)=(\beta, d_{\alpha}f)
\end{equation*}
for all $f \in \Lambda^0$. We have\
\begin{eqnarray*}
 (\beta, d_{\alpha}f) &=& \int_G\beta(X_1)\overline{X_1(f)}dx - 2\pi i\int_G\beta(X_1)\overline{\alpha(X_1)f}dx \\
&=& -\int_G X_1(\beta(X_1))\overline{f}dx+\sum_{v\in V } \overline{f(v)}\sum_{e \sim v} \beta_e(\nu_{v,e})\\
&& -2\pi i\int_G\alpha(X_1)\beta(X_1)\overline{f}dx 
\end{eqnarray*}
where we used integration by parts. The sum term vanishes because of the boundary condition on $1$-forms.
So we find that $d^*_{\alpha}$ satisfies
\begin{equation*}
 d_{\alpha}^* \beta = - X_1(\beta(X_1))-2\pi i\alpha(X_1) \beta(X_1) = d^*\beta - 2\pi i \alpha(X_1)\beta(X_1)
\end{equation*}
which again is independent of the choice of $X_1$.

\begin{definition}
 For each edge $e \in E$ we define the Sobolov space $W_2(e)$ as the closure of $C^2([0,L_e])$ with respect to the norm 
$||f||_2^2:=\sum\limits_{j=0}^2 \int\limits_0^{L_e} |f^{(j)}(x)|^2dx$.
 \end{definition}

\begin{definition}
 We define a Schr\"odinger type operator 
\begin{equation*}
 \Delta_{\alpha}:=d_{\alpha}^* d_{\alpha}
\end{equation*}
 on $\Lambda^0$. We extend its domain to
\begin{eqnarray*}
 Dom(\Delta_{\alpha}):=\left\{ f \in K^0 \middle| \forall e \in E : f|_e \in W_2(e), \forall v\in V : \sum_{e \sim v}\nu_{v,e}(f|_e)=0   \right\}
\end{eqnarray*}
\end{definition}

\begin{theorem}
 \cite{GaveauOkada91}
We have $H^1(G,\C)=\Lambda^1 \slash d(\Lambda^0)$. 
Thus the definitions of $1$-forms and $0$-forms produce the expected deRham cohomology.
\end{theorem}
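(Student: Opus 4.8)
The plan is to realize the isomorphism through a period map. Since $G$ is a $1$-dimensional CW complex, $H_0(G,\Z)$ is free and hence $H^1(G,\C)\cong \mathrm{Hom}(H_1(G,\Z),\C)$ by the universal coefficient theorem; for $G$ connected this space has dimension $n=|E(G)|-|V|+1$, the first Betti number. I would define
\[
 P\colon \Lambda^1\longrightarrow \mathrm{Hom}(H_1(G,\Z),\C),\qquad P(\alpha)(c)=\sum_{e}c_e\int_e\alpha,
\]
for a $1$-cycle $c=\sum_e c_e\,e\in Z_1(G,\Z)=H_1(G,\Z)$, where $\int_e\alpha=\int_0^{L_e}\alpha_e(X_1)\,dx$ with respect to the chosen orientation of $e$. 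This is well defined and linear because a graph has no $2$-cells, so $Z_1=H_1$ and $P(\alpha)$ only depends on the homology class. The theorem follows once I show (i) $\ker P=d(\Lambda^0)$ and (ii) $P$ is surjective; then $P$ induces the desired isomorphism $\Lambda^1\slash d(\Lambda^0)\cong H^1(G,\C)$.

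For (i), the inclusion $d(\Lambda^0)\subseteq\ker P$ is immediate: a $0$-form is continuous and single valued on $G$, so $\int_\gamma df=0$ for every closed walk $\gamma$, and closed walks generate $Z_1(G,\Z)$. For the reverse inclusion I fix a basepoint $x_0$ and, given $\alpha\in\ker P$, set $f(x)=\int_{x_0}^{x}\alpha$, integrated along any path in $G$ from $x_0$ to $x$; the vanishing of all periods makes this independent of the path, so $f$ is a well-defined function that is smooth on each closed edge, continuous across vertices, and satisfies $df=\alpha$. What remains, and is the real content, is that $f$ lies in $\Lambda^0$: since $df=\alpha$ and $d=d_0$ satisfies $(df)(X)=X(f)$, one has $\nu_{v,e}(f|_e)=(df)_e(\nu_{v,e})=\alpha_e(\nu_{v,e})$ for every edge $e$ at a vertex $v$, so the Kirchhoff sum $\sum_{e\sim v}\nu_{v,e}(f|_e)=\sum_{e\sim v}\alpha_e(\nu_{v,e})$ vanishes precisely because $\alpha$ satisfies the boundary condition defining $\Lambda^1$. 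Hence $\alpha=df\in d(\Lambda^0)$.

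For (ii) I would locate a subspace of $\Lambda^1$ of the correct dimension on which $P$ is already injective. Let $\mathcal H\subseteq\Lambda^1$ consist of the $1$-forms $\alpha$ with $\alpha(X_1)$ constant on every edge. Writing $g_e=\alpha_e(X_1)\in\C$ and matching the outward normals $\nu_{v,e}$ with a chosen orientation of each edge, the vertex condition $\sum_{e\sim v}\alpha_e(\nu_{v,e})=0$ becomes exactly $(\partial g)(v)=0$ for the simplicial boundary map $\partial\colon C_1(G)\to C_0(G)$; thus $\alpha\mapsto(g_e)_e$ identifies $\mathcal H$ with $Z_1(G,\C)=H_1(G,\C)$, so $\dim\mathcal H=n$. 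On $\mathcal H$ the map $P$ is injective: if $\alpha\in\mathcal H$ has all periods zero, then by (i) $\alpha=df$ for some $f\in\Lambda^0$, and $d^*\alpha=-X_1(\alpha(X_1))=0$ because $\alpha(X_1)$ is edgewise constant, whence $\|\alpha\|^2=(\alpha,df)=(d^*\alpha,f)=0$ and $\alpha=0$. Since $\dim\mathcal H=n=\dim\mathrm{Hom}(H_1(G,\Z),\C)$, the restriction $P|_{\mathcal H}$ is an isomorphism, so $P$ is onto; this finishes (ii). (The companion statement for $0$-forms is the trivial remark that $\ker(d\colon\Lambda^0\to\Lambda^1)$ consists of the edgewise-constant continuous functions, which on a connected graph are just the constants, so $H^0$ also comes out right.)

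I expect the main technical care to fall in part (i): giving precise meaning to $\int_{x_0}^x\alpha$ on the quantum graph, checking path independence across vertices, and above all verifying that the analytic Kirchhoff condition for the primitive $f$ is exactly the differential-form boundary condition on $\alpha$ --- this is the step that ties the vertex conditions to the topology. The other piece of bookkeeping, the orientation-and-normal dictionary in (ii) that turns the vertex condition for edgewise-constant forms into the combinatorial cycle equation $\partial g=0$, is routine but must be carried out with consistent sign conventions.
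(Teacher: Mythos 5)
The paper offers no proof of this statement --- it is imported verbatim from \cite{GaveauOkada91} --- so there is no internal argument to compare yours against; judged on its own, your proof is correct and self-contained. The two load-bearing steps are exactly the right ones. In (i), the identity $\nu_{v,e}(f|_e)=(df)_e(\nu_{v,e})=\alpha_e(\nu_{v,e})$ is what ties the vertex condition defining $\Lambda^1$ to the Kirchhoff condition defining $\Lambda^0$: it shows both that the primitive of a period-free $1$-form lands in $\Lambda^0$ and, read backwards, that $d$ maps $\Lambda^0$ into $\Lambda^1$ at all, which the quotient $\Lambda^1 / d(\Lambda^0)$ tacitly requires. In (ii), your subspace $\mathcal H$ of edgewise-constant forms is precisely the space of harmonic $1$-forms that the paper later quotes from the same reference in Section 5; your dimension count $\dim\mathcal H=\dim Z_1(G,\C)=|E(G)|-|V|+1$ (valid because the paper's graphs are assumed connected) together with the injectivity computation $\|\alpha\|^2=(\alpha,df)=(d^*\alpha,f)=0$ in effect reproves both halves of the Hodge decomposition (existence and uniqueness of harmonic representatives), so your argument would also make Section 5 self-contained. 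Two minor points to tighten in a final write-up: path-independence of $\int_{x_0}^{x}\alpha$ should be phrased for walks, noting that backtracking segments cancel in the integral so that the difference of two paths integrates to a period over an honest class in $Z_1(G,\Z)$ even when $x$ lies in the interior of an edge; and the sign dictionary between the normals $\nu_{v,e}$ and the orientation defining $o(e)$ and $t(e)$ must be fixed once and for all so that the vertex condition for $\mathcal H$ really reads $\partial g=0$ rather than a mixed-sign variant.
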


\begin{proposition}
\label{1form_invariance}
 Let  $\alpha \in \Lambda^1$ and $\psi \in \Lambda^0$ be real and let $\beta=\alpha+d\psi$. Let $f$ be an 
eigenfunction of $\Delta_{\alpha}$ with eigenvalue $\lambda$. Then $e^{-2\pi i\psi}f$ is an eigenfunction of $\Delta_{\beta}$ with the same 
eigenvalue. That is two forms that differ by an exact $1$-form have the same spectrum.
\end{proposition}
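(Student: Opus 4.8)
The plan is to verify directly that conjugation by the function $e^{-2\pi i\psi}$ intertwines the operators $d_\alpha$ and $d_\beta$, and then transport this through the adjoint and the composition $\Delta_\alpha = d_\alpha^* d_\alpha$. First I would compute, for an arbitrary vector field $X$, the quantity $d_\beta(e^{-2\pi i\psi}f)(X)$. Using the product rule $X(e^{-2\pi i\psi}f) = e^{-2\pi i\psi}X(f) - 2\pi i (X\psi) e^{-2\pi i\psi}f$ together with $\beta = \alpha + d\psi$ and $(d\psi)(X) = X(\psi)$, the two terms involving $X(\psi)$ cancel, giving
\begin{equation*}
d_\beta(e^{-2\pi i\psi}f)(X) = e^{-2\pi i\psi}\bigl(X(f) + 2\pi i\,\alpha(X)f\bigr) = e^{-2\pi i\psi}(d_\alpha f)(X).
\end{equation*}
So $d_\beta \circ M = M \circ d_\alpha$ where $M$ denotes multiplication by $e^{-2\pi i\psi}$.

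Next I would check that $M$ preserves the relevant domains. Since $\psi$ is real, $|e^{-2\pi i\psi}|=1$, so $M$ is unitary on $K^0$ and on $\Lambda^1$ (it commutes with the pointwise inner product against $X_1$), and because $\psi$ is continuous and smooth on edges, $M$ preserves continuity, the Sobolev conditions $f|_e \in W_2(e)$, and — using the identity above applied to $X = \nu_{v,e}$ — it maps the Kirchhoff condition for $\Delta_\alpha$ to the Kirchhoff condition for $\Delta_\beta$. Likewise $M$ maps $\Lambda^1$ into $\Lambda^1$: the $1$-form boundary condition $\sum_{e\sim v}(M\alpha)_e(\nu_{v,e}) = e^{-2\pi i\psi(v)}\sum_{e\sim v}\alpha_e(\nu_{v,e}) = 0$ holds since $\psi$ is single-valued at $v$. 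Here one should be slightly careful: $M$ multiplies $1$-forms pointwise, and one wants $M d_\alpha f = d_\beta M f$ as genuine elements of $\Lambda^1$, which is exactly the identity derived above.

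Then, from $d_\beta M = M d_\alpha$ and the unitarity of $M$ on both $K^0$ and $\Lambda^1$, I would deduce $d_\beta^* = M d_\alpha^* M^{-1}$ by taking adjoints: for $g\in\Lambda^0$, $\beta'\in\Lambda^1$ one has $(d_\beta^*\beta', g) = (\beta', d_\beta g) = (\beta', M d_\alpha M^{-1} g) = (M^{-1}\beta', d_\alpha M^{-1} g) = (M d_\alpha^* M^{-1}\beta', g)$. Alternatively one can just quote the explicit formula $d_\alpha^*\beta' = d^*\beta' - 2\pi i\,\alpha(X_1)\beta'(X_1)$ derived above and check the conjugation identity by the same cancellation as in the first step. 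Combining, $\Delta_\beta = d_\beta^* d_\beta = M d_\alpha^* M^{-1} M d_\alpha M^{-1} = M \Delta_\alpha M^{-1}$. Hence if $\Delta_\alpha f = \lambda f$ then $\Delta_\beta(Mf) = M\Delta_\alpha f = \lambda Mf$, and $Mf \neq 0$ since $M$ is unitary, so $Mf = e^{-2\pi i\psi}f$ is an eigenfunction of $\Delta_\beta$ with eigenvalue $\lambda$; since $M$ is invertible this gives a bijection of eigenspaces and the spectra coincide. The only real subtlety — and the one place I would be careful — is the domain/boundary-condition bookkeeping at the vertices: making sure that $\psi$ being continuous (a genuine $0$-form) is exactly what is needed for $M$ to carry the Kirchhoff and $1$-form vertex conditions for $\alpha$ onto those for $\beta$; everything else is the elementary product-rule cancellation.
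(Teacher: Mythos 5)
Your proposal is correct and follows essentially the same route as the paper: the paper's proof is exactly the intertwining computation $d_\beta(e^{-2\pi i\psi}f)=e^{-2\pi i\psi}d_\alpha f$ followed by $d_\beta^*(e^{-2\pi i\psi}\omega)=e^{-2\pi i\psi}d_\alpha^*\omega$ via the explicit adjoint formula, giving $\Delta_\beta=M\Delta_\alpha M^{-1}$. Your additional bookkeeping about $M$ preserving the vertex conditions and domains is a welcome refinement of what the paper leaves implicit, not a different argument.
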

\begin{proof}
We have
\begin{eqnarray*}
&& d_{\beta}^*d_{\beta}\left(e^{-2\pi i\psi}f\right)\\
&=& d_{\beta}^*\left( d(e^{-2\pi i\psi}f)+2\pi i e^{-2\pi i\psi}f\alpha + 2\pi i e^{-2\pi i\psi}fd\psi \right)\\
&=& d_{\beta}^*\left( e^{-2\pi i\psi}df+2\pi i e^{-2\pi i\psi}f\alpha \right)\\
&=& d_{\beta}^* \left(e^{-2\pi i\psi}d_{\alpha}f\right) \\
&=& d^*\left(e^{-2\pi i\psi}d_{\alpha}f\right) - 2\pi i\alpha(X_1) e^{-2\pi i\psi}d_{\alpha}f(X_1) - 2\pi i d\psi(X_1) e^{-2\pi i\psi}d_{\alpha}f(X_1)\\
&=& e^{-2\pi i\psi}d^*d_{\alpha}f -2\pi i\alpha(X_1) e^{-2\pi i\psi}d_{\alpha}f(X_1) \\
&=& e^{-2\pi i\psi}d_{\alpha}^*d_{\alpha}f
\end{eqnarray*}
Thus $f$ is an eigenfunction for $\Delta_{\alpha}$ if and only if $e^{-2\pi i\psi}f$ is an eigenfunction for $\Delta_{\beta}$ with the same eigenvalue.
\end{proof}

\begin{remark}
 Note that $Spec_{\alpha}(G)$ depends only on the coset of $[\alpha]$ in $H^1_{dR}(G,\R) \slash H^1_{dR}(G,\Z)$. 
\end{remark}

\begin{definition}
Let $Spec_{\alpha}(G)$ be the spectrum of the operator $\Delta_{\alpha}$. 
 We define the Bloch spectrum $Spec_{Bl}(G)$ of a quantum graph to be the map that associates to each $[\alpha]$ the 
spectrum $Spec_{\alpha}(G)$ where $[\alpha] \in H^1_{dR}(G,\R) \slash H^1_{dR}(G,\Z)$.

Note that we assume that we only know $H^1_{dR}(G,\R) \slash H^1_{dR}(G,\Z)$ as an abstract torus without any Riemannian structure.
\end{definition}

\begin{definition}
 We say that two quantum graphs $G$ and $G'$ are Bloch isospectral if there is a Lie group isomorphism  
$\Phi: H^1_{dR}(G,\R)/H^1_{dR}(G,\Z)  \ra H^1_{dR}(G',\R)/ H^1_{dR}(G',\Z)$ 
such that $Spec_{\alpha}(G)=Spec_{\Phi(\alpha)}(G')$ for all $[\alpha] \in H^1_{dR}(G,\R)/H^1_{dR}(G,\Z)$.
\end{definition}

Note that  if $G$ is a tree graph the entire Bloch spectrum just consists of the spectrum of the Laplacian $\Delta_0$
and thus does not contain any additional information.

\section{The Bloch spectrum via characters}

In this chapter we will introduce the Bloch spectrum using characters of the first fundamental group and show that the two notions 
are equivalent.

Let $\tilde{G}$ be the universal cover of $G$ and let $\pi_1(G)$ denote the fundamental group. Then $\pi_1(G)$ acts by deck transformations 
 on $\tilde{G}$. Let $\chi: \pi_1(G) \ra \C^*$ be a character of $\pi_1(G)$.

We will study functions $\tilde{f}: \tilde{G} \ra \C$ that are continuous and satisfy Kirchhoff boundary conditions at the vertices 
and that obey the transformation law
\begin{equation*}
\tilde{f}(\gamma x)=\chi(\gamma) \tilde{f}(x) 
\end{equation*}
for all $x\in \tilde{G}$ and $\gamma \in \pi_1(G)$. We refer to the space of these functions as $\Lambda^0_{\chi}(\tilde{G})$.

We associate to the character $\chi$ the spectrum of the standard Laplacian $d^*d$ on $\tilde{G}$ restricted to functions in 
$\Lambda^0_{\chi}(\tilde{G})$, we will denote it by $Spec(G,\chi)$.

\begin{definition}
We call the map that associates to each character $\chi$ of $\pi_1(G)$ the spectrum $Spec(G, \chi)$ the $\pi_1$-spectrum of $G$.
\end{definition}

\begin{theorem}
The Bloch spectrum $Spec_{Bl}(G)$ and the $\pi_1$-spectrum of a graph are equal.
There is a one-to-one correspondence $[\alpha] \mapsto \chi_{\alpha}$ between $H^1_{dR}(G,\R) \slash H^1_{dR}(G,\Z)$ and the set of characters 
of $\pi_1(G)$. It is given by 
\begin{equation*}
 \chi_{\alpha}(\gamma)=e^{-2\pi i\int_{\gamma}\alpha}
\end{equation*}
It induces the equality  $Spec(G, \chi_{\alpha})=Spec_{\alpha}(G)$.
\end{theorem}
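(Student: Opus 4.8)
The plan is to construct the bijection $[\alpha]\mapsto\chi_\alpha$ explicitly, check it is well-defined and bijective, and then exhibit an explicit unitary intertwiner between the two eigenvalue problems. First I would verify that $\chi_\alpha(\gamma)=e^{-2\pi i\int_\gamma\alpha}$ is well-defined on homotopy (equivalently homology) classes: the integral of a closed $1$-form over a loop depends only on its class in $H_1(G,\Z)$, so $\chi_\alpha$ is a genuine character of $\pi_1(G)$, and indeed factors through $H_1(G,\Z)$ since $\C^*$ is abelian. Next, changing $\alpha$ by an exact form $d\psi$ changes $\int_\gamma\alpha$ by $\int_\gamma d\psi=0$ for any loop $\gamma$, so the map descends to $H^1_{dR}(G,\R)$; and changing $[\alpha]$ by an integral class changes each $\int_\gamma\alpha$ by an integer, leaving $e^{-2\pi i\int_\gamma\alpha}$ unchanged. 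Hence the map is well-defined on $H^1_{dR}(G,\R)/H^1_{dR}(G,\Z)$. For bijectivity I would note that $\mathrm{Hom}(\pi_1(G),\C^*)=\mathrm{Hom}(H_1(G,\Z),\C^*)\cong (\C^*)^n$ where $n$ is the first Betti number, while $H^1_{dR}(G,\R)/H^1_{dR}(G,\Z)\cong\R^n/\Z^n$; the map above is exactly the standard identification of the real torus with the connected group of unitary characters, and since $\pi_1$ of a graph is free, $H_1$ is torsion-free, so every character is unitary and the map is onto as well as injective.

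The analytic heart is the identification $Spec(G,\chi_\alpha)=Spec_\alpha(G)$. I would pick a closed $1$-form $\alpha$ representing the class and, on the universal cover $\tilde G$ (which is a tree, hence simply connected), write $\pi^*\alpha=d\tilde\psi$ for a smooth function $\tilde\psi:\tilde G\to\R$, where $\pi:\tilde G\to G$ is the covering map. The deck-transformation behaviour of $\tilde\psi$ is $\tilde\psi(\gamma x)=\tilde\psi(x)+\int_\gamma\alpha$ (up to an additive constant), because $d(\tilde\psi\circ\gamma-\tilde\psi)=\gamma^*d\tilde\psi-d\tilde\psi=\pi^*\alpha-\pi^*\alpha=0$ and the value of the period is read off by integrating along a path from $x$ to $\gamma x$. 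Now I claim the map $f\mapsto e^{2\pi i\tilde\psi}(\pi^*f)$ is a unitary isomorphism from the eigenspace of $\Delta_\alpha$ on $G$ with eigenvalue $\lambda$ onto the $\lambda$-eigenspace of $d^*d$ on $\tilde G$ restricted to $\Lambda^0_{\chi_\alpha}(\tilde G)$. The computation that $d^*d\,(e^{2\pi i\tilde\psi}\pi^*f)=e^{2\pi i\tilde\psi}\pi^*(\Delta_\alpha f)$ is exactly the local computation already carried out in Proposition~\ref{1form_invariance} (the gauge transformation $e^{-2\pi i\psi}$ there, applied here downstairs-to-upstairs with $\psi=-\tilde\psi$), so I would simply invoke that local identity on each edge; it is a purely pointwise statement and lifts to the cover without change. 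Continuity and the Kirchhoff condition are preserved since $e^{2\pi i\tilde\psi}$ is continuous and, being a unit-modulus function that is smooth on edges, multiplication by it commutes with the vertex conditions after accounting for $\tilde\psi$ being continuous at vertices. Finally, for $\gamma\in\pi_1(G)$ one checks the transformation law: $e^{2\pi i\tilde\psi(\gamma x)}(\pi^* f)(\gamma x)=e^{2\pi i(\tilde\psi(x)+\int_\gamma\alpha)}(\pi^*f)(x)=e^{2\pi i\int_\gamma\alpha}\cdot e^{2\pi i\tilde\psi(x)}(\pi^*f)(x)$, and one must reconcile the sign so that this equals $\chi_\alpha(\gamma)$ times the original value; this fixes the sign convention $\chi_\alpha(\gamma)=e^{-2\pi i\int_\gamma\alpha}$ (or, equivalently, the sign in the exponent of the intertwiner). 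Since the correspondence $f\mapsto e^{2\pi i\tilde\psi}\pi^*f$ is invertible with inverse built from $e^{-2\pi i\tilde\psi}$ and descent, and it preserves the $L^2$ norms over a fundamental domain, the two spectra agree with multiplicities.

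I expect the main obstacle to be bookkeeping rather than any deep difficulty: specifically, (i) getting the two sign conventions consistent so that the stated formula $\chi_\alpha(\gamma)=e^{-2\pi i\int_\gamma\alpha}$ comes out rather than its conjugate, and (ii) being careful that the intertwiner genuinely maps the domain $Dom(\Delta_\alpha)$ (with its $W_2$-regularity and Kirchhoff conditions) onto the corresponding domain upstairs, i.e.\ that $e^{2\pi i\tilde\psi}$ is smooth enough on each edge (it is, since $\alpha$ may be taken smooth, indeed harmonic, on each edge) and that the vertex conditions transform correctly. A secondary subtlety worth a sentence is the surjectivity of $[\alpha]\mapsto\chi_\alpha$: this uses that a graph is homotopy equivalent to a wedge of circles, so $\pi_1(G)$ is free and $H_1(G,\Z)\cong\Z^n$ is free abelian, whence every homomorphism to $\C^*$ is conjugate under the torus action to a unitary one and arises from a real harmonic form. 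Once these points are pinned down the equality $Spec_{Bl}(G)=Spec(G,\chi)$ of the two maps follows immediately from the pointwise-defined correspondence.
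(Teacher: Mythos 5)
Your proposal follows essentially the same route as the paper: lift to the universal cover, write the pullback of $\alpha$ as $d\tilde\varphi$ using simple connectivity, gauge-transform by the resulting unit-modulus factor, invoke Proposition \ref{1form_invariance} for the operator identity, and verify the equivariance law via Stokes' theorem. The only difference is cosmetic --- the paper uses the intertwiner $e^{-2\pi i\tilde\varphi}\tilde f$ (which produces the stated sign $\chi_\alpha(\gamma)=e^{-2\pi i\int_\gamma\alpha}$ directly), exactly the sign reconciliation you flag --- and you additionally spell out bijectivity and unitarity, which the paper leaves implicit.
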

\begin{proof}
The integral does not depend on either the representative in $\pi_1(G)$ nor on the one in $H^1_{dR}(G,\R)$ so this gives a well defined map. 
We also have $\chi_{\alpha}(\gamma_1 \cdot \gamma_2)=\chi_{\alpha}(\gamma_1)\chi_{\alpha}(\gamma_2)$ so this defines a character.

Let $f: G \ra \C$ and let $\tilde{f} :\tilde{G} \ra \C$ be the lift of $f$. Let $\tilde{\alpha}$ 
be the pullback of $\alpha$. 
As $H^1(\tilde{G})$ is trivial $\tilde{\alpha}$ is exact and there exists a function $\tilde{\varphi}: \tilde{G} \ra \C$ such that 
$\tilde{\alpha}=d\tilde{\varphi}$.

Let $\tilde{g}(x):=e^{-2\pi i\tilde{\varphi}(x)}\tilde{f}(x)$. We claim that $\tilde{g}$ is an eigenfunction in the $\pi_1$-spectrum if and only 
if $\Delta_{\alpha} f=\lambda f$. We need to show that $\tilde{g} \in \Lambda^0_{\chi_{\alpha}}(\tilde{G})$ and that 
$\Delta \tilde{g}= \lambda \tilde{g}$.

Let $\gamma \in \pi_1(G)$ and let $\tilde{\gamma}$ be the (unique) path in $\tilde{G}$ from $x$ to $\gamma x$. We have 
$\tilde{\varphi}(\gamma x)-\tilde{\varphi}(x)=\int_{\tilde{\gamma}}d\tilde{\varphi}$ 
by Stokes theorem. So we get 
\begin{equation*}
 \tilde{g}(\gamma x)=e^{-2\pi i\tilde{\varphi}(\gamma x)}\tilde{f}(\gamma x)=
e^{-2\pi i\int_{\tilde{\gamma}}\tilde{\alpha}}e^{-2\pi i\tilde{\varphi}(x)}\tilde{f}(x)= \chi_{\alpha}(\gamma)\tilde{g}(x)
\end{equation*}
By proposition \ref{1form_invariance} we have\\
\begin{equation*}
\Delta \tilde{g} = \Delta e^{-2\pi i\tilde{\varphi}}\tilde{f} = e^{-2\pi i\tilde{\varphi}}\Delta_{\tilde{\alpha}} \tilde{f}
\end{equation*}
Thus $\tilde{g}$ is an eigenfunction with eigenvalue $\lambda$ if and only if $f$ is.
\end{proof}

\begin{remark}
This theorem mirrors a similar result for tori, see \cite{Guillemin90}. 
\end{remark}

\section{The Albanese torus}

\begin{definition}
We call a $1$-form $\alpha$ harmonic if $d^*\alpha \in \Lambda^0$ and $dd^*\alpha = 0$. 
\end{definition}

\begin{lemma}
\cite{GaveauOkada91}
A $1$-form $\alpha$ is harmonic if and only if $\alpha(X_1)$ is constant on all edges where $X_1$ is the auxiliary vector field of length one.
\end{lemma}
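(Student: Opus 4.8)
The plan is to characterize harmonic $1$-forms by a local computation on each edge combined with the boundary conditions, and to recognize that the only constraint the harmonicity equation imposes on an edge is that $\alpha(X_1)$ be constant there. First I would unpack the definition: on a single edge $e$, parametrized by arclength so that $X_1|_e = \partial/\partial x$, a $1$-form $\alpha$ is $\alpha_e = \alpha_e(X_1)\,dx$, and the function $a_e := \alpha_e(X_1)$ is the only datum on $e$. From the formula for the adjoint derived in the text, $d^*\alpha = -X_1(\alpha(X_1)) = -a_e'$ on the interior of $e$. Then $d^*\alpha$ is a function (a $0$-form candidate), and $dd^*\alpha = X_1(d^*\alpha) = -a_e''$ on each edge. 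So the condition $dd^*\alpha = 0$ on each edge is exactly $a_e'' = 0$, i.e.\ $a_e$ is affine in $x$ on each edge.

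The next step is to bring in the two remaining requirements hidden in the definition: that $d^*\alpha \in \Lambda^0$, and implicitly that $\alpha \in \Lambda^1$, i.e.\ $\alpha$ satisfies the $1$-form vertex condition $\sum_{e\sim v}\alpha_e(\nu_{v,e}) = 0$. For the forward direction, suppose $\alpha$ is harmonic. On each edge $a_e$ is affine. For $d^*\alpha = -a_e'$ to lie in $\Lambda^0$ it must in particular be continuous across vertices and satisfy the Kirchhoff condition $\sum_{e\sim v}\nu_{v,e}(d^*\alpha|_e) = 0$ at every vertex. But $d^*\alpha|_e = -a_e'$ is a \emph{constant} on each edge (since $a_e$ is affine), so $\nu_{v,e}(d^*\alpha|_e) = \pm a_e'$ depending on orientation; writing $g := d^*\alpha$, continuity of a piecewise-constant function forces $g$ to be constant on each block, and the Kirchhoff sum condition then pins down these constants. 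The cleanest argument: integrate. Since $g = d^*\alpha$ and $d^*$ is the adjoint of $d$ on the trivial cohomology class, $g$ is $L^2$-orthogonal to constants, i.e.\ $\int_G g = (\,\alpha, d\mathbf{1}) = (\alpha, 0) = 0$. On the other hand $g$ is constant on each edge with value $-a_e'$, and one can sum these constants weighted by edge lengths. To conclude $g \equiv 0$ I would instead argue directly: $0 = (dd^*\alpha, d^*\alpha)$-type manipulation is circular, so better to use that $dd^*\alpha = 0$ means $d^*\alpha$ is a harmonic $0$-form, and harmonic $0$-forms on a finite graph are constant (maximum principle on each edge plus Kirchhoff conditions), hence $g$ is a global constant; being orthogonal to constants it is $0$. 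Therefore $a_e' = 0$ on every edge, i.e.\ $\alpha(X_1)$ is constant on all edges.

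For the converse, if $a_e := \alpha(X_1)|_e$ is constant on every edge, then $d^*\alpha = -a_e' = 0$, which is certainly in $\Lambda^0$ (the zero function satisfies continuity and Kirchhoff trivially), and $dd^*\alpha = d(0) = 0$, so $\alpha$ is harmonic by definition. (One should note the standing assumption that $\alpha \in \Lambda^1$, so the $1$-form vertex condition holds throughout; it is not needed for this direction but is part of what ``$1$-form'' means.)

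The main obstacle is the forward direction's final step: deducing from ``$d^*\alpha$ is affine on each edge and $dd^*\alpha=0$ and $d^*\alpha\in\Lambda^0$'' that $d^*\alpha$ is actually identically zero rather than merely a nonzero constant. The resolution is the standard fact that a harmonic $0$-form on a connected quantum graph with Kirchhoff conditions is constant, together with the observation that $d^*\alpha \perp \ker d = \{\text{constants}\}$ in $K^0$ (since $d^*\alpha$ is by construction in the image of $d^*$, which is orthogonal to $\ker d$). I expect this to be a short argument once the edgewise computation $d^*\alpha|_e = -a_e'$ and $dd^*\alpha|_e = -a_e''$ is in place; everything else is bookkeeping with the already-derived formula for $d^*_\alpha$ specialized to $\alpha = 0$ in the leading term.
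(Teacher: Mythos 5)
The paper does not prove this lemma at all --- it is quoted from \cite{GaveauOkada91} without proof --- so there is no in-paper argument to compare against. Your proof is correct and is essentially the expected one. The edgewise computation $d^*\alpha|_e=-a_e'$, $dd^*\alpha|_e=-a_e''$ is right, the converse is immediate, and the forward direction is correctly closed by the two facts that (i) $dd^*\alpha=0$ makes $g:=d^*\alpha$ constant on each edge, hence (by the continuity built into $g\in\Lambda^0$ and connectedness) a global constant, and (ii) $\int_G g=(d^*\alpha,\mathbf{1})=(\alpha,d\mathbf{1})=0$, which kills that constant; note that this last identity is exactly where the $1$-form vertex condition $\sum_{e\sim v}\alpha_e(\nu_{v,e})=0$ enters, via the vanishing of the boundary terms in the integration by parts, so it is in fact needed in the forward direction and not only ``part of what $1$-form means.'' One small inaccuracy: your remark that ``the Kirchhoff sum condition then pins down these constants'' is wrong --- for a function that is constant on each edge the Kirchhoff condition $\sum_{e\sim v}\nu_{v,e}(g|_e)=0$ is vacuously satisfied and gives no information --- but your argument does not actually rely on it, since the orthogonality-to-constants step does the work.
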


\begin{lemma}
\cite{GaveauOkada91} 
Any $\beta \in \Lambda^1$ admits a unique Hodge decomposition of the form
\begin{equation*}
 \beta=d\psi+\tilde{\beta}
\end{equation*}
where $\psi \in \Lambda^0$ and $\tilde{\beta}$ is harmonic.

Thus each cohomology class has exactly one harmonic representative.

If $\beta$ is real then so are $\psi$ and $\tilde{\beta}$.
\end{lemma}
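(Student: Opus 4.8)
The plan is to treat this as Hodge theory for the de~Rham complex $d\colon \Lambda^0 \to \Lambda^1$, reducing the construction of the harmonic part to a finite-dimensional linear-algebra problem on the underlying combinatorial graph. First I would reformulate harmonicity in the most convenient way: by the preceding lemma, $\tilde\beta \in \Lambda^1$ is harmonic if and only if $\tilde\beta(X_1)$ is constant on each edge, which is the same as $d^*\tilde\beta = 0$. So the goal is, given $\beta \in \Lambda^1$, to produce $\psi \in \Lambda^0$ with $d^*(\beta - d\psi) = 0$, i.e. $\Delta_0\psi = d^*\beta$, and then set $\tilde\beta := \beta - d\psi$.

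For existence I would argue edge by edge. Writing $b_e(x) := \beta_e(X_1|_e)$ in the arclength coordinate on $e$, the requirement that $(\beta - d\psi)(X_1) = b_e - \psi_e'$ be a constant $c_e$ on $e$ forces $\psi_e(x) = \phi_{v_0(e)} + \int_0^x b_e(t)\,dt - c_e x$, and evaluating at the other endpoint expresses $c_e$ through the two vertex values $\phi_v$ via $c_e L_e = \int_0^{L_e} b_e - (\phi_{v_1(e)} - \phi_{v_0(e)})$. Continuity of $\psi$ at the vertices is then automatic, so the only remaining conditions are the Kirchhoff conditions; a short computation using the defining relation $\sum_{e\sim v}\beta_e(\nu_{v,e}) = 0$ of $\Lambda^1$ collapses them to $\sum_{e\sim v}\varepsilon_{v,e}\,c_e = 0$ for every $v$, where $\varepsilon_{v,e} = \pm 1$ records the orientation of $e$ at $v$. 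Substituting the expression for $c_e$ turns this into a linear system $\mathcal L\phi = b$, with $\mathcal L$ the weighted combinatorial graph Laplacian with edge weights $1/L_e$ and $b$ assembled from the numbers $\frac1{L_e}\int_0^{L_e} b_e$. Since $G$ is connected, $\mathcal L$ is symmetric positive semidefinite with kernel exactly the constant vectors, and $b$ is orthogonal to the constants because each edge contributes $\varepsilon_{v_0(e),e} + \varepsilon_{v_1(e),e} = 0$; hence the system is solvable. The $\psi$ built from any solution is visibly $C^\infty$ on each edge, continuous, and satisfies Kirchhoff, so $\psi \in \Lambda^0$ and $\tilde\beta = \beta - d\psi$ is harmonic. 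If $\beta$ is real, then $b_e$, $b$, $\phi$, $\psi$, and hence $\tilde\beta$ are all real.

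For uniqueness it is enough to check that a $1$-form that is simultaneously exact and harmonic must vanish: if $d\psi$ is harmonic then $d^*d\psi = 0$, so $\|d\psi\|^2 = (d^*d\psi,\psi) = 0$ (equivalently, the same Laplacian computation forces $\phi$ to be constant). Hence $\tilde\beta$ is uniquely determined and $\psi$ is unique up to an additive constant; and since two cohomologous harmonic forms differ by a form that is both exact and harmonic, every class of $H^1(G,\C) = \Lambda^1/d(\Lambda^0)$ has exactly one harmonic representative.

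I expect the main obstacle to be the bookkeeping in the existence step --- keeping the orientation signs $\varepsilon_{v,e}$ straight (especially for loops and multiple edges) and checking that the vertex equations really form the weighted graph Laplacian with right-hand side in its range. Everything else is either the definition of harmonicity or standard; alternatively, one could bypass the explicit construction and solve $\Delta_0\psi = d^*\beta$ abstractly, using that $\Delta_0$ is self-adjoint with discrete spectrum and with kernel the constant functions, that $d^*\beta$ lies in its range because $(d^*\beta, 1) = (\beta, d1) = 0$, and elliptic regularity ($\psi'' = -d^*\beta$, smooth on each edge) to conclude $\psi \in \Lambda^0$.
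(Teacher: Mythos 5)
Your proof is correct. Note, however, that the paper does not prove this lemma at all --- it is quoted from \cite{GaveauOkada91} --- so there is no in-paper argument to compare against; what you have supplied is a self-contained proof that the paper outsources. Your reduction is the natural one: using the characterization of harmonicity as ``$\tilde\beta(X_1)$ constant on each edge,'' the existence of the decomposition becomes the solvability of a discrete Poisson problem $\mathcal L\phi=b$ for the vertex values of $\psi$, with $\mathcal L$ the $1/L_e$-weighted combinatorial Laplacian; your verification that the Kirchhoff condition for $\psi$ collapses (via the vertex condition defining $\Lambda^1$) to $\sum_{e\sim v}\varepsilon_{v,e}c_e=0$, and that the right-hand side is orthogonal to constants, is exactly what is needed, and loops and multiple edges cause no trouble (a loop contributes cancelling terms at its single vertex and its $c_e$ is determined independently of $\phi$). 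The uniqueness step via $\|d\psi\|^2=(d^*d\psi,\psi)=0$ for an exact harmonic form is the standard integration-by-parts argument and legitimately uses the adjoint identity derived in the paper, whose boundary term vanishes precisely because $d\psi\in\Lambda^1$. Two small points of precision: uniqueness holds for the pair $(d\psi,\tilde\beta)$, with $\psi$ itself determined only up to an additive constant on a connected graph (you say this); and your ``abstract'' alternative via $\Delta_0\psi=d^*\beta$ needs a word about why the weak solution is continuous and smooth on edges, which your explicit edge-by-edge construction sidesteps entirely --- so the explicit version is the safer of your two routes.
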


\begin{definition}
 We define an inner product on $H^1(G,\R)$ by 
\begin{equation*}
 ([\alpha],[\beta]):=(\tilde{\alpha}, \tilde{\beta})
\end{equation*}
where $\tilde{\alpha}$ and $\tilde{\beta}$ are the unique harmonic representatives of $[\alpha]$ and $[\beta]$.
\end{definition}

Kotani and Sunada defined the notion of the Albanese torus of a combinatorial graph in \cite{KotaniSunada00}. We will generalize this to 
quantum graphs. If the quantum graph is equilateral our definition recovers theirs.

Let $or(E)$ be the set of oriented edges, we call an element $b\in or(E)$ a bond. Let $\overline{b}$ denote a reversal of orientation. Let $o(b)$ and $t(b)$ be the 
origin and terminal vertex of a bond $b$.

Let $A$ be an abelian group, the 
coefficients of our homology. Let $C_0(G,A)$ be the free $A$-module with generators in $V$. Let $C_1(G,A)$ be the $A$-module generated by 
$or(E)$ modulo the relation $\overline{b}=-b$.
The boundary map $\partial: C_1(G,A) \ra C_0(G,A)$ is defined by $\partial (b):=t(b)-o(b)$ and linearity. We then have $H_1(G,A)=ker(\partial)$.

We have the natural pairing $([\alpha], [p]) \mapsto \int_p \alpha$ for any $[\alpha] \in H^1(G,\R)$ and $[p] \in H_1(G,\R)$. 
This makes these two spaces dual to each other and induces an inner product on $H_1(G,\R)$.

\begin{lemma}
\label{inner_product}
 This inner product is equivalent to the one we get on $H_1(G,\R)$ as a subspace of $C_1(G,\R)$ with the inner 
product given by 
\begin{equation*}
 \label{cases}
e\cdot e' = \cases{l(e) & $e=e'$\\
		 -l(e) &$ e=\overline{e'}$\\
		0 & $ $otherwise}
\end{equation*}
on edges and bilinear extension.
\end{lemma}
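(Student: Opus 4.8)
The claim is that two natural inner products on $H_1(G,\R)$ agree: the one pulled back from the harmonic-representative inner product on $H^1(G,\R)$ via the duality pairing $([\alpha],[p])\mapsto \int_p\alpha$, and the one obtained by viewing $H_1(G,\R)\subseteq C_1(G,\R)$ as a subspace with the edge inner product $e\cdot e'$ weighted by edge lengths. My plan is to exhibit a single map that intertwines both structures, namely the map sending a homology class to its harmonic representative as a $1$-form, and to recognize that this is (up to the obvious identification of a $1$-cochain with a vector of edge-values) exactly the inclusion $H_1(G,\R)\hookrightarrow C_1(G,\R)$.

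First I would set up the identification. Given $[p]\in H_1(G,\R)$, write $p=\sum_e c_e\, e$ as a cycle in $C_1(G,\R)$; the coefficients $c_e$ are the net number of times $p$ traverses $e$. To $[p]$ associate the harmonic $1$-form $\omega_p$ characterized (via the inner product on $H^1$ and the duality) by $\int_q \omega_p = ([p],[q])_{C_1}$ for all cycles $q$, i.e. $\omega_p$ is the harmonic form dual to $[p]$ under the edge-length inner product. By the lemma of Gaveau–Okada quoted above, a harmonic $1$-form has $\omega_p(X_1)$ constant on each edge $e$; call this constant $a_e$. The key computation is to show $a_e = c_e$ for every edge $e$, with appropriate orientation conventions. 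This is where the length weights enter: one checks that $\omega_p$, being harmonic, is the orthogonal projection (in the $\Lambda^1$ inner product, which integrates $\alpha(X_1)\overline{\beta(X_1)}$ over $G$ and hence weights edge $e$ by $L_e$) of the "combinatorial" form whose value on $e$ is $c_e$, and that this combinatorial form is already harmonic because $p$ is a cycle (the Kirchhoff/boundary condition for $1$-forms at a vertex is precisely $\sum_{e\sim v}\alpha_e(\nu_{v,e})=0$, which for the constant-value form translates to the cycle condition $\partial p = 0$). Hence no projection is needed and $a_e=c_e$.

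With that identification in hand the equality of inner products is immediate: for cycles $p=\sum c_e e$ and $q=\sum c'_e e$,
\begin{equation*}
([p],[q])_{H^1\text{-dual}} = (\omega_p,\omega_q)_{\Lambda^1} = \sum_{e\in E}\int_0^{L_e} a_e \overline{a'_e}\,dx = \sum_{e\in E} L_e\, c_e c'_e,
\end{equation*}
which is exactly $p\cdot q$ under the edge inner product of the lemma, since the diagonal term contributes $l(e)=L_e$ per unit coefficient, the reversed-orientation term contributes $-l(e)$, and distinct unoriented edges are orthogonal. So both inner products are given by the same bilinear form on cycles, and since cycles span $H_1(G,\R)$ (Lemma~\ref{basis_of_cycles}) they coincide on all of $H_1(G,\R)$.

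The main obstacle, and the step deserving the most care, is the identification $a_e = c_e$ — i.e. proving that the constant value of the harmonic representative on an edge equals the integer traversal coefficient of the cycle. This requires being precise about three things at once: the orientation conventions relating bonds $b\in or(E)$ to the auxiliary field $X_1$ and the normals $\nu_{v,e}$; the fact that the "combinatorial" $1$-form with constant values $c_e$ genuinely lies in $\Lambda^1$ (its vertex boundary condition is the cycle condition); and that it represents the correct cohomology class, i.e. that $\int_q$ of it equals the combinatorial intersection-type pairing $\sum_e L_e c_e c'_e$ divided appropriately — here one must be careful that the duality pairing $\int_p\alpha$ is length-independent while the induced inner product is not, so the length weights must all come from the $\Lambda^1$ inner product and none from the pairing. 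Once these bookkeeping points are nailed down the rest is the short display above.
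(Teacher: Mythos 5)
The paper states Lemma~\ref{inner_product} without proof, so there is nothing to compare against; your argument is correct and is the natural one. The substance is exactly right: the vertex condition $\sum_{e\sim v}\alpha_e(\nu_{v,e})=0$ for the edgewise-constant form with values $c_e$ is precisely $\partial p=0$, so the harmonic representative dual to a cycle $p=\sum c_e e$ has constant value $c_e$ on $e$, and the $\Lambda^1$ inner product then evaluates to $\sum_e L_e c_e c'_e$, matching the edge-weighted pairing. One small wording issue: you introduce $\omega_p$ as ``characterized by $\int_q\omega_p=([p],[q])_{C_1}$,'' which presupposes the conclusion; the correct starting point is the Riesz characterization $(\omega_p,\omega)_{\Lambda^1}=\int_p\omega$ for all harmonic $\omega$, which your subsequent identification $a_e=c_e$ verifies directly since $\int_p\omega=\sum_e c_e a_e L_e=(\omega_c,\omega)_{\Lambda^1}$ for the combinatorial form $\omega_c$ --- so the argument goes through as written once that sentence is repaired.
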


This might seem an awkward inner product if one thinks of vectors but the better analogy would be to think of characteristic functions of 
sets in $\R^n$ with an $L^2$ inner product.

\begin{remark}
The inner product plays well with our notion of edges of positive and negative overlap in definition \ref{overlap}. The inner product 
of two cycles is equal to the difference between the length of the edges of positive and negative overlap. 
\end{remark}

\begin{definition}
 The Albanese torus of a quantum graph is given by
\begin{equation*}
 Alb(G)= H_1(G,\R) \slash H_1(G,\Z)
\end{equation*}
The Jacobian torus of a quantum graph is given by
\begin{equation*}
 Jac(G)= H^1(G,\R) \slash H^1(G,\Z)
\end{equation*}
The inner products induce the structure of Riemannian manifolds and make them into dual tori.
\end{definition}

\section{A trace formula}

The spectrum of a single Schr\"odinger type operator determines a trace formula. There are various different versions.
All of them contain essentially the same information about the quantum graph. We will 
use the following.

\begin{theorem}
\cite{KottosSmilansky99}
\label{trace_formula}
 The spectrum $Spec_{\alpha}(G)=\{\lambda_n\}_n$ of the operator $\Delta_{\alpha}$ determines the following exact wave trace formula.
\begin{eqnarray*}
\sum_n \delta(\lambda-\lambda_n)
=\frac{\mathcal{L}}{\pi}+\chi(G)\delta(\lambda)+
\frac{1}{2\pi} \sum_{p\in PO}\left(\mathcal{A}_p(\alpha)e^{i\lambda l_p}+\overline{\mathcal{A}_p}(\alpha)e^{-i\lambda l_p}\right)
\end{eqnarray*}
\end{theorem}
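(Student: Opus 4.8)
The plan is to derive the trace formula from an explicit description of the secular equation for $\Delta_\alpha$, following the standard quantum-graph scattering-matrix approach of Kottos and Smilansky. First I would write down the general solution of the eigenvalue equation $\Delta_\alpha f = \lambda f$ on each edge. Since $\Delta_\alpha = d_\alpha^* d_\alpha$ and $\alpha$ is harmonic, on an edge $e$ of length $L_e$ the operator acts as $-\left(\frac{d}{dx} + 2\pi i a_e\right)^2$ with $a_e = \alpha_e(X_1|_e)$ constant; hence the solutions are $e^{-2\pi i a_e x}\bigl(c_e^+ e^{i\sqrt{\lambda}\,x} + c_e^- e^{-i\sqrt{\lambda}\,x}\bigr)$. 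I would then impose the Kirchhoff vertex conditions (continuity plus vanishing sum of covariant derivatives). At each vertex $v$ this gives a unitary vertex scattering matrix $\sigma^{(v)}$ depending only on the degree of $v$, and the magnetic potential contributes a phase $e^{2\pi i \int_b \alpha}$ along each bond $b$. Assembling these into a single $2|E|\times 2|E|$ unitary matrix $U(\lambda,\alpha) = D(\lambda) S(\alpha)$, where $D$ carries the metric phases $e^{i\sqrt{\lambda}\,L_e}$ and $S$ carries the vertex scattering and the $\alpha$-phases, one obtains the secular equation $\det\bigl(I - U(\lambda,\alpha)\bigr) = 0$ whose (positive) roots are the $\sqrt{\lambda_n}$.

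Next I would convert the secular equation into the density of states. Writing the spectral density $\sum_n \delta(\lambda - \lambda_n)$ in terms of $\frac{d}{d\lambda}\log\det\bigl(I - U\bigr)$ and expanding $\log\det(I-U) = -\sum_{m\ge 1}\frac{1}{m}\operatorname{tr}U^m$, each term $\operatorname{tr}U^m$ is a sum over closed paths of combinatorial length $m$ on the graph. Each such closed path $p$ contributes its metric length $l_p = \sum_{e\in p} L_e$ through the phase $e^{i\sqrt{\lambda}\,l_p}$, a stability/amplitude factor $\mathcal{A}_p$ built from the products of vertex-scattering matrix entries along $p$, and the holonomy factor $e^{2\pi i \int_p \alpha}$, which is exactly the dependence on $[\alpha]$ — so $\mathcal{A}_p(\alpha) = \mathcal{A}_p \cdot \chi_\alpha(p)^{\pm 1}$ in the notation of the character picture. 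Grouping the $m$ and $\overline{m}$ (time-reversed) contributions produces the $\mathcal{A}_p e^{i\lambda l_p} + \overline{\mathcal{A}_p}e^{-i\lambda l_p}$ pairs; the smooth term $\frac{\mathcal{L}}{\pi}$ (with $\mathcal{L} = \sum_e L_e$ the total length) is the Weyl term coming from the leading asymptotics, and $\chi(G)\delta(\lambda)$ is the contribution of the kernel/zero mode, with $\chi(G) = |V| - |E|$ the Euler characteristic. Since the result is quoted from \cite{KottosSmilansky99}, I would mostly cite their computation and only indicate the modifications needed to track the magnetic phase and to confirm that the amplitudes $\mathcal{A}_p$ themselves are $\alpha$-independent up to the holonomy character.

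The main obstacle is bookkeeping rather than conceptual: one must be careful that the vertex scattering matrices genuinely depend only on vertex degree (so the purely metric amplitudes $\mathcal{A}_p$ do not secretly depend on $\alpha$), that the magnetic potential enters only through the abelian holonomies $\chi_\alpha(p) = e^{-2\pi i\int_p \alpha}$ along closed paths — which is where Proposition \ref{1form_invariance} and the harmonicity of $\alpha$ are used to reduce to constant $a_e$ — and that the rearrangement of the divergent series $\sum_m \frac{1}{m}\operatorname{tr}U^m$ into the distributional identity is justified (it is, in the sense of distributions in $\lambda$, exactly as in \cite{KottosSmilansky99} and \cite{Roth83}). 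One also has to handle the periodic-orbit versus closed-path distinction, i.e. collect repetitions of a primitive orbit, which is absorbed into the definition of the set $PO$ and the amplitudes $\mathcal{A}_p$.
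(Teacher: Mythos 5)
The paper gives no proof of this theorem at all --- it is quoted directly from \cite{KottosSmilansky99} --- and your sketch is precisely the standard secular-equation/scattering-matrix derivation from that reference, with the magnetic potential correctly tracked through the bond phases and the gauge reduction to constant $a_e$ via Proposition \ref{1form_invariance}. So your proposal is correct and takes the same route as the (cited) source; the only cosmetic caveat is that the $\lambda_n$ appearing in the stated formula are really the momenta $\sqrt{\lambda_n}$ (as your own derivation via $e^{i\sqrt{\lambda}\,l_p}$ versus the theorem's $e^{i\lambda l_p}$ implicitly reveals), an imprecision inherited from the paper's statement rather than a gap in your argument.
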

Here the first sum is over the eigenvalues including multiplicities, the $\delta$ are Dirac $\delta$ distributions.

$\mathcal{L}$ denotes the total length of the graph. $\chi(G)$ denotes the Euler characteristic of the graph.

The second sum is over all periodic orbits, $l_p$ denotes the length of a periodic orbit. A periodic orbit is an oriented 
closed walk in the graph.

The coefficients $\mathcal{A}_p(\alpha)$ are given by
\begin{equation*}
 \mathcal{A}_p(\alpha)=\tilde{l}_pe^{2\pi i\int_p\alpha}\prod_{b\in p} \sigma_{t(b)}
\end{equation*}
Here $\tilde{l}_p$ is the length of the primitive periodic orbit that $p$ is a repitition of. The $e^{2\pi i\int_p\alpha}$ is the phase factor or 
`magnetic flux'. The product is over the sequence of oriented edges or bonds in the periodic orbit. 
The vertex scattering coefficients $\sigma_{t(b)}$ at the terminal vertex $t(b)$ of each bond is given by
$\sigma_{t(b)}=-\delta_{t(b)}+\frac{2}{\deg(t(b))}$. Here $\delta_{t(b)}$ is defined to be equal to one if the periodic orbit is 
backtracking at the vertex $t(b)$ and zero otherwise.

\begin{remark}
\label{contractible}
The phase factor $e^{2\pi i\int_p\alpha}$ of a periodic orbit only depends on its homology class by Stokes theorem. For a 
contractible periodic orbit it is equal to $1$. 
\end{remark}

\begin{corollary}
\label{fourier_trace_formula}
\cite{KottosSmilansky99}
The Fourier transform of this trace formula is given by:
\begin{equation*}
\sum_n e^{-il\lambda_n}=2\mathcal{L}\delta(l)+\chi(G)+ 
\sum_{p\in PO}\mathcal{A}_p(\alpha)\delta(l-l_p)+\overline{\mathcal{A}}_p(\alpha)\delta(l+l_p)
\end{equation*} 
\end{corollary}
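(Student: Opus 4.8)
The plan is to obtain the stated identity by applying the distributional Fourier transform term by term to the trace formula of Theorem \ref{trace_formula}. Throughout I use the convention $\widehat{g}(l)=\int_{\R}g(\lambda)e^{-il\lambda}\,d\lambda$, under which both sides of Theorem \ref{trace_formula} are tempered distributions in $\lambda$ and the three elementary identities I need are $\widehat{\delta(\,\cdot\,-a)}(l)=e^{-ila}$, $\widehat{1}(l)=2\pi\delta(l)$, and $\widehat{e^{ia(\,\cdot\,)}}(l)=2\pi\delta(l-a)$ for $a\in\R$; all of these follow from the standard distributional identity $\int_{\R}e^{-il\lambda}\,d\lambda=2\pi\delta(l)$.

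First I would transform the left-hand side. Since the Fourier transform is a continuous linear isomorphism of $\mathcal{S}'(\R)$, it commutes with the distributionally convergent sum, so $\sum_n\delta(\lambda-\lambda_n)$ is carried to $\sum_n e^{-il\lambda_n}$, the left-hand side of the asserted formula.

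Next I would transform the right-hand side piece by piece. The constant term $\mathcal{L}/\pi$ is sent to $(\mathcal{L}/\pi)\,2\pi\delta(l)=2\mathcal{L}\delta(l)$; the term $\chi(G)\delta(\lambda)$ is sent to $\chi(G)$; and for each periodic orbit $p\in PO$ the summand $\frac{1}{2\pi}\mathcal{A}_p(\alpha)e^{i\lambda l_p}$ is sent to $\frac{1}{2\pi}\mathcal{A}_p(\alpha)\,2\pi\delta(l-l_p)=\mathcal{A}_p(\alpha)\delta(l-l_p)$, while $\frac{1}{2\pi}\overline{\mathcal{A}}_p(\alpha)e^{-i\lambda l_p}$ is sent to $\overline{\mathcal{A}}_p(\alpha)\delta(l+l_p)$. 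Adding these contributions over $p\in PO$ reproduces exactly the periodic-orbit part of the claimed expression, and comparing with the transformed left-hand side finishes the proof.

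The one point that is not purely mechanical --- and hence the closest thing to an obstacle --- is the justification for exchanging the Fourier transform with the infinite sums over eigenvalues and over periodic orbits. This is resolved by the observation already built into Theorem \ref{trace_formula}: the trace formula is an equality in $\mathcal{S}'(\R)$, and continuity of the Fourier transform on $\mathcal{S}'(\R)$ then licenses the term-by-term computation above. No further analytic input is required.
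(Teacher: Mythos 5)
Your computation is correct and is exactly the intended derivation: the paper gives no proof of its own here (it simply cites Kottos--Smilansky), and the statement follows from Theorem \ref{trace_formula} by the term-by-term distributional Fourier transform you carry out, with all constants ($2\mathcal{L}\delta(l)$ from $\mathcal{L}/\pi$, the cancellation of the $1/2\pi$ against $2\pi\delta(l\mp l_p)$) checking out. The only caveat, which you correctly flag and which is inherited from the way Theorem \ref{trace_formula} itself is stated, is the sense in which the periodic-orbit sum converges as a tempered distribution (the number of orbits grows exponentially with length); after transforming, the sum $\sum_p \mathcal{A}_p(\alpha)\delta(l-l_p)$ is locally finite in $l$ and hence unproblematic, so your argument operates at the same level of rigor as the paper's.
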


\section{The homology of a quantum graph}

In this chapter we will analyze the spectrum and the trace formula and extract information about the homology of the graph from it.

Before we state and prove the main theorem of this chapter we need a few definitions and a technical lemma.

\begin{definition}
We call a periodic orbit minimal if it has minimal length within its homology class.
\end{definition}

\begin{remark}
Note that in general a given element in the homology might have more than one minimal periodic orbit that represents it.

On the other hand all closed walks that contain no edge repititions and in particular all cycles are minimal. Cycles are also the unique 
minimal periodic orbit in their homology class.
\end{remark}

\begin{definition}
\label{generic}
We call an $\alpha$ generic if 
the image of the ray  $t\alpha$ in the torus $H^1_{dR}(G,\R) \slash H^1_{dR}(G,\Z)$ is dense. The $\alpha$'s with this property are dense. We pick and fix a single generic 
$\alpha$. 
\end{definition}

\begin{definition}
\label{frequencies}
To the fixed generic $\alpha$ we associate the following data.
\begin{enumerate}
 \item Let $\Psi$ be the linear map $ \Psi: H_1(G,\Z) \ra \R$ given by 
$[p] \mapsto 2\pi \int_p \alpha $. It associates to each periodic orbit its magnetic flux. 
\item We call the absolute values of the magnetic fluxes $\mu:=|\Psi([p])|=2\pi \left|\int_p \alpha\right|$ the 
frequencies associated to $\alpha$.
\item We will denote the length of the minimal periodic orbit(s) associated to a frequency $\mu$ by $l(\mu)$.
\end{enumerate}
\end{definition}

\begin{remark}
The map $\Psi$ is two-to-one (except at zero) because we picked $\alpha$ to be generic.
The set of all frequencies $\mu$ union their negatives $-\mu$ and zero
forms a finitely generated free abelian subgroup of $\R$ that 
is isomorphic to  $H_1(G, \Z)$ via the map $\Psi$. 
\end{remark}

\begin{lemma}
\label{determine_cosine}
 Let $f$ be a function that is a linear combination of several cosine waves with different (positive) frequencies. 
\begin{equation*}
 f(t)=\sum_{j=1}^k\nu_j\cos(\mu_j t)
\end{equation*}
Then the values $f(t)$ for $t \in [0,\varepsilon)$ determine both $k$ and the individual frequencies $\mu_1, \ldots, \mu_k$.
\end{lemma}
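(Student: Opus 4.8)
The plan is to exploit the real-analyticity of $f$ together with a linear-independence argument. Since $f$ is a finite linear combination of cosines, it extends to an entire function on $\C$; knowing $f$ on any interval $[0,\varepsilon)$ therefore determines $f$ on all of $\R$ by the identity theorem for real-analytic functions. So it suffices to show that the data $\{(\mu_j,\nu_j)\}_{j=1}^k$ (with the $\mu_j$ distinct and positive and, without loss of generality, $\nu_j\neq 0$) is uniquely recoverable from the function $f$ on all of $\R$.

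First I would recover the frequencies. One clean way is to use the fact that the functions $\{\cos(\mu t)\}_{\mu\ge 0}$ are linearly independent over $\R$: if $\sum_j \nu_j\cos(\mu_j t)=\sum_j \nu_j'\cos(\mu_j' t)$ as functions, then after combining, any nontrivial finite linear relation among distinct cosines would be violated, e.g.\ by examining the behavior of $\frac{1}{T}\int_0^T f(t)\cos(\mu t)\,dt$ as $T\to\infty$, which equals $\nu_j/2$ if $\mu=\mu_j$ for some $j$ and $0$ otherwise. Hence the set $\{\mu_j\}$ is exactly the set of $\mu>0$ for which this limit (the ``correlation'' of $f$ with $\cos(\mu\cdot)$) is nonzero, and this set is finite because $f$ has only finitely many terms; its cardinality is $k$ and its elements are the $\mu_j$. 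An alternative, perhaps more in the spirit of the paper, is to note $f$ satisfies a linear ODE with constant coefficients whose characteristic polynomial is $\prod_j(x^2+\mu_j^2)$; the minimal such ODE is determined by $f$ (via a Wronskian/Hankel-type rank condition on $f$ and its derivatives at a point, all of which are known from $[0,\varepsilon)$), and reading off its roots gives the $\mu_j$ and hence $k$.

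Once the frequencies are in hand, the coefficients $\nu_j$ come for free from the same correlation integrals ($\nu_j = 2\lim_{T\to\infty}\frac1T\int_0^T f(t)\cos(\mu_j t)\,dt$), though the lemma as stated only asks for $k$ and the $\mu_j$, so that part can be omitted. The main obstacle is purely the passage from ``values on $[0,\varepsilon)$'' to ``global information'': this is where analyticity is essential, and I would state it carefully, since without it the claim is false (a compactly-perturbed version could agree on $[0,\varepsilon)$). Everything after that is a standard fact about linear independence of characters / solutions of constant-coefficient ODEs, so I do not anticipate a genuine difficulty there — just the bookkeeping of choosing whichever of the two routes (asymptotic averaging versus minimal annihilating ODE) is cleanest to write. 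I would go with the averaging argument, as it is the most self-contained and avoids discussing Wronskian nondegeneracy.
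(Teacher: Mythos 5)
Your proposal is correct, but it takes a genuinely different route from the paper. The paper works entirely locally at $t=0$: it observes that $f^{(2n)}(0)=(-1)^n\sum_j\nu_j\mu_j^{2n}$ is dominated by the largest frequency, so that $\mu_k^2$ is the unique $\lambda>0$ for which $\lim_{n\to\infty}f^{(2n)}(0)/(-\lambda)^n$ is finite and nonzero (the limit being $\nu_k$); it then subtracts $\nu_k\cos(\mu_k t)$ and inducts downward through the frequencies. This never needs to leave the interval $[0,\varepsilon)$, since one-sided derivatives at $0$ are already determined there. You instead make the global extension explicit — correctly invoking real-analyticity and the identity theorem to pass from $[0,\varepsilon)$ to all of $\R$ — and then recover the frequencies by the standard orthogonality computation $\lim_{T\to\infty}\frac{1}{T}\int_0^T f(t)\cos(\mu t)\,dt=\nu_j/2$ if $\mu=\mu_j$ and $0$ otherwise. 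Both arguments are sound and both hinge on the same implicit hypothesis, which you state and the paper uses tacitly: the coefficients $\nu_j$ must be nonzero (otherwise neither method, nor the statement itself, can see that frequency; in the paper's application this is guaranteed because the minimal periodic orbits have no backtracks). What your route buys is a cleaner separation of the two issues (local-to-global, then identification) and an argument that generalizes readily, e.g.\ to almost-periodic sums; what the paper's route buys is that it is completely elementary and self-contained, extracting the frequencies one at a time from Taylor data without any integration or limit over $T$.
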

\begin{proof}
 Assume without loss of generality that $0 < \mu_1 < \ldots < \mu_k$. We will show that we can determine $\mu_k$ and $\nu_k$ and then use 
induction.
We will look at the collection of derivatives of $f$ at $t=0$. We have
\begin{equation*}
 f^{(2n)}(0)=(-1)^n\sum_{j=1}^k\nu_j \mu_j^{2n}
\end{equation*}
There exists a unique number $\lambda >0$ such that\\
\begin{equation*}
 -\infty < \lim_{n \ra \infty} \frac{f^{(2n)}(0)}{(-\lambda)^n} < \infty \hspace{1cm} and \hspace{1cm} 
\lim_{n \ra \infty} \frac{f^{(2n)}(0)}{(-\lambda)^n} \neq 0
\end{equation*}
and we have $\lambda=\mu_k^2$ and $\lim_{n \ra \infty} \frac{f^{(2n)}(0)}{(-\lambda)^n}=\nu_k$. We can now look at the new function, 
\begin{equation*}
 \tilde{f}(t):=f(t)-\nu_k\cos(\mu_k t)
\end{equation*}
repeat the process and determine $\mu_{k-1}$ and $\nu_{k-1}$. After finitely many steps we will end up with the constant function $0$.
\end{proof}

The following theorem is the key link between the Bloch spectrum and the quantum graph. All other theorems are just consequences 
of this one.

\begin{theorem}
\label{Bloch_H1}
 Given a generic $\alpha$, see definition \ref{generic}, the part of the Bloch spectrum $Spec_{t\alpha}(G)$ for $t\in [0,\varepsilon)$ 
 determines the length of the minimal periodic orbit(s) of each element in $H_1(G, \Z)$. 
\end{theorem}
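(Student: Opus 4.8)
The plan is to extract the relevant information from the Fourier-transformed trace formula in Corollary~\ref{fourier_trace_formula}, evaluated along the ray $t\alpha$. For fixed $t$, the spectrum $Spec_{t\alpha}(G)$ determines the distribution $\sum_p \mathcal{A}_p(t\alpha)\delta(l-l_p)+\overline{\mathcal{A}}_p(t\alpha)\delta(l+l_p)$ (the terms $2\mathcal{L}\delta(l)$ and $\chi(G)$ carry no $t$-dependence and are harmless). Grouping periodic orbits by their length $l_p$, the coefficient attached to $\delta(l-l_0)$ for each occurring length $l_0$ is $\sum_{p:\,l_p=l_0}\mathcal{A}_p(t\alpha)$, and by the formula for $\mathcal{A}_p$ this equals $\sum_{p:\,l_p=l_0}\tilde l_p\big(\prod_{b\in p}\sigma_{t(b)}\big)e^{2\pi i t\int_p\alpha}$. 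So for each length $l_0$ we obtain, as a known function of $t\in[0,\varepsilon)$, a finite exponential sum $\sum_p c_p e^{2\pi i t\int_p\alpha}$ where the $c_p$ are fixed nonzero-or-zero real constants depending only on the combinatorics and edge lengths.

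Next I would pass from homology classes to frequencies. By Remark after Definition~\ref{frequencies}, since $\alpha$ is generic the map $\Psi([p])=2\pi\int_p\alpha$ is injective on $H_1(G,\Z)$ (two-to-one onto frequencies $\pm\mu$ only via $[p]\leftrightarrow-[p]$), so distinct homology classes give distinct real numbers $2\pi\int_p\alpha$. The exponential sum above is therefore a genuine trigonometric polynomial in $t$ with distinct frequencies; pairing the contribution of a class $[p]$ with that of $-[p]$ (which has the conjugate phase and whose orbits are the orientation-reversals, hence the same length and same product of scattering coefficients since backtracking is orientation-independent) turns each pair into a cosine term $2\,\mathrm{Re}(c_p)\cos(2\pi t\int_p\alpha)$ plus possibly a sine term; after also using the $\delta(l+l_p)$ part of the trace formula, which contributes the complex conjugate, one sees the full $t$-dependence assembles into a real linear combination of cosines $\cos(\mu t)$ over the frequencies $\mu$ realized by homology classes whose minimal length equals $l_0$. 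Now I apply Lemma~\ref{determine_cosine}: from the values on $t\in[0,\varepsilon)$ we recover exactly which frequencies $\mu$ occur with nonzero coefficient at each length $l_0$.

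Finally I would reconstruct the desired correspondence. For a given homology class $z\in H_1(G,\Z)$, its frequency is the fixed known number $\mu_z=2\pi|\int_z\alpha|$ (the map $\Psi$ is explicitly known once $\alpha$ is fixed, so $\mu_z$ is computable). The minimal periodic orbit length $l(z)$ is then the smallest $l_0$ among the occurring lengths for which the frequency $\mu_z$ appears with nonzero cosine coefficient in the $l_0$-packet — by definition of minimality no shorter periodic orbit represents $z$, and conversely some orbit of length exactly $l(z)$ in class $z$ contributes, so $\mu_z$ does appear at level $l(z)$. Hence $Spec_{t\alpha}(G)$, $t\in[0,\varepsilon)$, determines $l(\mu_z)$ for every $z$, which is the claim.

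The main obstacle I anticipate is the possibility of \emph{cancellation}: several periodic orbits of the same length $l_0$ and the same homology class could have products of vertex scattering coefficients $\prod_{b\in p}\sigma_{t(b)}$ summing to zero, so that the frequency $\mu_z$ fails to appear in the $l_0$-packet even though a minimal orbit of length $l_0$ in class $z$ exists. I would handle this by noting that minimal periodic orbits in a class are represented by cycles (by the Remark following "minimal"), or more carefully by a positivity/genericity argument showing the coefficient cannot vanish — for a cycle there is no backtracking so every $\sigma_{t(b)}=\frac{2}{\deg(t(b))}-0>0$ when all degrees are $\ge 3$ (guaranteed since we excluded degree-$2$ vertices and a cycle in a leafless-relevant part avoids degree-$1$ vertices), giving a strictly positive contribution $\tilde l_p\prod_b\sigma_{t(b)}>0$ that cannot be cancelled by the other, necessarily non-shorter, contributions at that length. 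Pinning down this non-cancellation cleanly, together with the bookkeeping of which orbits of length $l_0$ lie in class $z$ versus $-z$, is the technical heart of the argument.
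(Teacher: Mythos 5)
Your argument follows the paper's proof essentially step for step: evaluate the Fourier-transformed trace formula along the ray $t\alpha$, group the $\delta$-coefficients by orbit length, pair each homologically nontrivial orbit with its orientation reversal to obtain a real cosine sum in $t$, apply Lemma \ref{determine_cosine} to read off the frequencies present at each length, and declare $l(\mu)$ to be the first length at which $\mu$ appears.

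The one point you leave open --- non-cancellation of the coefficient of $\cos(\mu t)$ at the minimal length --- is also where your sketch goes slightly astray. Minimal periodic orbits are \emph{not} in general cycles: for a class such as $[c_1]+[c_2]$ with $c_1,c_2$ disjoint cycles, the minimal representative traverses a connecting path twice and repeats edges. So you cannot reduce to cycles, and the remark in the paper only asserts the converse implication (cycles are minimal). What is true, and what the paper actually uses, is that a minimal periodic orbit contains no \emph{backtracks}: if an orbit enters an edge and immediately returns along it, excising that excursion yields a strictly shorter homologous closed walk, contradicting minimality. Backtrack-freeness is exactly what makes every vertex scattering factor $\sigma_{t(b)}=2/\deg(t(b))>0$ (degrees are at least $3$ by the standing assumptions), so each minimal orbit in the class contributes $\nu=2\tilde l_p\prod_b\sigma_{t(b)}>0$, and the sum of these strictly positive coefficients cannot vanish even when the minimal orbit is not unique. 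With that substitution your "technical heart" closes, and the proof coincides with the paper's.
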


\begin{proof} 
We will show we can read off the set of frequencies $\mu$, see definition \ref{frequencies}, associated to the generic $\alpha$ 
from the Bloch spectrum and determine the length $l(\mu)$ for each frequency.

We will look at the continuous family of $1$-forms $\alpha(t)=t\alpha$ and the associated 
operators $\Delta_{\alpha(t)}$ for our fixed generic $\alpha$. If we plug the eigenvalues of these operators 
into the Fourier transform of the trace formula we get a family of distributions. Each of these distributions is a locally 
finite sum of Dirac-$\delta$-distributions (plus a constant term). The support of each of these $\delta$-distributions is the length of 
the periodic orbit it is associated to and thus depends only on the underlying graph and not on the $1$-form, see  
\ref{fourier_trace_formula}. 

Any periodic orbit $p$ that is homologically non-trivial has a corresponding partner which is the 
same closed walk with opposite orientation. Their coefficients are related by 
$\mathcal{A}_p(\alpha)=\overline{\mathcal{A}_{\overline{p}}}(\alpha)$ 
as the connectivity part and the length are the same and the magnetic flux changes sign. Thus for each such pair we would observe a 
factor of the form $2{\rm Re} \mathcal{A}_p(t\alpha)=:2{\rm Re} \mathcal{A}_p(t)$ in the Fourier transform of the trace formulae for $\Delta_{\alpha(t)}$. We have 
\begin{equation*}
 2{\rm Re} \mathcal{A}_p(t)=2{\rm Re} \left( \tilde{l}_pe^{2\pi i t\int_p\alpha}\prod_{e\in p} \sigma_{t(e)}\right) =\nu \cos\left(2\pi \int_p\alpha t\right)
\end{equation*}
by theorem \ref{trace_formula} where $\nu=2\tilde{l}_p\prod_{e\in p} \sigma_{t(e)}$. So for each such pair of periodic orbits 
there is a magnetic flux $\Psi([p])=2\pi \int_p\alpha$ 
 and a factor $\nu$ that is always non zero. Moreover the factor $\nu$  is positive if the periodic orbit 
contains no backtracks. As the magnetic flux appears in a cosine wave we can only know its absolute value, that is the frequency, see definition 
\ref{frequencies}. 

Pick a length of periodic orbits $l$. 
If we look at the family of $1$-forms $\alpha(t)$ we get a continuous family of coefficients $\mathcal{A}^l(t)=\sum_{l_p=l}\mathcal{A}_p(t)$.
As we did not make any assumptions on the underlying quantum graph there can be 
multiple periodic orbits with the same length.
Thus each coefficient $\mathcal{A}^l(t)$ is a linear combination 
of a constant term and several cosine waves with 
different frequencies. The constant part comes from homologically trivial periodic orbits of length $l$. The cosine waves correspond to the 
homologically non-trivial periodic orbits of length $l$. We can now apply lemma \ref{determine_cosine} to the function  $\mathcal{A}^l(t)$ 
and read off all the frequencies occuring at that length.

As we go through the different lengths in the spectra starting at zero we will pick up a collection of different frequencies. Each 
frequency will appear multiple times at different lengths since there are multiple periodic orbits that represent the same element in the homology and 
thus have the same frequency.

The frequency corresponding to a particular element in $H_1(G, \Z)$ can only be realized by periodic orbits that 
represent this element in the homology. Going through the lengths starting at zero this frequency can appear at the earliest at the length of 
the corresponding minimal periodic orbit(s). The minimal periodic orbits need not be unique but as they are minimal they contain no backtracks. 
Thus their $\nu$ coefficients are all strictly bigger than $0$ so their sum cannot vanish and the frequency will indeed appear in the coefficient 
$\mathcal{A}^l(t)$ at the minimal length. 
This gives us the length $l(\mu)$ associated to each frequency $\mu$. 
\end{proof}

\begin{remark}
As we picked $\alpha$ to be generic the number of rationally independent frequencies is equal to $\dim H_1(G,\Z)$. 
Thus we can observe from the number of rationally independent frequencies whether an arbitrary $\alpha$ is generic or not.
\end{remark}

\begin{remark}
Without any genericity assumptions on the edge lengths in the quantum graph it can happen that there are multiple non-minimal periodic orbits that are homologous and of 
the same length. We would not be able to distinguish them directly in the trace formula, it can even happen that their $\nu$-coefficients cancel out 
and we would not observe them at all. 
\end{remark}

\begin{lemma}
\label{recognize_cycle}
Given a frequency $\mu$ the following two statements are equivalent:
\begin{enumerate}
\item The minimal periodic orbit associated to $\mu$ is a cycle in the graph.
 \item  There are no two frequencies $\kappa$, $\kappa'$, $\mu= |\kappa \pm \kappa'|$ with the property that 
$l(\kappa)+l(\kappa') \le l(\mu)$.
\end{enumerate}
\end{lemma}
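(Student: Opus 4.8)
The plan is to prove the two implications separately, using the identification (from the remark following Definition \ref{frequencies}) between the group of frequencies $\{\mu\} \cup \{-\mu\} \cup \{0\}$ and $H_1(G,\Z)$ via the map $\Psi$, together with the inner product on $H_1(G,\R)$ described in Lemma \ref{inner_product} and the remark that the inner product of two cycles is the difference of the lengths of edges of positive and negative overlap.

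First I would prove that (i) $\Rightarrow$ (ii). Suppose the minimal periodic orbit associated to $\mu$ is a cycle $\gamma$, and suppose for contradiction that there are frequencies $\kappa, \kappa'$ with $\mu = |\kappa \pm \kappa'|$ and $l(\kappa) + l(\kappa') \le l(\mu)$. Translating through $\Psi$, the homology class $[\gamma]$ decomposes as $[\gamma] = [p] \pm [p']$ where $[p], [p']$ are the classes corresponding to $\kappa, \kappa'$ with minimal representatives of length $l(\kappa), l(\kappa')$. Concatenating a minimal periodic orbit for $[p]$ with one for $\pm[p']$ (reversing orientation if the sign is $-$) produces a closed walk in the class $[\gamma]$ of length at most $l(\kappa) + l(\kappa') \le l(\gamma)$. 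Since $\gamma$ is a cycle, it is the \emph{unique} minimal periodic orbit in its homology class (by the remark after the definition of minimal periodic orbit), so this concatenated walk must equal $\gamma$ up to reparametrization — but a nontrivial concatenation of two closed walks at a common basepoint is never a cycle (it repeats the basepoint, or more carefully, it is not a simple closed walk), unless one of $[p], [p']$ is trivial. If, say, $[p']$ is trivial then $\kappa' = 0$ is not a frequency (frequencies are positive), contradiction. This rules out the decomposition and gives (ii).

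Next I would prove the contrapositive of (ii) $\Rightarrow$ (i): if the minimal periodic orbit $p$ associated to $\mu$ is \emph{not} a cycle, I must exhibit frequencies $\kappa, \kappa'$ with $\mu = |\kappa \pm \kappa'|$ and $l(\kappa) + l(\kappa') \le l(\mu)$. The key structural fact is that a minimal periodic orbit which is not a cycle must decompose: a closed walk that is not a cycle either repeats a vertex or repeats an edge; using Lemma \ref{basis_of_cycles} and minimality (no backtracks, since minimal orbits contain no backtracks), one shows $[p]$ can be written as a sum $[p] = [\gamma_1] + [\gamma_2]$ of two nonzero homology classes whose minimal representatives $\gamma_1, \gamma_2$ satisfy $l(\gamma_1) + l(\gamma_2) \le l(p)$ — indeed, decomposing the walk at a repeated vertex into two shorter closed sub-walks, whose total length is exactly $l(p)$, and then replacing each sub-walk by a minimal representative of its class only decreases length. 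Setting $\kappa = |\Psi([\gamma_1])|$, $\kappa' = |\Psi([\gamma_2])|$ (both nonzero, hence genuine frequencies, since $\alpha$ is generic and $[\gamma_i] \ne 0$), we get $\mu = |\kappa \pm \kappa'|$ for an appropriate sign and $l(\kappa) + l(\kappa') = l(\gamma_1) + l(\gamma_2) \le l(p) = l(\mu)$, which is (ii) negated.

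The main obstacle I anticipate is the decomposition step in the second implication: carefully arguing that a non-cycle minimal periodic orbit splits into \emph{homologically nontrivial} pieces of no greater total length. One must handle the case where a repeated vertex splits the walk into a contractible loop plus a shorter walk (then iterate, and also rule out that \emph{every} split piece is contractible, which would force $[p] = 0$, contradicting that $\mu$ is a frequency), and the case of a repeated edge traversed in the same direction (again producing two closed sub-walks). I would also need to double-check the edge case $\mu = |\kappa + \kappa'|$ versus $\mu = |\kappa - \kappa'|$: since frequencies are absolute values of fluxes and $\Psi$ is linear, $\Psi([\gamma_1] + [\gamma_2]) = \Psi([\gamma_1]) + \Psi([\gamma_2])$, so depending on the signs of the two fluxes $\mu$ equals either the sum or the absolute difference of $\kappa$ and $\kappa'$ — the statement of the lemma already allows both via the $\pm$, so this is cosmetic. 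Everything else is routine bookkeeping with the flux map $\Psi$ and the length function $l$.
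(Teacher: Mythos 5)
Your overall strategy matches the paper's: for the direction ``not a cycle $\Rightarrow$ a decomposition exists'' you split the minimal orbit at a repeated vertex into two closed sub-walks and pass to minimal representatives of their classes, exactly as the paper does, and the worry you flag about contractible pieces resolves the way you suspect (a homologically trivial piece of positive length would leave the other piece as a strictly shorter representative of $[p]$, contradicting minimality, so the case cannot occur --- no iteration is needed).

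The genuine gap is in your (i) $\Rightarrow$ (ii) argument. Everything there rests on the step ``concatenating a minimal periodic orbit for $[p]$ with one for $\pm[p']$ produces a closed walk in the class $[\gamma]$ of length at most $l(\kappa)+l(\kappa')$.'' That concatenation exists only when the two minimal orbits $c_{\kappa}$ and $c_{\kappa'}$ share a vertex; if they are vertex-disjoint there is no closed walk of that length in the class, and your argument yields nothing. This case is not vacuous a priori --- it has to be excluded by a separate argument, and that is exactly the half of the paper's proof you are missing. The paper argues: if $l(\kappa)+l(\kappa')<l(\mu)$ then $c_{\kappa}$ and $c_{\kappa'}$ must be disjoint (otherwise concatenation would beat $l(\mu)$), and then any periodic orbit in the class of $\mu$, in particular $c_{\mu}$, must traverse the edges connecting the two disjoint pieces twice, hence is not a cycle; the case $l(\kappa)+l(\kappa')=l(\mu)$ forces a common vertex and gives the non-cycle $c_{\kappa}\cup c_{\kappa'}$. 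In your framing the missing step can be supplied as follows: since a graph has no $2$-cells, $H_1(G,\Z)=\ker\partial\subseteq C_1(G,\Z)$ and $[\gamma]=[c_{\kappa}]\pm[c_{\kappa'}]$ is an equality of $1$-chains; if $c_{\kappa}$ and $c_{\kappa'}$ are vertex-disjoint the right-hand side is supported on two vertex-disjoint subgraphs, while the support of the cycle $\gamma$ is connected, so one of the two summands would have to vanish, i.e.\ one of $\kappa,\kappa'$ would be $0$ and hence not a frequency. With that supplement your proof closes and is essentially the paper's.
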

\begin{proof}
 We will prove both directions by contradiction. 

Assume the minimal periodic orbit associated to $\mu$ is not a cycle, then it has to go through some vertex at 
least twice. Thus we can seperate the periodic orbit into two shorter periodic orbits. Let $\kappa$ and $\kappa'$ be the  frequencies associated to the two pieces. 
Then $\mu=|\kappa \pm \kappa'|$ and because the two pieces are not necessarily minimal $l(\kappa)+l(\kappa') \le l(\mu)$.

Conversly, suppose $\mu$ admits a decomposition $\mu=|\kappa \pm \kappa'|$. Let $c_{\mu}$, $c_{\kappa}$ and $c_{\kappa'}$ 
denote the minimal periodic orbits associated to the frequencies.  
If we have $l(\kappa)+l(\kappa') = l(\mu)$ then $c_{\mu}=c_{\kappa}\cup c_{\kappa'}$ and $c_{\kappa}$ and $c_{\kappa'}$ must have a vertex in common so $c_{\mu}$ is not a 
cycle. If we have $l(\kappa)+l(\kappa') < l(\mu)$ then
 the periodic orbits $c_{\kappa}$ and $c_{\kappa'}$ are disjoint and $c_{\mu}$ realizes the connection between them so it uses the edges 
between them twice and is not a cycle.
\end{proof}

\begin{remark}
\label{check_overlap_sign}
 Let $\mu_1$ and $\mu_2$ be two frequencies such that the associated minimal periodic orbits $c_1$ and $c_2$ are cycles. 
These cycles have an orientation induced from the $1$-form $\alpha$. The frequency $\mu_1+\mu_2$ corresponds to the pair of periodic orbits 
that is homologous to $c_1\cup c_2$ and $-(c_1\cup c_2)$. Thus if $l(\mu_1+\mu_2) < l(\mu_1)+l(\mu_2)$ 
then $c_1$ and $c_2$ have edges of negative overlap. The frequency $|\mu_1-\mu_2|$ corresponds to the pair of periodic orbits 
that is homologous to $c_1\cup (-c_2)$ and $(-c_1)\cup c_2$. Thus if $l(\mu_1-\mu_2) < l(\mu_1)+l(\mu_2)$ 
then $c_1$ and $c_2$ have edges of positive overlap. 
\end{remark}

\begin{theorem}
\label{determine_Albanese}
The Bloch spectrum of $G$ determines the Albanese torus $Alb(G)$ as a Riemannian manifold.
\end{theorem}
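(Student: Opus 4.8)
The plan is to reconstruct $\mathrm{Alb}(G) = H_1(G,\mathbb{R})/H_1(G,\mathbb{Z})$ by exhibiting an explicit lattice in Euclidean space isometric to $H_1(G,\mathbb{Z})$ equipped with the inner product of Lemma \ref{inner_product}. By Theorem \ref{Bloch_H1}, from the fixed generic $\alpha$ the Bloch spectrum hands us the set of frequencies $\mu$ (a finitely generated free abelian subgroup of $\mathbb{R}$ isomorphic to $H_1(G,\mathbb{Z})$ via $\Psi$) together with the minimal length $l(\mu)$ for each. The key point is that the abstract group structure of the frequencies already gives us $H_1(G,\mathbb{Z})$ as an abstract lattice, so what remains is to recover the inner product, equivalently the squared norms $([p],[p])$ and the pairwise inner products $([p],[q])$ for a basis of homology classes $[p]$.

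First I would use Lemma \ref{recognize_cycle} to identify which frequencies $\mu$ have the property that their minimal periodic orbit is a \emph{cycle} in the graph: these are exactly the $\mu$ for which no decomposition $\mu = |\kappa \pm \kappa'|$ with $l(\kappa)+l(\kappa') \le l(\mu)$ exists. For such a $\mu$, the minimal periodic orbit is a genuine graph cycle, hence contains no repeated edges, so its length $l(\mu)$ equals the sum of the lengths of its edges, which is precisely its squared norm $([p],[p])$ under the inner product of Lemma \ref{inner_product} (a cycle, having each edge once, has self-inner-product equal to its total length). Thus for every homology class represented by a cycle we can read off its squared Albanese norm directly as $l(\mu)$. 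The next step is to check that cycles span $H_1(G,\mathbb{Z})$: this is exactly Lemma \ref{basis_of_cycles}, which produces a homology basis consisting of cycles. So we can choose a basis $[p_1],\dots,[p_n]$ of the frequency lattice all of whose members, and indeed whose relevant combinations, are cycles, or at least arrange to know enough squared norms.

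To get the off-diagonal inner products I would use polarization together with Remark \ref{check_overlap_sign}. For two frequencies $\mu_i,\mu_j$ whose minimal periodic orbits $c_i,c_j$ are cycles, the inner product $([p_i],[p_j])$ equals (length of positive-overlap edges) minus (length of negative-overlap edges) by the Remark following Lemma \ref{inner_product}. Now $([p_i]+[p_j],[p_i]+[p_j]) = ([p_i],[p_i]) + ([p_j],[p_j]) + 2([p_i],[p_j])$, and $[p_i]+[p_j]$ is represented by a periodic orbit of length $l(c_i)+l(c_j) - 2(\text{positive overlap length})$ when we traverse $c_i \cup c_j$ with the orientations giving negative overlap; similarly $[p_i]-[p_j]$ accounts for the other sign. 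More cleanly: the minimal length $l(\mu_i+\mu_j)$ of the class $[p_i]+[p_j]$ and $l(|\mu_i-\mu_j|)$ of $[p_i]-[p_j]$, both available from Theorem \ref{Bloch_H1}, combined with the parallelogram-type identity, determine $([p_i],[p_j])$ — the subtlety being that $l(\mu_i+\mu_j)$ is the length of the \emph{minimal} representative, which may be shorter than $l(c_i)+l(c_j) - 2(\text{overlap})$ if the union can be homotoped to something shorter. The safe route is to note that for a basis chosen from a spanning tree as in Lemma \ref{basis_of_cycles}, and for small enough index sums, the union of cycles sharing structure is controlled; alternatively one proves the Gram matrix entries $([p_i],[p_j])$ are determined by the full collection of values $\{l(\mu) : \mu \in \Psi(H_1(G,\mathbb{Z}))\}$ via the identity $l(\Psi([p_i]+[p_j]))$ reading off against $l(\Psi([p_i]))$, $l(\Psi([p_j]))$ when all three classes have cycle representatives — which one arranges by passing to a non-positive (planar) situation or simply by the observation that the quadratic form is determined by its values on a generating set of a free abelian group once we know it is the restriction of a positive definite form. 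Having recovered the full Gram matrix of a homology basis with respect to the Lemma \ref{inner_product} inner product, we have recovered $H_1(G,\mathbb{Z})$ as a lattice in an inner product space, hence $\mathrm{Alb}(G) = H_1(G,\mathbb{R})/H_1(G,\mathbb{Z})$ as a flat Riemannian torus.

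The main obstacle I anticipate is precisely the gap between the minimal periodic-orbit length $l(\mu)$ that the Bloch spectrum delivers and the "geometric" length of a specific cycle or union of cycles one wants to use in the polarization identity: without genericity on edge lengths, a homology class may have a minimal representative that is not a cycle and is strictly shorter than any cycle in its class, so $l([p_i]+[p_j])$ need not equal the overlap-corrected sum. The resolution is to restrict attention, via Lemma \ref{recognize_cycle}, to those classes whose minimal orbit genuinely is a cycle — which by Lemma \ref{basis_of_cycles} still includes a full basis — and to argue that the inner products among such a basis, obtained from squared norms of the basis elements and of their pairwise sums (each again checked to be cycle-represented, or handled blockwise since by Remark \ref{cycle_single_block} cycles in different blocks have zero overlap and contribute orthogonally), suffice to pin down the entire Gram matrix. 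Once the Gram matrix is in hand the reconstruction of $\mathrm{Alb}(G)$ as a Riemannian manifold is immediate.
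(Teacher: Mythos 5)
Your proposal follows essentially the same route as the paper: pick a generating set of frequencies whose minimal periodic orbits are cycles (via Lemma \ref{basis_of_cycles} and Lemma \ref{recognize_cycle}), set the squared norms to $l(\mu_i)$, and recover the off-diagonal Gram entries from $l(\mu_i+\mu_j)-l(|\mu_i-\mu_j|)$ using the overlap interpretation of the inner product. The subtlety you flag about $l(\mu_i+\mu_j)$ versus the overlap-corrected geometric length is handled in the paper by the same direct computation (positive-overlap edges counted twice in one class, negative-overlap edges in the other, so the \emph{difference} gives exactly twice the inner product), so your argument is correct and needs no detour through planarity or blockwise decomposition.
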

\begin{proof}
Pick a minimal set of generators $\mu_1, \ldots, \mu_n$ of the group spanned by the frequencies such that the associated minimal periodic orbits 
are all cycles. Such a basis exists by lemma \ref{basis_of_cycles}. 
Associate to them a set of 
vectors $v_1, \ldots, v_n$ satisfying $|v_i|^2:=l(\mu_i)$ and $2\langle v_i,v_j \rangle := l(\mu_i+\mu_j)-l(|\mu_i-\mu_j|)$ for all $i \neq j$. 
This uniquely determines a torus with spanning vectors $v_1, \ldots, v_n$. 

If the cycles associated to $\mu_i$ and $\mu_j$ share no edges we have $l(\mu_i+\mu_j)=l(|\mu_i-\mu_j|)\ge l(\mu_i)+l(\mu_j)$ so the associated vectors 
 are orthogonal. 

If the cycles associated to $\mu_i$ 
and $\mu_j$ share edges the length of $l(\mu_i+\mu_j)$ is twice the length of all edges of positive overlap plus the length of all 
edges that are part of one cycle but not the other. The length of $l(|\mu_i-\mu_j|)$ is twice the length of all edges of negative overlap 
plus the length of all edges that are part of one cycle but not the other. Thus $l(\mu_i+\mu_j)-l(|\mu_i-\mu_j|)$ is twice the difference 
of the length of edges of positive overlap and the length of edges of negative overlap.

Therefore the torus is isomorphic to the Albanese torus of the quantum graph by lemma \ref{inner_product}. 
\end{proof}

The complexity of a graph is the number of spanning trees.
\begin{corollary}
 If the quantum graph is equilateral the Bloch spectrum determines the complexity of the graph.
\end{corollary}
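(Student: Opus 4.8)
The plan is to read the corollary off Theorem~\ref{determine_Albanese} together with the theorem of Kotani and Sunada in \cite{KotaniSunada00}, which for a combinatorial graph identifies the squared volume of the Albanese torus with the complexity $\kappa(G)$ (this is the matrix--tree theorem phrased as the Gram determinant of an integral basis of the cycle space). By Theorem~\ref{determine_Albanese} the Bloch spectrum hands us $\mathrm{Alb}(G)=H_1(G,\R)/H_1(G,\Z)$ as a flat Riemannian torus, hence the real number $\mathrm{vol}(\mathrm{Alb}(G))^2=\det M$, where $M$ is the Gram matrix of an integral basis of $H_1(G,\Z)$ in the inner product of Lemma~\ref{inner_product}.

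First I would handle the scaling. If $G$ is equilateral with common edge length $\ell$, then Lemma~\ref{inner_product} shows that this inner product is $\ell$ times the one belonging to the underlying combinatorial graph, so $\det M=\ell^{\,b_1}\kappa(G)$ with $b_1=\dim H_1(G,\R)$. The integer $b_1$ is already determined by the spectrum of $\Delta_0$ (see the discussion preceding Theorem~\ref{Bloch_H1}), and $\det M$ is determined by Theorem~\ref{determine_Albanese}; so it suffices to recover the single real number $\ell$, after which $\kappa(G)=\ell^{-b_1}\det M$.

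To pin down $\ell$ I would examine the shortest lengths occurring in the trace formula (Corollary~\ref{fourier_trace_formula}), organised as in the proof of Theorem~\ref{Bloch_H1}: at each length $l$ the coefficient $\mathcal{A}^l(t)$ splits into a homologically trivial constant part and a sum of cosines coming from the homologically non-trivial orbits. Since $G$ is equilateral, every periodic orbit has length an integer multiple of $\ell$. A loop, if present, is a periodic orbit of length exactly $\ell$ contributing a frequency, so then $\ell$ is the shortest length at which any frequency appears. If $G$ is loopless, the shortest periodic orbits are the out-and-back traversals of single edges, of length $2\ell$; the only closed walks of this length that are homologically trivial are these out-and-backs, and since no vertex has degree $2$ the scattering coefficients $\sigma_v=-1+\frac{2}{\deg v}$ at the two ends of such an edge have equal sign, so all out-and-back orbits contribute with the same sign to the constant part of $\mathcal{A}^{2\ell}(t)$, which is therefore non-zero. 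In either case the shortest length occurring determines $\ell$, the two cases being distinguished by whether that length carries a frequency, and the corollary follows.

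The only point requiring care, and hence the main obstacle, is this last step: one must be certain that the shortest relevant orbit is actually visible in the trace formula, which is exactly where the standing hypothesis forbidding degree-$2$ vertices enters (it keeps the out-and-back coefficients away from $0$ and of one sign); when leaves are present one first passes to the leafless part of the graph, which changes neither $\ell$, nor $\kappa(G)$, nor $\mathrm{Alb}(G)$. Everything else -- rescaling $M$ by $\ell$ and quoting \cite{KotaniSunada00} -- is routine, the substantive input being the recovery of the Riemannian Albanese torus in Theorem~\ref{determine_Albanese}.
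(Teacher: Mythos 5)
Your core argument is exactly the paper's: Theorem \ref{determine_Albanese} hands you $Alb(G)$ as a flat Riemannian torus, and the Kotani--Sunada theorem identifies the complexity of a combinatorial graph with the volume of its Albanese torus (equivalently, with the Gram determinant of an integral basis of $H_1(G,\Z)$). The paper then simply observes that the Albanese torus of an equilateral quantum graph coincides with that of the underlying combinatorial graph and stops there; implicitly, ``equilateral'' is being read as ``all edges of length $1$''. Under that reading your first paragraph already is the proof and the rest is unnecessary.

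Where you diverge is in trying to handle a general common edge length $\ell$, undoing the scaling $\det M=\ell^{b_1}\kappa(G)$. The concern is legitimate under the broader reading of ``equilateral'' (for a bipartite graph such as $K_{3,3}$ the minimal homology lengths are $4\ell$ and $6\ell$, so they only determine $2\ell$), but your mechanism for extracting $\ell$ has two genuine gaps. First, the dichotomy ``the shortest support length carries a frequency, hence it is a loop of length $\ell$'' fails for loopless graphs containing a double edge: there the shortest support length is $2\ell$ and it \emph{does} carry a frequency, since the digon is a homologically nontrivial cycle of length $2\ell$; your rule would then return $2\ell$ in place of $\ell$ and $\kappa(G)$ off by a factor of $2^{b_1}$. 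Second, ``passing to the leafless part of the graph'' is not an operation available from the Bloch spectrum: the out-and-back orbits on leaf edges (with $\sigma=+1$ at the leaf and a negative factor at the other end) still sit inside $\mathcal{A}^{2\ell}(t)$ with sign opposite to that of the interior edges and can cancel the constant part, and you have no way to compute the trace formula of the pruned graph from the given spectral data. Either adopt the paper's normalization $\ell=1$, or repair the $\ell$-recovery step (e.g.\ by separating the constant and oscillatory parts of $\mathcal{A}^{L_0}(t)$ at the shortest support length $L_0$ rather than merely asking whether a frequency is present).
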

\begin{proof}
This follows directly from a theorem in \cite{KotaniSunada00}. For combinatorial graphs the complexity of the graph is given by 
$K(G)=\sqrt{vol(Alb(G))}$. The Albanese torus of an equilateral quantum graph is identical to the Albanese torus of 
the underlying combinatorial graph.
\end{proof}

\begin{remark}
Leaves in a graph are invisible to the homology. So it is not clear whether the entire Bloch spectrum gives us any more information about them 
than the spectrum of a single Schr\"odinger type operator. There are examples of tree graphs that are isospectral for the standard Laplacian,
 see for example \cite{GutkinSmilansky01}. 
\end{remark}

\section{Determining graph properties}

We will now use the information gained in the last section and translate it into graph properties that are determined by the Bloch spectrum.

\begin{remark}
The Albanese torus distinguishes the isospectral examples of van Below in \cite{vonBelow99}. Thus the spectrum of a single Schr\"odinger type 
operator does not determine the Albanese torus. In one of the two graphs  
two periodic orbits of length $3$ can be composed to get a periodic orbit of length $4$. Thus the lattice that corresponds to the Albanese 
torus contains two vectors of length $3$ whose sum has length $4$. In the other graph this is not the case. In particular these two graphs are 
not Bloch isospectral by \ref{determine_Albanese}.
\end{remark}

\subsection{The block structure}

\begin{theorem}
\label{block_structure}
 One can recognize the block structure, see definition \ref{define_block}, of a leafless graph from the Bloch spectrum. 
It also determines the dimension of the homology of each block.
\end{theorem}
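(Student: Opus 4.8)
The plan is to extract the block structure from the combinatorial data that Theorem~\ref{Bloch_H1} and Theorem~\ref{determine_Albanese} make available, namely the lengths of minimal periodic orbits of homology classes, the overlap information between minimal cycles (Remark~\ref{check_overlap_sign}), and the recognition of which frequencies correspond to actual cycles (Lemma~\ref{recognize_cycle}). First I would build the set $\mathcal{C}$ of minimal cycles: using Lemma~\ref{recognize_cycle} we can decide, for each frequency $\mu$, whether its minimal periodic orbit is a cycle, and Theorem~\ref{Bloch_H1} gives us $l(\mu)$. For any two such cycle-frequencies $\mu_i,\mu_j$, Remark~\ref{check_overlap_sign} lets us detect whether the associated cycles $c_i,c_j$ share an edge at all (this happens precisely when $l(\mu_i+\mu_j)<l(\mu_i)+l(\mu_j)$ or $l(|\mu_i-\mu_j|)<l(\mu_i)+l(\mu_j)$), and in that case whether the overlap is positive, negative, or both. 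The key combinatorial fact (Remark~\ref{cycle_single_block}) is that two cycles lie in the same block if and only if they share an edge, or are connected through a chain of cycles each consecutive pair of which shares an edge. So the plan is: form the graph whose vertices are the cycle-frequencies and whose edges join two frequencies whose cycles overlap; the connected components of this graph are exactly the blocks of $G$.

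Having identified which cycles belong to which block, the next step is to recover, for each block $\mathfrak{b}$, the dimension of its homology and the way the blocks are glued. The dimension of $H_1$ of a block is the rank of the subgroup of $H_1(G,\Z)$ generated by the cycle-frequencies assigned to that block (these ranks sum to $\dim H_1(G,\Z)$ because the block decomposition of $H_1$ is a direct sum, each cycle being confined to one block). To assemble the block structure as a graph-with-fat-vertices, I need to know which blocks share a cut vertex. For a leafless graph every edge lies in some block, so the whole graph is covered by its blocks; two blocks meet in at most one vertex (a cut vertex), and the incidence pattern of blocks at cut vertices is precisely the tree-like structure of Definition~\ref{define_block}. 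The plan is to recover this incidence by a length/distance argument: the total length $\mathcal{L}$ of the graph is known from the trace formula (Theorem~\ref{trace_formula}), and within each block the lengths of all edges that lie on some cycle are constrained by the minimal-cycle-length data; one then argues that the ``connector'' information between blocks — i.e.\ which blocks are chained to which — is forced by comparing lengths of minimal periodic orbits that must traverse several blocks against sums of within-block cycle lengths, in the spirit of Lemma~\ref{recognize_cycle} and the star-decomposition argument of Lemma~\ref{tree_from_leaves}.

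The main obstacle I expect is the gluing step rather than the block-partition step. Partitioning the cycles into blocks via the overlap relation is essentially immediate from Remarks~\ref{cycle_single_block} and~\ref{check_overlap_sign}. The delicate part is showing that the Bloch spectrum distinguishes, say, two blocks joined at a single cut vertex from the same two blocks joined by a short bridging edge, and more generally that it pins down the tree of attachments together with the (homologically invisible) bridging segments — since leaves and degree-considerations aside, homology alone cannot see a tree of edges connecting blocks. The resolution should be that a leafless graph has no such purely-tree appendages forced by the hypothesis (every edge not on a cycle would, in a leafless graph, still have to connect two cycle-containing parts, and its length enters the trace formula through periodic orbits that backtrack along it), so the relevant lengths can be extracted from the non-minimal periodic-orbit data; making this precise, and handling the ambiguity that several bridging configurations might a priori give the same length multiset, is where the real work lies. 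I would therefore spend most of the proof on a careful analysis of the shortest periodic orbits linking two blocks, showing that their lengths, read off as in Theorem~\ref{Bloch_H1}, determine both the block-adjacency pattern and the bridging lengths, and hence the full block structure together with $\dim H_1$ of each block.
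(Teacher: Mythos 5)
Your first half --- partitioning the cycle-frequencies into blocks via the edge-sharing relation detected through $l(\mu_i+\mu_j)<l(\mu_i)+l(\mu_j)$ or $l(|\mu_i-\mu_j|)<l(\mu_i)+l(\mu_j)$, and reading off $\dim H_1$ of each block from the generators landing in each class --- is essentially the paper's argument (the paper works with a fixed minimal generating set of cycle-frequencies rather than all cycles, which gives the dimension count directly as the number of generators per class). The genuine gap is in the gluing step, which you correctly identify as the crux but leave unresolved. The paper fills it with a concrete construction using only \emph{minimal} periodic-orbit lengths: for two blocks $B_1,B_2$ with frequency sets $\{\mu^1_i\}$, $\{\mu^2_{i'}\}$, the minimal orbit in the class $[c^1_i]+[c^2_{i'}]$ must traverse the connecting path between the two cycles twice, so
\begin{equation*}
d(B_1,B_2):=\tfrac{1}{2}\min_{i,i'}\bigl( l(\mu^1_i+\mu^2_{i'})-l(\mu^1_i)-l(\mu^2_{i'})\bigr)
\end{equation*}
is the distance between the blocks, and it vanishes exactly when they share a cut vertex --- this is what distinguishes your ``joined at a cut vertex'' from ``joined by a short bridge'' scenario. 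Inner blocks are then detected by failures of the triangle inequality, $d(B_2,B_3)>d(B_1,B_2)+d(B_1,B_3)$, the remaining blocks are grouped according to which cut vertex of an inner block their connecting path attaches to (again by comparing $d(B_i,B_j)$ with $d(B_1,B_i)+d(B_1,B_j)$), and the resulting pieces, in which every block is a leaf, are handed to Lemma \ref{tree_from_leaves}. The leafless hypothesis together with the standing no-degree-2 assumption is what guarantees that every leaf of the block-structure tree is a fat vertex and every remaining inner vertex has degree at least $3$, so that Lemma \ref{tree_from_leaves} applies.

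A second, more specific problem with your fallback plan: you propose to extract the bridging lengths from ``non-minimal periodic-orbit data'' (backtracking orbits along bridge edges). The paper explicitly warns against this --- for non-minimal homologous orbits of equal length the $\nu$-coefficients can cancel in the trace formula, so such orbits need not be observable at all; only the minimal orbit in each homology class is guaranteed to appear, because minimality forces no backtracking and hence strictly positive coefficients. The distance formula above is precisely the device that avoids ever needing non-minimal orbit data.
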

\begin{proof}
 Pick a minimal set of generators $\mu_1, \ldots, \mu_n$ of the group spanned by the frequencies such that the associated minimal 
periodic orbits are all cycles.
 A cycle is necessarily contained within a single block, see \ref{cycle_single_block}. Declare two generators 
 equivalent if the associated cycles share edges regardless of orientation. This generates an equivalence relation. 
Let $\mathfrak{B}$ be the set of equivalence classes, it corresponds to the set of blocks of $G$, see \ref{define_block}. 
The number of generators in each equivalence class is the dimension of the cohomology of that block.

Let $B_1, B_2 \in \mathfrak{B}$. Let $\{\mu^j_i\}_i$ be the subset of frequencies that is $B_j$, $j=1,2$. Then we can find the distance 
between the two blocks by computing 
\begin{equation*}
 d(B_1,B_2):=\frac{1}{2}\min_{i,i'}\left( l(\mu^1_i + \mu^2_{i'})-l(\mu^1_i)-l(\mu^2_{i'}) \right)
\end{equation*}
That is we compute the distance between any basis cycle in one 
block to any basis cycle in the other and minimize over all pairs of basis cycles in the blocks. This distance is zero if and only if the 
blocks share a vertex.

We will now set up a situation where we can apply lemma \ref{tree_from_leaves}. To do so we need to find out which blocks are leaves in 
the block structure and which ones are inner vertices. We will then cut the block structure into smaller pieces such that 
all blocks are leaves in the smaller pieces.

Whenever we have a triple of blocks satisfying $d(B_2,B_3) > d(B_1,B_2)+d(B_1,B_3)$, 
that is a failure of the triangle inequality, 
we know that $B_1$ has to be an inner vertex in the block structure of $G$. The path between the blocks $B_2$ and $B_3$ has to pass through $B_1$ 
and use some edges within the block $B_1$. Once we have identified a block, say $B_1$, as an inner block we can seperate the remaining 
blocks into groups depending on where the path from the block to $B_1$ is attached on $B_1$. If $d(B_i,B_j) > d(B_1,B_i)+d(B_1,B_j)$ then 
the paths from $B_1$ to $B_i$ and $B_j$ 
are attached 
at different cut vertices of $B_1$, if $d(B_i,B_j) \le d(B_1,B_i)+d(B_1,B_j)$ they are attached at the same cut vertex. 
Within each of these groups the block $B_1$ is a leaf in the block structure. 

Thus we have cut the initial block structure into several smaller pieces each of them 
including $B_1$ and $B_1$ is a leaf in each of them.
 We can repeat this process of identifying an inner block and cutting the block structure into smaller pieces on each of these pieces until 
all the pieces have no inner block vertices. This reduces the 
problem to recovering the block structure of graphs where all blocks are leaves. 

All remaining inner vertices have to be vertices of the initial graph $G$ and thus have degree at least $3$. As $G$ is leafless 
all leaves in the block structure are fat vertices. Hence we can recover the tree 
structure of each of the groups of blocks by using lemma \ref{tree_from_leaves}. We can then find the block structure of the entire graph 
by glueing the pieces together at the inner blocks.
\end{proof}

\subsection{Planarity and dual graphs}

\begin{theorem}
\label{planarity}
The Bloch spectrum determines whether or not a graph is planar.
\end{theorem}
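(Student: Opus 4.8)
The plan is to reduce the question of planarity to the existence of a non-positive basis of the homology (equivalently, a simple basis of the cycle space), which by Corollary~\ref{simple=planar} and MacLane's theorem is equivalent to planarity, and then show that the Bloch spectrum gives us enough information about overlaps of cycles to decide whether such a basis exists. By Theorem~\ref{Bloch_H1} the Bloch spectrum determines the group of frequencies together with the length $l(\mu)$ of the minimal periodic orbit(s) attached to each frequency $\mu$, and via Lemma~\ref{recognize_cycle} we can detect exactly which frequencies have a minimal periodic orbit that is an honest cycle. So the combinatorial data we actually have at our disposal is: the abelian group $H_1(G,\Z)$ presented as the frequency group; the subset of elements realised by cycles; and, by Remark~\ref{check_overlap_sign}, for any two such ``cycle frequencies'' $\mu_1,\mu_2$ whether the corresponding cycles have edges of positive overlap, negative overlap, both, or neither (by comparing $l(\mu_1\pm\mu_2)$ with $l(\mu_1)+l(\mu_2)$).

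The next step is to make precise how overlap data for a fixed generating set of cycles controls the existence of a non-positive basis. First I would restrict attention to a single block, since by Remark~\ref{cycle_single_block} every cycle lives in one block, cycles in different blocks overlap trivially, and a graph is planar iff each of its blocks is; the block decomposition is already available from Theorem~\ref{block_structure}. Within a block, pick a minimal generating set $\mu_1,\dots,\mu_k$ of the frequency group whose minimal periodic orbits are cycles $c_1,\dots,c_k$ (possible by Lemma~\ref{basis_of_cycles}). The sign-change trick of Remark~\ref{check_overlap_sign} lets us re-orient the $c_i$ freely and re-read the overlaps: replacing $\mu_i$ by $-\mu_i$ swaps the roles of positive and negative overlap for all pairs involving $i$. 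The claim I want is: $G$ is planar iff there is a choice of orientations (signs $\epsilon_i\in\{\pm1\}$) of these basis cycles, together possibly with a change of basis, making all pairwise overlaps non-positive. One direction is immediate from Corollary~\ref{simple=planar}. For the converse I would argue that if the graph is planar, the facial basis from a planar embedding is non-positive, and then one needs to transfer non-positivity from the (unknown) facial basis to a basis built out of our detectable cycle frequencies — this is where Proposition~\ref{positive=facial} and the fact that any non-positive basis is simple enter, to go back and forth between ``abstract'' simple bases and facial ones.

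The honest technical content, and the step I expect to be the main obstacle, is showing that whether such an orientation/basis exists is actually decidable from the overlap data alone, i.e. that we are not missing information by only seeing $l(\mu_i\pm\mu_j)$ for a single generating set rather than for all cycles simultaneously. The subtlety is that $l(\mu_i+\mu_j)<l(\mu_i)+l(\mu_j)$ only tells us there \emph{exist} edges of negative overlap (the minimal representative of $\mu_i+\mu_j$ is shorter than the concatenation), and similarly for positive overlap; a pair of cycles can have both kinds of overlapping edges, so the four-way comparison does not by itself pin down a ``signed overlap matrix.'' I would handle this by working not just with the fixed generating set but running the frequency/length analysis over \emph{all} cycle frequencies in the block (every element of $H_1(G,\Z)$ whose minimal periodic orbit is a cycle is detected, by Lemma~\ref{recognize_cycle}), and showing that the collection of all these lengths together with all pairwise sum/difference lengths encodes the cycle space of the block as a subspace of the edge space up to $2$-isomorphism — enough to apply MacLane's criterion. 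Concretely: the set of cycle frequencies, ordered by $l$, lets us identify the shortest cycles, build up a basis greedily, and at each stage test non-positivity against already-chosen basis cycles; the graph is planar iff this greedy procedure succeeds in producing a full non-positive basis, and failure is certified exactly as in the proof of Corollary~\ref{simple=planar} by three basis cycles sharing an edge with unavoidable positive overlap. Verifying that the greedy/non-positivity test is well-defined and orientation-consistent across the whole basis — essentially a small linear-algebra-over-$F_2$ plus sign-bookkeeping argument — is the part that will need care.
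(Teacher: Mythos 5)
Your overall strategy is the paper's: reduce planarity to the existence of a homology basis of oriented cycles with no positive overlap (Corollary~\ref{simple=planar}), detect which frequencies are realized by cycles (Lemma~\ref{recognize_cycle}), and detect overlaps by comparing $l(|\gamma_i\pm\gamma_j|)$ with $l(\gamma_i)+l(\gamma_j)$ (Remark~\ref{check_overlap_sign}). The place where you diverge is exactly the step you flag as the main obstacle, and there the paper's route is both simpler and more solid. The ``subtlety'' that a pair of cycles can have both positive and negative overlap, so that the four length comparisons do not pin down a signed overlap matrix, is not actually an obstacle: Corollary~\ref{simple=planar} only asks whether some oriented basis has \emph{no} edge of positive overlap for \emph{any} pair, so all you must decide, for each oriented pair $(\gamma_i,\gamma_j)$, is whether at least one edge of positive overlap \emph{exists} --- and that is precisely what the length comparison detects. (Since $H_1(G,\Z)=Z_1(G,\Z)$ for a graph, each class has a unique chain representative; the $L^1$-norm of $c_i-c_j$ equals $l(c_i)+l(c_j)$ minus twice the total length of the edges of positive overlap, and the minimal closed walk in the class is at least this long, with equality when the support is connected.) No full signed matrix, no block-by-block reduction, and no encoding of the cycle space up to $2$-isomorphism is needed.

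The genuine gap in your write-up is the proposed resolution itself: the claim that ``the graph is planar iff this greedy procedure succeeds'' is unsupported, and greedy constructions of simple bases are not obviously valid --- a greedy choice of short non-positively-overlapping cycles could get stuck even when a non-positive basis exists, so greedy failure does not certify non-planarity. The paper sidesteps this entirely with a finiteness observation: a graph has only finitely many cycles, hence only finitely many bases of the homology consisting of cycles, hence only finitely many minimal generating sets of the frequency group whose minimal periodic orbits are cycles, each with finitely many orientation choices. One simply enumerates all of them and tests each oriented pair for existence of positive overlap; the graph is planar iff at least one candidate passes. Replace your greedy step with this exhaustive enumeration and your argument closes.
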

\begin{proof}
The homology admits infinitely many bases, but a graph has only finitely many cycles and thus there are only finitely many 
bases consisting of cycles. Thus there are only finitely many minimal sets of generators $\mu_1, \ldots, \mu_n$ of the group spanned by 
the frequencies 
such that the minimal periodic orbits associated to them are all cycles. Given such a basis we can choose for each generator to either 
keep the orientation induced by $\alpha$ or choose the reverse orientation. Denote the basis 
elements with a choice of orientation 
by $\gamma_1=\pm \mu_1, \ldots, \gamma_n=\pm \mu_n$. For any pair of oriented basis elements $\gamma_i$ and $\gamma_j$ we can check 
whether the associated cycles have edges of positive overlap by checking whether $l(|\gamma_i+\gamma_j|)-l(|\gamma_i|)-l(|\gamma_j|)>0$, see 
\ref{check_overlap_sign}. Thus we can check whether the 
$\gamma_1, \ldots, \gamma_n$ correspond to a basis of the homology that consists of oriented cycles having no positive overlap.
The graph is planar if and only if we can find such a basis by theorem \ref{simple=planar}. 
\end{proof}

\begin{remark}
 Planarity is a property that is not determined by the spectrum of a single Schr\"odinger type operator. There is an example of two isospectral 
quantum graphs in \cite{vonBelow99} where one is planar and the other one is not.
\end{remark}

If the graph is planar we will fix a non-positive basis $\gamma_1, \ldots, \gamma_n$ (see definition \ref{non-positive_basis}) coming 
from the frequencies $\mu_1, \ldots, \mu_n$.
We know that the basis elements are the boundaries of the inner faces in a suitable embedding of the graph by lemma 
\ref{positive=facial}. We will use this fact to construct an abstract dual of the graph.

\begin{theorem}
\label{abstract_dual}
 The Bloch spectrum of a planar, 2-connected graph determines a dual of the graph. Thus the Bloch spectrum determines 
planar, 2-connected graphs up to 2-isomorphism (see lemma \ref{2-isomorphic}).
\end{theorem}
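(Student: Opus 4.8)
The plan is to reconstruct a graph $G^*$ whose cycles correspond exactly to the minimal cuts of $G$, directly from the combinatorial data that Theorem~\ref{Bloch_H1} extracts from the Bloch spectrum. By Proposition~\ref{positive=facial}, the fixed non-positive basis $\gamma_1,\ldots,\gamma_n$ is realized as the set of inner-face boundaries of some planar embedding of $G$; together with the outer face this gives all $n+1$ faces. So the vertex set of the geometric dual $G^*$ should be $\{\gamma_1,\ldots,\gamma_n,\gamma_0\}$, where $\gamma_0$ denotes the (unknown) outer face, and two of these "faces" are joined by one edge of $G^*$ for each edge of $G$ they share. The task is therefore twofold: (i) determine, for each pair $\gamma_i,\gamma_j$ with $i,j\ge 1$, how many edges they share and their total length, and (ii) account for the outer face $\gamma_0$, whose boundary we do not see directly among the frequencies.

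For step (i), I would use Remark~\ref{check_overlap_sign} and the length data $l(\mu)$. Having oriented all basis cycles so that the basis is non-positive, any two basis cycles $\gamma_i,\gamma_j$ have only edges of negative overlap (or none). Then $l(\mu_i)+l(\mu_j)-l(\mu_i+\mu_j)$ equals twice the total length of the shared edges (this is exactly the inner-product computation in the proof of Theorem~\ref{determine_Albanese}, specialized to the non-positive case). Whether this quantity is positive tells us whether $\gamma_i$ and $\gamma_j$ are adjacent faces. To get the *number* of shared edges rather than just their total length one can examine third-order overlaps, or — more robustly — note that in a $2$-connected planar graph two inner faces meet along a single edge or along a path of degree-$2$ vertices, but since $G$ has no degree-$2$ vertices, two distinct inner faces share at most one edge. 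Hence adjacency in $G^*$ among the $\gamma_i$, $i\ge1$, is completely determined: put one edge of $G^*$ between $\gamma_i$ and $\gamma_j$ precisely when $l(\mu_i)+l(\mu_j)-l(\mu_i+\mu_j)>0$, and label it with that length divided by two.

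For step (ii), the edges of $G$ lying on the outer face are exactly those that belong to only one of the inner-face boundaries $\gamma_1,\ldots,\gamma_n$; equivalently, each edge $e$ of $G$ appears in exactly two of the $n+1$ faces, so an edge in exactly one $\gamma_i$ ($i\ge1$) must also lie on $\gamma_0$. Using the total length $\mathcal L$ of the graph (available from the trace formula, Theorem~\ref{trace_formula}) together with the sum $\sum_{i\ge1}l(\mu_i)$ and the pairwise overlap lengths just computed, inclusion–exclusion recovers the total length of edges incident to $\gamma_0$ and, because shared edges are unique, the incidences $\gamma_0\!\sim\!\gamma_i$ one at a time. This yields the full dual graph $G^*$ with an edge bijection $\psi:E(G)\to E(G^*)$ sending each edge of $G$ to the $G^*$-edge joining its two faces; by the Proposition on dual graphs, $\psi$ carries cycles of $G$ to minimal cuts of $G^*$, so $G^*$ is an abstract dual of $G$. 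Finally, by Lemma~\ref{2-isomorphic} knowing the set of abstract duals of a planar graph determines it up to $2$-isomorphism, which gives the stated conclusion.

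The main obstacle is step (ii): correctly identifying the outer face and its incidences without ever "seeing" it in the spectrum. The danger is that different planar embeddings of $G$ (for a non-$3$-connected graph) have different outer faces, and a priori the Bloch data might be consistent with several choices; one must check that whatever outer face is consistent with the overlap/length bookkeeping yields an abstract dual, and that the ambiguity is exactly the $2$-isomorphism ambiguity already present — no better and no worse. A secondary subtlety is justifying that no two inner faces of $G$ share more than one edge, which is where the standing assumption that $G$ has no degree-$2$ vertices, combined with $2$-connectedness, is essential; without it the counts of $G^*$-edges would be underdetermined by length data alone.
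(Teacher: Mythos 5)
Your high-level plan (take the faces $\gamma_0,\gamma_1,\ldots,\gamma_n$ as the dual's vertices, join two of them by one dual edge per shared edge of $G$, then invoke Lemma~\ref{2-isomorphic}) is the same as the paper's, but your step (i) rests on a false claim: that in a $2$-connected planar graph with no degree-$2$ vertices two distinct faces share at most one edge. Counterexample: join $u$ and $v$ by two parallel edges $e_1,e_2$ and, inside the lens they bound, place a copy of $K_4$ on $\{x,y,z,w\}$ attached by the edges $ux$ and $yv$. This graph is planar, $2$-connected and has minimum degree $3$, yet the two faces lying on either side of the chain $u$--$x$--$(K_4)$--$y$--$v$ both have $ux$ and $yv$ (two vertex-disjoint edges) on their boundaries. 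Consequently $l(\mu_i)+l(\mu_j)-l(\mu_i+\mu_j)$ only recovers the \emph{total length} of the shared edges, not their \emph{number}, and the number is exactly the edge multiplicity the dual requires. The paper closes this gap with Lemma~\ref{decomposition}: the minimal periodic orbit for $\mu_i+\mu_j$ is split, by repeated application of Lemma~\ref{recognize_cycle}, into $k$ edge-disjoint cycles $c_1,\ldots,c_k$, where $k$ is the number of common components of $\gamma_i$ and $\gamma_j$; the components that are single vertices rather than edges are then detected by testing which pairs satisfy $l(c_l+c_{l'})=l(c_l)+l(c_{l'})$. (This decomposition is also where planarity is genuinely used; the paper notes the lemma fails for $K_{3,3}$.)

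Your step (ii) is the second gap, and you rightly flag it as the main obstacle, but the resolution you sketch does not go through: inclusion--exclusion on lengths yields only the total length of edges on the outer face, and passing from that to the individual incidences $\gamma_0\sim\gamma_i$ again leans on the false ``at most one shared edge'' claim. The paper's fix is a single observation that removes the asymmetry entirely: in the embedding where $\gamma_1,\ldots,\gamma_n$ bound the inner faces, the outer face boundary is $\gamma_0=-\sum_{l=1}^{n}\gamma_l$ in homology. Hence $\gamma_0$ corresponds to a frequency already determined by the Bloch data and can be fed into Lemma~\ref{decomposition} on exactly the same footing as the inner faces, giving all pairwise edge counts among $\gamma_0,\ldots,\gamma_n$ at once.
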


Before we show this we need two lemmata.

\begin{lemma}
Let $G$ be planar and $2$-connected. In the embedding where the $\gamma_1, \ldots, \gamma_n$ are the boundaries of the inner faces the 
boundary of the outer face is given by 
\begin{equation*}
\gamma_0:=-\sum_{l=1}^{n}\gamma_l      
\end{equation*}
The sign orients it so that it does not have edges of positive overlap with any of the $\gamma_l$.
\end{lemma}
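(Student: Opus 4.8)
The plan is to work inside the chosen planar embedding of $G$ in which the basis cycles $\gamma_1,\dots,\gamma_n$ are precisely the boundaries of the $n$ bounded faces (which exists by Proposition~\ref{positive=facial}), and to identify the cycle $\gamma_0$ defined in the statement with the boundary of the outer face. The key observation is a counting/parity argument about how many of the bounded faces lie on each side of a given edge: since $G$ is $2$-connected and planar, every edge $e$ is incident to exactly two faces. First I would split into two cases. If both faces incident to $e$ are bounded, say the faces bounded by $\gamma_i$ and $\gamma_j$ with $i,j\in\{1,\dots,n\}$, then $e$ occurs once in $\gamma_i$ and once in $\gamma_j$; orienting both counterclockwise makes them traverse $e$ in opposite directions (this is exactly the no-positive-overlap statement of Corollary~\ref{simple=planar} / Definition~\ref{non-positive_basis}), so in the formal sum $\sum_{l=1}^n \gamma_l \in C_1(G,\Z)$ the edge $e$ cancels and does not appear in $\gamma_0=-\sum_{l=1}^n\gamma_l$. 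If exactly one incident face is bounded, bounded by $\gamma_i$, and the other is the outer face, then $e$ appears exactly once in $\sum_{l=1}^n\gamma_l$, hence exactly once (with the opposite sign) in $\gamma_0$. The edges of the latter type are by definition the edges on the boundary of the outer face.

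So the heart of the argument is: (i) verify $\gamma_0$ is a genuine element of $H_1(G,\Z)$ — it is, being an integer combination of cycles, hence a cycle in the homological sense; (ii) show that its support is exactly the edge set of the outer face boundary and that on each such edge its coefficient is $\pm1$, which follows from the case analysis above; and (iii) conclude that $\gamma_0$, as an element of the cycle space, is represented by the single graph-theoretic cycle bounding the outer face. For step (iii) I would invoke that in a $2$-connected plane graph the boundary of every face (including the outer one) is a cycle (this is the standard fact cited around Corollary~\ref{simple=planar} from \cite{Diestel05}); since $\gamma_0$ and the outer face boundary cycle have the same edge support with matching coefficients in $C_1(G,\Z)$, they are the same $1$-chain, so $\gamma_0$ is that cycle up to the overall sign fixed in the statement.

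For the orientation claim — that the sign chosen makes $\gamma_0$ have no edges of positive overlap with any $\gamma_l$ — I would argue as follows. Each edge $e$ shared by $\gamma_0$ and some $\gamma_i$ is one of the edges of the second type above: incident to the outer face and to the bounded face of $\gamma_i$. In $C_1(G,\Z)$ we have $\gamma_0 + \sum_{l=1}^n\gamma_l = 0$, so on $e$ the coefficient of $\gamma_0$ is the negative of the coefficient of $\gamma_i$ (the only $\gamma_l$ through $e$). Negative coefficient relative to $\gamma_i$ means they traverse $e$ in opposite directions, i.e. negative overlap, not positive. Hence $\gamma_0$ together with $\gamma_1,\dots,\gamma_n$ still forms a non-positive family, as asserted.

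The main obstacle I anticipate is making precise the passage between the combinatorial face-boundary description and the homological statement, i.e. being careful that "orient all bounded faces counterclockwise" really yields the algebraic cancellation claimed, and that the outer-face boundary is a single cycle rather than a more complicated closed walk — both of which are exactly where $2$-connectedness is used. Everything else is bookkeeping in $C_1(G,\Z)$: one just has to track each edge's multiplicity in $\sum_{l=1}^n\gamma_l$ using the fact that each edge borders exactly two faces.
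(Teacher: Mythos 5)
The paper states this lemma without any proof at all, so there is nothing to compare against; your write-up supplies the missing argument, and it is correct and is the natural one. The edge-by-edge case analysis (interior edges cancel because the two incident bounded faces, being non-positively oriented, traverse them oppositely; outer-boundary edges survive with coefficient $\pm1$) is exactly right, and the orientation claim follows, as you say, from $\gamma_0+\sum_l\gamma_l=0$ read off on each shared edge. The only step I would make slightly more explicit is your point (iii): having shown that $\gamma_0$ is a $1$-cycle supported on the edge set of the outer face boundary $C$ with all coefficients $\pm1$, you should note that $\partial\gamma_0=0$ at each vertex of $C$ (where exactly two edges of $C$ meet, since $C$ is a graph cycle by $2$-connectedness) forces one of those edges to point in and one to point out, so the coefficients automatically assemble into a coherent orientation of $C$; ``same support with matching coefficients'' is the conclusion of that little argument rather than something given in advance. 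With that sentence added, the proof is complete.
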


\begin{lemma}
\label{decomposition}
Let $G$ be planar and $2$-connected. 
Then we can determine the number of edges that any two of the cycles $\gamma_0, \ldots, \gamma_n$ have in common.
\end{lemma}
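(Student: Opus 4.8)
The plan is to convert the count of common edges into a topological count of boundary components and then to read that number off from the Albanese inner product (Theorem \ref{determine_Albanese}) together with the cycle-recognition and overlap-sign tests of Lemma \ref{recognize_cycle} and Remark \ref{check_overlap_sign}. Write $F_i$ for the face bounded by $\gamma_i$, and let $k_{ij}$ denote the number of edges common to $\gamma_i$ and $\gamma_j$. First I would exploit the standing hypothesis that $G$ has no vertex of degree $2$: since $G$ is $2$-connected, each face is bounded by a simple cycle and each edge lies on exactly two faces. I claim no two faces can share a pair of \emph{adjacent} edges. If edges $e,e'$ common to $\gamma_i,\gamma_j$ met at a vertex $w$, then both faces incident to $e$ and both incident to $e'$ would equal $F_i,F_j$, leaving no room for a further edge at $w$ and forcing $\deg w=2$. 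Hence the common edges of $\gamma_i$ and $\gamma_j$ are pairwise non-adjacent, so $k_{ij}$ equals the number of maximal common arcs.

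A short Euler-characteristic computation then identifies this number topologically. Gluing the two closed disks $F_i,F_j$ along $k_{ij}$ disjoint arcs yields a connected planar region of Euler characteristic $2-k_{ij}$, that is, a disk with $k_{ij}-1$ holes, whose boundary has exactly $k_{ij}$ components. Because the chosen basis is non-positive, the common edges carry opposite orientations in $\gamma_i$ and $\gamma_j$ and so cancel in the chain $\gamma_i+\gamma_j$; its support is precisely this boundary, and therefore $\gamma_i+\gamma_j$ is represented by $k_{ij}$ pairwise edge-disjoint simple cycles $C_1,\dots,C_{k_{ij}}$. Determining $k_{ij}$ is thus the same as counting the cycle components of the class $\gamma_i+\gamma_j$.

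For the spectral extraction, each $C_m$ is an honest cycle of $G$, hence recognizable among the frequencies by Lemma \ref{recognize_cycle}, with Albanese length $l(\mu_{C_m})=\|[C_m]\|^2$. From Theorem \ref{determine_Albanese} the full Albanese Gram matrix of the basis is known, so I can compute both the common length $\ell_{ij}=-\langle[\gamma_i],[\gamma_j]\rangle$ (recall that $\langle\,,\rangle$ equals positive minus negative overlap length, and positive overlaps vanish for a non-positive basis) and the total boundary length $L=\|[\gamma_i]+[\gamma_j]\|^2=l(\mu_i)+l(\mu_j)-2\ell_{ij}$; pairs involving the outer boundary $\gamma_0=-\sum_l\gamma_l$ are handled identically, its class and inner products being determined by linearity. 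I would then define $k_{ij}$ to be the largest integer $k$ for which there exist classes $C_1,\dots,C_k$, each recognized as a cycle, pairwise edge-disjoint (tested by the simultaneous vanishing of both overlaps in Remark \ref{check_overlap_sign}), with $\sum_m[C_m]=[\gamma_i]+[\gamma_j]$ and $\sum_m l(\mu_{C_m})=L$. The boundary components realize such a family with $k=k_{ij}$, and since the length constraint forces any such family to cover exactly $\mathrm{supp}(\gamma_i+\gamma_j)$, whose connected components are themselves single simple cycles, no family with more than $k_{ij}$ members can occur.

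The main obstacle is the well-definedness of this last step. Unlike the case $k_{ij}=1$, where $\gamma_i+\gamma_j$ is itself a cycle and is detected directly, for $k_{ij}\ge 2$ the minimal periodic orbit of the class $\gamma_i+\gamma_j$ is a single connected walk that does not exhibit the disconnection, so the count must be recovered indirectly through the edge-disjoint decomposition above. The delicate points I expect to fight with are proving that $\sum_m l(\mu_{C_m})=L$ together with edge-disjointness really pins the family down to the boundary components, and handling ``pinch'' vertices where a boundary component fails to be a simple cycle; there I would use that a connected $2$-regular subgraph is a single cycle and that the planar structure supplies a canonical simple-cycle decomposition, so that the cardinality $k_{ij}$ is unaffected.
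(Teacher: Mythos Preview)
Your core idea---decompose the class $[\gamma_i+\gamma_j]$ into simple cycles using Lemma~\ref{recognize_cycle} and count them---is exactly the paper's strategy, and your preliminary observation that the no-degree-$2$ hypothesis forces each maximal common arc to be a single edge is correct and is used (implicitly) in the paper as well. However, there is a genuine gap in your count.

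Two face boundaries $\gamma_i,\gamma_j$ in a $2$-connected planar graph can meet not only in edges but also in \emph{isolated vertices}: at a vertex $w$ of degree $\ge 4$ the rotation can place $F_i$ and $F_j$ at non-adjacent corners, so $w$ lies on both cycles while no edge at $w$ is shared. Your Euler-characteristic computation glues the two disks only along the $k_{ij}$ shared edges and concludes that $\gamma_i+\gamma_j$ splits into exactly $k_{ij}$ simple cycles. But each isolated shared vertex is an additional gluing point (a genuine pinch), and it increases the number of simple cycles in the decomposition by one. Concretely, if $\gamma_i$ and $\gamma_j$ share $a$ edges and $b$ isolated vertices, then the support of $\gamma_i+\gamma_j$ decomposes into $a+b$ simple cycles, not $a$; your ``largest $k$'' would return $a+b$. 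Your closing remark that ``the cardinality $k_{ij}$ is unaffected'' by pinch vertices is therefore the step that fails.

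The paper's proof anticipates exactly this. After decomposing the minimal periodic orbit of $\mu_i+\mu_j$ into cycles $c_1,\dots,c_k$ via repeated use of Lemma~\ref{recognize_cycle}, it observes that whenever the corresponding intersection component is an isolated vertex (rather than an edge), the two neighbouring $c_l$'s share that vertex and hence satisfy $l(c_l+c_{l'})=l(c_l)+l(c_{l'})$; when the component is an edge, the neighbouring $c_l$'s are at positive distance. So one tests every pair $c_l,c_{l'}$ for distance zero, counts those incidences, and subtracts them from $k$ to obtain the number of common \emph{edges}. Adding precisely this vertex-sharing test to your argument would close the gap and bring it in line with the paper's proof.
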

\begin{proof}
Recall that $\mu_i=|\gamma_i|$. If $l(\mu_i+\mu_j) \ge l(\mu_i)+l(\mu_j)$ then $\gamma_i$ and $\gamma_j$ share no edges. 
We will assume from now on that $l(\mu_i+\mu_j) < l(\mu_i)+l(\mu_j)$. 
Suppose $\gamma_i$ and $\gamma_j$ share $k$ 
edges or single vertices. 
\begin{figure}[h]
\centering
\scalebox{0.6}{\includegraphics{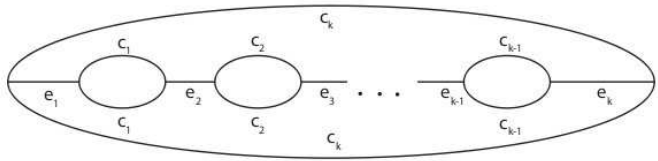}}
\caption{the graph with the two cycles $\gamma_i$ and $\gamma_j$}
\end{figure}

Here $\gamma_i$ and $\gamma_j$ bound the two big faces and the $e_l$ are the edges or single vertices these two cycles share. The remainder 
of the graph is contained inside the small cycles labeled $c_l$, $l=1,\ldots ,k-1$ and outside the big cycle $c_k$.

We can decompose the minimal periodic orbit associated to the frequency $\mu_i+\mu_j$ into several cycles $c_1, \ldots, c_k$ by applying 
lemma \ref{recognize_cycle} repeatedly. These cycles do not share any edges, they 
can share vertices. If the decomposition yields $k$ cycles, then $\gamma_i$ and $\gamma_j$ have $k$ distinct 
components in common. Each of these components is either a single edge or a vertex. Whenever it is a vertex that means that two of the $c_l$ have 
this vertex in common and thus have distance zero from each other. As we can check for any pair $c_l$ and $c_{l'}$ whether 
$l(c_l+c_{l'})=l(c_l)+l(c_{l'})$ we can find all instances where this happens. All remaining common components then must correspond to a common
edge of $\gamma_i$ and $\gamma_j$.
\end{proof}

\begin{remark}
 Lemma \ref{decomposition} is false without the planarity assumption. There exist two cycles $\gamma_1, \gamma_2$ in $K_{3,3}$ that 
share $3$ edges but $\gamma_1 \cup \gamma_2$ is homologous to a single cycle. These two cycles have no edges of positive overlap.
\end{remark}

\begin{proof}
 of theorem \ref{abstract_dual}\\
The cycles $\gamma_0, \ldots, \gamma_n$ are the set of all boundaries of faces in a suitable embedding of the graph. Therefore they are 
the vertices of a geometric dual. By 
lemma \ref{decomposition} we know the number of edges any two of these faces have in common, which corresponds to the number of edges 
between the two vertices in the geometric dual.
\end{proof}

The particular geometric dual we get from this process depends on the non-positive basis we have chosen.

\begin{corollary}
 The Bloch spectrum identifies and determines planar, $3$-connected graphs combinatorially. 
\end{corollary}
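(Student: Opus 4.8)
The plan is to combine the planarity test of theorem \ref{planarity}, the block structure recovered in theorem \ref{block_structure}, and the dual graph extracted in theorem \ref{abstract_dual} with the classical fact that a $3$-connected planar graph is determined up to isomorphism by any of its duals. In outline: from the Bloch spectrum one first recognizes that $G$ is planar and $2$-connected, then writes down an explicit combinatorial dual $G^{*}$, checks whether $G^{*}$ is $3$-connected, and if so recovers $G$ as the (unique) dual of $G^{*}$.

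In detail, first I would apply theorem \ref{planarity} to decide from the Bloch spectrum whether $G$ is planar; if it is not, it is not a planar $3$-connected graph and there is nothing more to do. Assuming planarity, note that a $3$-connected graph has minimum degree at least $3$ and hence is leafless, so theorem \ref{block_structure} recovers the block structure, and $G$ is $2$-connected exactly when the block structure is a single fat vertex. When $G$ is planar and $2$-connected, theorem \ref{abstract_dual} produces from the Bloch spectrum an explicit finite graph $G^{*}$ that is a geometric (hence abstract) dual of $G$; this $G^{*}$ depends on the choice of non-positive basis of cycles, but any two admissible choices give $2$-isomorphic duals by lemma \ref{2-isomorphic}. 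Since $G^{*}$ is now a fully specified combinatorial graph, I would test whether it is $3$-connected. Using the classical theory of dual graphs (the geometric dual of a planar graph is an abstract dual, $G$ is an abstract dual of $G^{*}$, the dual of a $3$-connected planar graph is again $3$-connected, and a $3$-connected planar graph has a dual unique up to isomorphism), one obtains that $G^{*}$ is $3$-connected if and only if $G$ is, and that in this case $G$ is isomorphic to the geometric dual $(G^{*})^{*}$. Hence the property ``$G$ is planar and $3$-connected'' is read off from the Bloch spectrum, and when it holds $G$ is reconstructed combinatorially as $(G^{*})^{*}$, computed from any planar embedding of $G^{*}$.

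The step that requires the most care is the bookkeeping around the choice of non-positive basis: the dual $G^{*}$ of theorem \ref{abstract_dual} genuinely depends on it, and the argument goes through only because all these duals are mutually $2$-isomorphic and, once $G$ is $3$-connected, abstract duals are unique up to isomorphism, so the ambiguity collapses. The auxiliary hypotheses used along the way (leaflessness for theorem \ref{block_structure}, $2$-connectedness for theorem \ref{abstract_dual}) are automatic, since a $3$-connected graph has neither leaves nor cut vertices; so within the class of leafless graphs both the recognition and the combinatorial reconstruction succeed. I expect no essentially new difficulty beyond what theorem \ref{abstract_dual} and the cited uniqueness theorem for duals of $3$-connected planar graphs already supply; the corollary is the assembly of these facts.
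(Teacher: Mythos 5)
Your proposal is correct and follows essentially the same route as the paper: recognize planarity and $2$-connectedness via theorems \ref{planarity} and \ref{block_structure}, build the dual from theorem \ref{abstract_dual}, and invoke uniqueness of duals of $3$-connected planar graphs to recover $G$ as the dual of the dual. The only cosmetic difference is that you test $3$-connectedness on $G^{*}$ (using that duality preserves $3$-connectedness) whereas the paper tests it on the dual of the dual, and you make explicit the $2$-isomorphism bookkeeping for the choice of non-positive basis that the paper only remarks on; both are sound.
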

\begin{proof}
Note that $3$-connected implies $2$-connected, see definition \ref{k-connected}.
A graph is $2$-connected if its block structure 
consists of a single fat vertex. We have shown that we can identify $2$-connected planar graphs in theorem \ref{planarity}. 
We found a geometric dual of a $2$-connected planar graph in theorem \ref{abstract_dual}. If the dual of the dual is 
$3$-connected it will be unique and therefore isomorphic to the original graph. 
\end{proof}

\section{Determining the edge lengths}

In this chapter we will show that we can recover all the edge lengths of a $3$-connected graph if we know the underlying 
combinatorial graph.

The Bloch spectrum only gives us a map from the abstract torus $H^1(G,\R) \slash H^1(G,\Z)$ to the spectra.
By theorem \ref{Bloch_H1} we know the length of the minimal periodic orbit(s) associated to each element in $H_1(G,\Z)$. 
In this section we need a little more, we 
want to associate the lengths we get from the Bloch spectrum with the periodic orbits in the combinatorial graph. 

When we construct a dual graph as in theorem \ref{abstract_dual} we can keep track of the lengths associated to the 
minimal set of generators $\mu_1, \ldots, \mu_n$ of the group spanned by the frequencies. The vertices of the 
dual graph correspond to these frequencies. The vertices of the dual graph then correspond to a set of cycles in the dual of the dual 
that generates the homology. If the graph is $3$-connected the dual of the dual is isomorphic to the original graph so we can associate 
the frequencies and their lengths to the periodic orbits in the graph.

\begin{theorem}
\label{edge_length}
Let $G_0$ be a 3-connected combinatorial graph. Let $G$ be a quantum graph with underlying combinatorial 
graph $G_0$. Then we can recover the length of each edge in $G$ from the Bloch spectrum. That is we can can reconstruct $G$ 
metrically. 
\end{theorem}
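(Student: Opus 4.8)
The strategy is to use the combinatorial graph $G_0$, together with the length data from Theorem~\ref{Bloch_H1}, as a large linear system whose unknowns are the edge lengths. First I would fix, as explained in the paragraph preceding the theorem, an identification of the frequencies $\mu$ with actual periodic orbits in $G_0$: because $G_0$ is $3$-connected, the geometric dual constructed in Theorem~\ref{abstract_dual} has a unique dual (up to isomorphism), namely $G_0$ itself, so the minimal set of generators $\mu_1,\dots,\mu_n$ can be matched with an explicit homology basis of cycles in $G_0$, and by Theorem~\ref{Bloch_H1} we know $l(\mu)$ for \emph{every} element of $H_1(G,\Z)$, i.e. the length of a shortest representative of every integral homology class. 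Each such length $l(\mu)$, when the minimal periodic orbit is a cycle, equals the sum of the lengths $L_e$ of the edges traversed by that cycle; by Lemma~\ref{recognize_cycle} we can tell which frequencies have cyclic minimal orbits, and by Remark~\ref{check_overlap_sign} and Lemma~\ref{decomposition} we can resolve exactly which edges each of these cycles uses. So every cyclic minimal orbit gives one honest linear equation $\sum_{e \in c} L_e = l([c])$ in the unknowns $\{L_e\}_{e \in E(G_0)}$.

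The heart of the matter is then to show that this linear system has full rank, i.e. that the collection of cycles we can access spans enough of the edge space to pin down every individual $L_e$. The key point is $3$-connectedness: I would argue that in a $3$-connected graph every edge $e$ lies on a cycle $c$, and moreover one can choose cycles so that the incidence vectors of the accessible cycles span a subspace of the edge space whose orthogonal complement is trivial — equivalently, that no nonzero assignment of "signed lengths" to edges can be orthogonal (in the edge-space pairing) to all cycles. Since the cycle space of $G_0$ is exactly the orthogonal complement of the cut space, a vector orthogonal to all cycles lies in the cut space; so I must rule out that a nonzero element of the cut space could be consistent with the length data. Here is where I would use that we know the lengths of \emph{all} homology classes, not just basis cycles: given any cycle $c$ and any edge $e \in c$, in a $3$-connected graph there is a second cycle $c'$ with $c \cap c' = \{e\}$ (or more precisely a pair of cycles whose "symmetric difference" isolates $e$), and then combining the equations for $c$, $c'$, and $c \triangle c'$ — all of whose lengths we know, by Theorem~\ref{Bloch_H1} applied to the relevant homology classes, with edge-identifications supplied by Lemma~\ref{decomposition} — solves for $L_e$ alone.

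Concretely the steps are: (1) transport the length function $l$ on $H_1(G,\Z)$ to actual cycles in $G_0$ via the dual-of-dual identification; (2) for each accessible cyclic homology class, use Lemma~\ref{decomposition} and Remark~\ref{check_overlap_sign} to write down the precise edge-incidence vector, producing a linear equation for the $L_e$; (3) prove, using $3$-connectedness, that for every edge $e$ there exist cycles $c_1,c_2$ in $G_0$ with $c_1 \cap c_2$ reducing to $\{e\}$ after removing shared vertices, so that a rational combination of the corresponding three equations yields $L_e$; (4) conclude that all edge lengths, hence $G$ metrically, are determined. The main obstacle is step~(3): showing that $3$-connectedness really does guarantee, for each edge, a pair of cycles meeting exactly in that edge, and handling the bookkeeping (shared \emph{vertices} versus shared \emph{edges}, which Lemma~\ref{decomposition} already teaches us to separate) so that the resulting linear system genuinely isolates each $L_e$ rather than merely some sums. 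I expect one can reduce this to a standard Menger/ear-decomposition argument for $3$-connected graphs, but it must be done carefully because loops and multiple edges are allowed.
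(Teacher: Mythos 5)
Your proposal is correct and, once the linear-algebra packaging is stripped away, step (3) is exactly the paper's argument: by Menger's theorem the $3$-connectedness of $G_0$ yields, for each edge $e$, two cycles $c_1,c_2$ sharing only $e$ and its endpoints, and then $2l(e)=l(c_1)+l(c_2)-l(c_3)$ where $c_3$ is the cycle $c_1\setminus\{e\}\cup(-c_2\setminus\{e\})$, all three lengths being supplied by Theorem~\ref{Bloch_H1} after the dual-of-dual identification. The step you flag as the ``main obstacle'' is resolved in the paper in one line (delete $e$; the resulting graph is still $2$-connected, so two internally disjoint paths between the endpoints of $e$ exist), so no full-rank analysis of the cycle space is needed.
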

\begin{proof}
Given an edge $e$ there are at least 3 disjoint paths that connect its end vertices as $G_0$ is $3$-connected. Thus there are two cycles 
in $G$ that share the edge $e$ and its end vertices but otherwise are disjoint. Denote these two cycles 
by $c_1$ and $c_2$. Denote the closed walk $c_1 \setminus \{e\} \cup (-c_2 \setminus \{e\})$ by $c_3$. Since $c_1$ and $c_2$ are disjoint away 
from $e$ the closed walk $c_3$ is a cycle. Thus the Bloch spectrum determines the lengths 
of $c_1$, $c_2$ and $c_3$ by theorem \ref{Bloch_H1} and we can use them and solve for the length of $e$, 
$2l(e)=l(c_1)+l(c_2)-l(c_3)$.
\end{proof}

\section{Disconnected graphs}

If we do not assume that the quantum graph is connected we get a componentwise version of theorem \ref{Bloch_H1}.

\begin{proposition}
Let $G$ be a quantum graph that may or may not be connected. Then the spectrum of the standard Laplacian $\Delta_0$ determines 
the number of connected components. Denote the connected components by $G_1, \ldots, G_k$. 

Given a generic $\alpha$, see definition \ref{generic}, the part of the Bloch spectrum $Spec_{t\alpha}(G)$ for $t\in [0,\varepsilon)$ 
 determines the groups $H_1(G_i, \Z)$ and the length of the minimal periodic orbit(s) of each element in $H_1(G_i, \Z)$ for 
each component $i=1, \ldots, k$. 
\end{proposition}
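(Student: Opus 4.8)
The plan is to reduce everything to the connected case already handled in Theorem \ref{Bloch_H1} by first separating the spectral data according to connected components. First I would recall that for the standard Laplacian $\Delta_0$, the spectrum of a disconnected graph is simply the union (with multiplicity) of the spectra of the operators on each connected component, since $\Delta_0$ acts blockwise. The eigenvalue $0$ appears with multiplicity equal to the number of connected components (the constants on each component), so the spectrum of $\Delta_0$ determines $k$. Actually, even more is visible: the trace formula of Theorem \ref{trace_formula} applied to $\Delta_0$ gives, in its Fourier-transformed form (Corollary \ref{fourier_trace_formula}), a constant term equal to $\chi(G)=\sum_i \chi(G_i)$ and a $\delta(l)$-term with coefficient $2\mathcal{L}=2\sum_i \mathcal{L}_i$, together with the periodic-orbit contributions; since every periodic orbit lies in a single component, the set of periodic-orbit lengths is the union over components, and the constant $\chi(G)$ together with $\dim H^1$ (read off from the multiplicity of $0$ in $\operatorname{Spec}_0$, as discussed in section seven) lets us bookkeep the components.

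Next I would treat the family $\Delta_{t\alpha}$ for the fixed generic $\alpha$. The key point is that a $1$-form $\alpha$ on a disconnected graph is just a tuple $(\alpha^{(1)},\dots,\alpha^{(k)})$ of $1$-forms on the components, and $\Delta_\alpha$ decomposes accordingly, so $\operatorname{Spec}_{t\alpha}(G)=\bigcup_i \operatorname{Spec}_{t\alpha^{(i)}}(G_i)$. Genericity of $\alpha$ in the full torus $H^1_{dR}(G,\R)/H^1_{dR}(G,\Z)=\prod_i H^1_{dR}(G_i,\R)/H^1_{dR}(G_i,\Z)$ forces each $\alpha^{(i)}$ to be generic on $G_i$, and moreover forces the frequency groups $\Psi(H_1(G_i,\Z))\subset\R$ of the different components to be rationally independent of one another (otherwise the orbit of $\alpha$ would not be dense). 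Therefore, running the argument of Theorem \ref{Bloch_H1} on the combined Fourier-transformed trace formula produces a single finitely generated free abelian subgroup of $\R$ — the group generated by all frequencies — which splits as an internal direct sum of the per-component frequency groups, each isomorphic to $H_1(G_i,\Z)$; and for each frequency we still extract the length of its minimal periodic orbit(s) exactly as before, since the cosine-wave extraction of Lemma \ref{determine_cosine} is insensitive to which component a periodic orbit belongs to.

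The remaining task is to recognize \emph{which} frequencies belong to the same component, i.e. to recover the direct-sum decomposition of the frequency group as coming from the $G_i$. Here I would argue that two frequencies $\mu,\mu'$ come from periodic orbits in the same component if and only if there is some length $l$ at which both $\mu$ and $\mu'$, or more to the point the combined frequency $|\mu\pm\mu'|$, are realized by a single connected periodic orbit — concretely, $\mu$ and $\mu'$ lie in the same component precisely when the frequency $\mu+\mu'$ is realized (appears in some coefficient $\mathcal{A}^l(t)$) and its minimal realizing periodic orbit is connected, which as in Lemma \ref{recognize_cycle} and Remark \ref{check_overlap_sign} can be detected from the lengths $l(\mu),l(\mu'),l(\mu+\mu')$. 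If $\mu$ and $\mu'$ come from different components, no connected periodic orbit can carry the homology class $[p_\mu]+[p_{\mu'}]$, whereas if they come from the same component a connected representative exists. Taking the equivalence relation generated by "same component" on a minimal generating set of frequencies then partitions the generators into $k$ blocks, and the span of each block is the image of $H_1(G_i,\Z)$; restricting the length data $l(\mu)$ to each block gives exactly the conclusion of Theorem \ref{Bloch_H1} for that $G_i$.

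The main obstacle I expect is the last step: making rigorous the claim that "$\mu$ and $\mu'$ lie in the same component $\iff$ some connected periodic orbit realizes a suitable combined class", and in particular ruling out coincidental cancellations in $\mathcal{A}^l(t)$ that could hide the connecting periodic orbit (the phenomenon flagged in the remark after Theorem \ref{Bloch_H1} about non-minimal homologous orbits of equal length whose $\nu$-coefficients cancel). I would handle this by working with \emph{minimal} periodic orbits only, whose $\nu$-coefficients are strictly positive because they backtrack nowhere, so their contributions cannot cancel; the minimal periodic orbit representing $[p_\mu]+[p_{\mu'}]$ is connected exactly when $\mu,\mu'$ share a component, and its frequency therefore genuinely appears at its minimal length. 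The rest of the proof is then a routine transcription of the connected arguments component by component.
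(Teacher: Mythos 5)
Your proposal is correct and follows essentially the same route as the paper: read off $k$ from the multiplicity of the eigenvalue $0$ of $\Delta_0$, note that the trace formula and the frequency-extraction argument of Theorem \ref{Bloch_H1} go through verbatim, and then sort frequencies into components by testing whether $\mu+\mu'$ is itself realized as a frequency (which happens iff some connected periodic orbit represents the combined homology class, i.e.\ iff $\mu$ and $\mu'$ live in the same component). Your extra remarks on cross-component rational independence forced by genericity and on avoiding cancellation by using minimal (backtrack-free, hence positive-coefficient) orbits are consistent with, and slightly more explicit than, the paper's own argument.
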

\begin{proof}
The multiplicity of the eigenvalue $0$ of the standard Laplacian $\Delta_0$ is equal to the number of connected components.

The trace formula in theorem \ref{trace_formula} still holds for disconnected graphs. The two sums over eigenvalues and periodic 
orbits are just unions over the connected components. The total length of the quantum graph is additive and the Euler characteristic 
is well defined for disconnected graphs, too.

Thus we can copy most of the proof of theorem \ref{Bloch_H1} verbatim and read out a set of frequencies from the Bloch spectrum. 
Every frequency we get is associated to a single periodic orbit that belongs to only one of the connected components.
If the sum of two frequencies is a frequency, then these two frequencies belong 
to the same connected component of $G$. If it is not they belong to different connected components.
Thus the set of frequencies (union their negatives and zero) will not form one finitely generated free abelian subgroup of $\R$ that 
is isomorphic to $H_1(G,\Z)$. Instead it will form $k$ disjoint (apart from zero) finitely generated free abelian subgroups 
of $\R$ that are isomorphic to the $H_1(G_i,\Z)$ for $i=1, \ldots, k$. 

We can now assign a length to each frequency the same way as in theorem \ref{Bloch_H1}.
\end{proof}

As all our subsequent theorems are just consequences of theorem \ref{Bloch_H1} they also hold componentwise.

\begin{corollary}
 The theorems \ref{determine_Albanese}, \ref{block_structure}, \ref{planarity}, \ref{abstract_dual} and \ref{edge_length} all hold 
componentwise.
\end{corollary}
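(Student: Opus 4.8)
The plan is to observe that each of the five cited theorems was derived using nothing more than the conclusion of Theorem \ref{Bloch_H1} for a connected graph --- that is, knowledge of $H_1(G,\Z)$ together with the length $l(\mu)$ of the minimal periodic orbit(s) realizing each frequency $\mu$ associated to the generic $\alpha$ --- and that the preceding proposition supplies exactly this package of data, separately, for each connected component $G_i$. So the proof is essentially bookkeeping: isolate the per-component data from the Bloch spectrum, then rerun each of the five proofs verbatim with $G_i$ in place of $G$.

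First I would note that the partition of the frequency set into the subgroups $\Psi(H_1(G_i,\Z))$ is itself readable from the Bloch spectrum by the criterion established in the proof of the preceding proposition: two frequencies lie in the same $H_1(G_i,\Z)$ if and only if their sum is again a frequency. This isolates, for each $i$, the sub-collection of frequencies belonging to $G_i$ together with their lengths $l(\mu)$. Next I would check the one genericity subtlety: genericity of $\alpha$ for $G$ means the ray $t\alpha$ is dense in $H^1_{dR}(G,\R)/H^1_{dR}(G,\Z)$, and since this torus splits as a direct sum of the tori $H^1_{dR}(G_i,\R)/H^1_{dR}(G_i,\Z)$, the projection of the ray to each factor is dense; hence $\alpha$ restricts to a generic form on each $G_i$, so $\Psi$ is two-to-one away from $0$ on each $H_1(G_i,\Z)$ and Lemma \ref{determine_cosine} applies within each component exactly as in the connected case.

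With the per-component data in hand, the componentwise versions of Theorems \ref{determine_Albanese}, \ref{block_structure}, \ref{planarity}, \ref{abstract_dual} and \ref{edge_length} follow by repeating their proofs with $G_i$ in place of $G$. Every step in those arguments --- selecting a basis of cycles among the minimal periodic orbits via Lemma \ref{basis_of_cycles}, computing the Albanese inner product from $l(\mu_i\pm\mu_j)$ via Lemma \ref{inner_product} and Remark \ref{check_overlap_sign}, recognizing cycles via Lemma \ref{recognize_cycle}, testing the triangle inequality among block distances, detecting positive overlap for the planarity test and for the dual construction, and solving $2l(e)=l(c_1)+l(c_2)-l(c_3)$ for individual edge lengths --- involves only frequencies and lengths lying inside a single component, since any periodic orbit, and therefore any frequency, belongs to exactly one $G_i$. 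There is no interaction between distinct components, so there is no genuine obstacle beyond the genericity remark above; that remark is the only point where one must do anything more than transcribe the connected-case proof.
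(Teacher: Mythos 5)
Your proposal is correct and follows essentially the same route as the paper, which simply observes that the preceding proposition supplies the componentwise analogue of Theorem \ref{Bloch_H1} (including the separation of the frequency set into the disjoint subgroups $\Psi(H_1(G_i,\Z))$ via the sum-of-frequencies criterion) and that all five theorems are consequences of that data alone, so their proofs transcribe verbatim with $G_i$ in place of $G$. Your added check that a generic $\alpha$ on $G$ projects to a generic form on each component is a worthwhile detail the paper leaves implicit, but it does not change the argument.
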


\section*{References}

\bibliographystyle{amsalpha}
 \appendix
\addcontentsline{toc}{section}{Literature}
\bibliography{literatur}

\end{document}